\definecolor{orange}{rgb}{1,0.5,0}
\DeclareMathAlphabet{\mathpzc}{OT1}{pzc}{L}{it} %stylizowany
\newtheorem{theorem}{Theorem}[section] % 1st argument is your name for it
\newtheorem{lemma}[theorem]{Lemma}     % 2nd argument is what is printed
\newtheorem{corollary}[theorem]{Corollary}
\numberwithin{equation}{section}
\def\R{\mathrm{Re\,}}
\def\geq{\geqslant}
\def\leq{\leqslant}
\def\R{\mathbb{R}}
\def\T{\mathbb{T}}
\def\Z{\mathbb{Z}}
\def\N{\mathbb{N}}
\def\Q{\mathbb{Q}}
\def\cB{\mathcal{B}}
\def\cC{\mathcal{C}}
\newcommand{\bea}{\begin{eqnarray}}
  \newcommand{\eea}{\end{eqnarray}}
  \newcommand{\beab}{\begin{eqnarray*}}
  \newcommand{\eeab}{\end{eqnarray*}}
  \newcommand{\be}{\begin{equation}}
  \newcommand{\ee}{\end{equation}}
\newcommand{\ep}{\varepsilon}
\newcommand{\de}{\delta}
\newcommand{\g}{\gamma}
\newcommand{\al}{\alpha}
\newcommand{\la}{\lambda}
\title{Spectral disjointness of rescalings of some surface flows}
\author{P.\ Berk, A.\ Kanigowski}
\begin{document}
\baselineskip=14pt \maketitle

\begin{abstract} We study self-similarity problem for two classes of flows: 
\begin{enumerate}
\item special flows over circle rotations and under roof functions with symmetric logarithmic singularities 
\item special flows over interval exchange transformations and under roof functions which are of two types
\begin{itemize}
\item  piecewise constant with one additional discontinuity which is not a discontinuity of the IET;
\item piecewise linear over exchanged intervals with non-zero slope.
\end{itemize} 
\end{enumerate} 
We show that if
$\{T^{\alpha,f}_t\}_{t\in\R}$ is as in (1) then for a full measure set of rotations, and for every $K,L\in \N$, $K\neq L$, we have that $\{T^{\al,f}_{Kt}\}_{t\in\R}$ and $\{T^{\al,f}_{Lt}\}_{t\in\R}$ are spectrally disjoint. Similarly, if $\{T^f_t\}_{t\in\R}$ is as in (2), then for a full measure set of IET's, a.e. position of the additional discontinuity (of $f$, in piecewise constant case) and every $K,L\in \N$, $K\neq L$ the flows $\{T^f_{Kt}\}_{t\in\R}$ and $\{T^f_{Lt}\}_{t\in\R}$ are spectrally disjoint.
\end{abstract}

\tableofcontents

\section{Introduction}
The subject of this paper revolves around the {\em self-similarity} problem for measure preserving flows acting on probability standard Borel spaces. Recall that a flow $\{T_t\}_{t\in\R}$, where $T_t:(X,\cB,\mu)\to (X,\cB,\mu)$ is self-similar at {\em scale} $s\in \R$, if there exists an automorphism $S\in Aut(X,\cB,\mu)$ such that 
$$
T_t\circ S=S\circ T_{st},\;\;\text{ for every }\; t\in \R.
$$
More generally, one can try to describe the set of {\em joinings} between $\{T_t\}_{t\in\R}$ and $\{T_{st}\}_{t\in\R}$ (see Section \ref{sec:bac} for the definition of joinings). Self-similarities of flows were studied extensively by Danilenko and Rhyzhikov in \cite{DaRy}. Among many other results, they proved that the property of a flow being disjoint with all its {\em rescalings} is generic in the space of all measure preserving flows. Their result was  heavily influenced by the work of del Junco in \cite{dJun} on similar properties, but for automorphisms. We would also like to refer the reader to the papers \cite{FrLem3}, \cite{FrKLem3} and \cite{Kul} where the authors describe the set of self-similarities for various classes of flows. Some of the results in this paper base on the constructions given in \cite{BFr} where authors were focusing on relation of some special flows with their inverses, that is rescaling by $-1$. 

One more motivation for studying joinings rather then just self-similarity of different rescalings of a given flow $\{T_t\}_{t\in\R}$ has its justification in the so called Katai orthogonality criterion \cite{BSZ}, which has recently been proven to be an important tool for studying problems around Sarnak's conjecture on M{\"o}bius disjointness (see e.g.\ \cite{FKL}). From that point of view it is important to study disjointness (in the sense of Furstenberg) of $\{T_t\}_{t\in\R}$ and $\{T_{qt}\}$ for $q\in \Q$. A property that is stronger than disjointness is that of {\em spectral disjointness} (see Section \ref{sec:bac} for the definition) which will be the main focus of this paper. 

In this paper we are interested in spectral disjointness of rational rescalings of certain special flows coming from dynamics on surfaces. More precisely we will consider two types of special flows: 
\begin{enumerate}
\item special flows over irrational rotations on the circle and under the roof function $f$ with one {\em symmetric logarithmic singularity} (see Section \ref{sec:logsym}). We will denote such flows by $\{T_t^{\alpha,f}\}_{t\in\R}$;
\item special flows over interval exchange transformation  $T$ (IET for short) and under the roof function which is piecewise linear, we will denote such flows by $\{T^{f}_t\}_{t\in\R}$. 
\end{enumerate}
It has been shown in \cite{FrLem2} and more thoroughly in \cite{FrLem4}, that flows as in (1) are models of some area preserving flows on surfaces of Hamiltonian origin. Moreover,  flows as in (2) arise naturally  as special representations of some translation flows on surfaces (or their reparametrizations).

Notice that $\{T_t\}_{t\in\R}$ and $\displaystyle{\{T_{\frac{K}{L}t}\}_{t\in\R}}$ are spectrally disjoint if and only if $\{T_{Kt}\}_{t\in\R}$ and $\{T_{Lt}\}_{t\in\R}$ are spectrally disjoint. In the two theorems below, a.e. stands for almost every with respect to Lebesgue measure. Our two main results are:

\begin{theorem}\label{thm:1}
	Let $f:\mathbb T\to\R_+$ be given by
	\[
	f(x)=-C_f\log x-C_f\log (1-x)+g(x),
	\]
	where $C_f>0$ and $g\in C^3(\T)$, $g>0$. Then for a.e. $\alpha\in \T$ and every $K,L\in \N$, $K\neq L$, the flows $\{T^{\alpha,f}_{Kt}\}_{t\in\R}$ and $\{T^{\alpha,f}_{Lt}\}_{t\in\R}$ are spectrally disjoint.
\end{theorem}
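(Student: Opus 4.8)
The plan is to establish spectral disjointness via the standard route of proving that any ergodic self-joining of $\{T^{\alpha,f}_{Kt}\}$ and $\{T^{\alpha,f}_{Lt}\}$ must arise from a common spectral factor, and then to rule out such a factor by exploiting the weak-mixing / rigidity structure of special flows over rotations with a symmetric logarithmic singularity. Concretely, spectral disjointness of two flows is equivalent to the statement that the maximal spectral types of the two Koopman representations (restricted to the orthocomplement of the constants) are mutually singular measures on $\R$. So the real task is to show that the maximal spectral type $\sigma_K$ of $\{T^{\alpha,f}_{Kt}\}$ and $\sigma_L$ of $\{T^{\alpha,f}_{Lt}\}$ are singular for a.e.\ $\alpha$ whenever $K\neq L$. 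Note $\sigma_L$ is the pushforward of $\sigma_1$ under $x\mapsto x/L$ and $\sigma_K$ the pushforward under $x\mapsto x/K$, so the claim is $\sigma_1 \perp (\tfrac{L}{K})_*\sigma_1$, i.e.\ $\sigma_1$ is singular with respect to its own dilation by the rational $L/K \neq 1$.

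The first step is to use the known fine structure of the spectrum of $\{T^{\alpha,f}_t\}$ for these singular roof functions. For a.e.\ $\alpha$ (a full-measure Diophantine-type condition obtained by taking $\alpha$ with well-chosen continued fraction behaviour, along a sparse subsequence $q_{n_k}$ of denominators), these flows are weakly mixing and, crucially, admit strong rigidity: there is a sequence $t_k\to\infty$ along which $T^{\alpha,f}_{t_k}\to \mathrm{Id}$ in the weak (or even strong) operator topology, with $t_k$ governed by the Birkhoff sums of $f$ over the orbit segments of length $q_{n_k}$. The point is that because of the symmetric logarithmic singularity the Birkhoff sums $f^{(q_{n_k})}$ concentrate (after the cancellation coming from symmetry, as exploited in \cite{BFr}) around a single value $c_k$, so $t_k \approx c_k$ is a rigidity time for $\{T^{\alpha,f}_t\}$. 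This means $\sigma_1$ is concentrated, up to small error, on the set where $e^{2\pi i t_k x}\approx 1$, i.e.\ near $\tfrac{1}{t_k}\Z$. I would then show — and this is the heart of the argument — that along the \emph{same} sequence, $\{T^{\alpha,f}_{(L/K)t_k}\}$ does \emph{not} converge to the identity: the rescaled rigidity time $(L/K)t_k$ fails to be a rigidity time for $\{T^{\alpha,f}_t\}$ because $(L/K)c_k$ is bounded away, mod the relevant "return" scale, from an integer multiple of the concentration value, using $K\neq L$ and an equidistribution statement for the fractional parts of $(L/K)c_k$ (this is where the "a.e.\ $\alpha$" is spent: one arranges $c_k = f^{(q_{n_k})}$ to have well-distributed fractional parts after multiplication by $L/K$). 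Concluding: $\sigma_K$ gives full mass to a shrinking neighbourhood of $\tfrac{K}{t_k}\Z$ while $\sigma_L$ gives full mass to a shrinking neighbourhood of $\tfrac{L}{t_k}\Z$, and for $K\neq L$ these neighbourhoods become asymptotically disjoint away from $0$; a Borel–Cantelli / limiting argument then yields $\sigma_K\perp\sigma_L$.

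A cleaner packaging of the last step, which I would actually use, is the following spectral criterion: if there is a sequence $t_k\to\infty$ and a Borel set $A\subset\R$ with $\sigma_K(A)\to 1$, $\sigma_L(A)\to 0$ (or two sequences of test functions whose images under the two Koopman operators are asymptotically orthogonal in a way that witnesses singularity), then $\sigma_K\perp\sigma_L$. One builds $A$ from the rigidity data above: $A_k = \{x : |1 - e^{2\pi i t_k x}| < \eps_k\}$ with $\eps_k\to 0$ slowly; rigidity of $\{T^{\alpha,f}_{Kt}\}$ at time $t_k/K$ forces $\sigma_K(A_k)\to 1$, while the non-rigidity (in fact, quantitative "partial rigidity to a nontrivial limit" or ergodicity-type lower bound) of $\{T^{\alpha,f}_{Lt}\}$ at the same times forces $\limsup \sigma_L(A_k) < 1$, and iterating/intersecting along a subsequence pushes this to $0$.

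The main obstacle I anticipate is the second half of Step 2: proving that the rescaled time $(L/K)t_k$ is genuinely \emph{not} a rigidity time, i.e.\ lower-bounding the "defect" $\|T^{\alpha,f}_{(L/K)t_k} - \mathrm{Id}\|$ away from $0$. Rigidity statements are soft (one only needs Birkhoff sums to concentrate somewhere), but non-rigidity at a prescribed time requires controlling the \emph{spread} of the Birkhoff sums $f^{(q_{n_k})}(x)$ over $x$ and showing that after multiplication by $L/K$ this spread, measured modulo the natural lattice, stays bounded below — equivalently, that $(L/K)f^{(q_{n_k})}$ is not itself concentrated. This is a delicate Diophantine/arithmetic input about the interaction between the rotation number $\alpha$, its convergents $q_{n_k}$, and the rational $L/K$, and it is exactly here that one carefully selects the full-measure set of $\alpha$'s (e.g.\ imposing that $q_{n_k}$ avoids certain congruence/approximation conditions relative to $K$ and $L$) and invokes the constructions from \cite{BFr}. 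The rest — the spectral-singularity bookkeeping, weak mixing, and the full-measure continued-fraction conditions — is routine given the machinery cited in the paper.
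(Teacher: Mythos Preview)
Your rigidity-based strategy has a genuine and fatal gap. If $t_k$ is a rigidity time for the original flow, i.e.\ $T_{t_k}\to\mathrm{Id}$, then automatically $T_{mt_k}=(T_{t_k})^m\to\mathrm{Id}$ for every fixed integer $m$. Hence whenever $K\mid L$ (in particular for $K=1$, $L$ arbitrary) the time $(L/K)t_k$ \emph{is} a rigidity time, and both rescaled flows are rigid along the same sequence; your asymmetry disappears. Even when $L/K\notin\Z$, you never identify the weak limit of $T_{(L/K)t_k}$: to invoke any spectral-disjointness criterion of the type used here one needs this limit to exist as an integral operator $\int T_{-t}\,dQ(t)$ with $Q$ having analytic Fourier transform, and then to check $Q\neq\delta_0$. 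You only assert ``non-rigidity'', which gives at best $\limsup\sigma_L(A_k)<1$; the step ``iterating/intersecting along a subsequence pushes this to $0$'' is not valid, since intersecting sets of $\sigma_K$-measure tending to $1$ need not preserve that property.

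The paper confronts exactly this obstruction and takes the opposite route: it \emph{avoids} very rigid sequences. Along a sequence $(q_{n_k})$ with $a_{n_k+1}$ large the Birkhoff sums $S_{q_{n_k}}(f)-c_k$ concentrate at $0$, so both limit measures $P_K,P_L$ collapse to $\delta_0$ and cannot be distinguished (the paper states this explicitly). Instead one chooses a \emph{balanced} Diophantine condition, $100(K^2+L^2)<a_{n_k+1}<200(K^2+L^2)$ together with sub-exponential control on earlier partial quotients, under which $(S_{q_{n_k}}(f)-c_k)_\ast\lambda$ has genuinely spread-out limits with two-sided exponential tails: $D^{-1}e^{-b}\le\lambda(|S_{q_{n_k}}(f)-c_k|\ge b)\le De^{-b}$. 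A separation lemma (orbits under $R_\alpha^{iq_{n_k}}$, $|i|\le L$, do not simultaneously see large deviations) then forces the tail of $Res_w(P_w)$ to decay like $e^{-wb}$ up to constants, so $Res_K(P_K)\neq Res_L(P_L)$ for large $b$, and the analytic-Fourier-transform criterion yields spectral disjointness. The mechanism is the \emph{tail exponent} of a spread-out limit measure, not rigidity versus non-rigidity.
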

In a recent paper \cite{KLU} the authors showed an analogous result (for disjointness in the sense of Furstenberg) for roof functions with {asymetric} logarithmic singularity. It is worth to mention however that their result relies heavily on the so called Ratner's property, while we present completely different approach.

We refer the reader to Section \ref{roie} for any yet undefined terms in the theorem below.
\begin{theorem}\label{Main}
Let $\mathfrak R$ be a Rauzy graph of irreducible permutations of alphabet $\mathcal A$ and consider $\mathfrak R\times \Lambda^{\mathcal A}$ with a product of counting and Lebesgue measure. Then there exists a set $\Upsilon\subset \mathfrak R\times \Lambda^{\mathcal A}$ of full measure such that for every interval exchange transformation $(\pi,\la)\in\Upsilon$ we have that
\begin{enumerate} 
	\item[(i)] for almost every $\beta\in[0,1)$ and piecewise constant positive function $f$ with jump in $\beta$ and optionally in the discontinuities of $T:=T_{\pi,\la}$,
	\item[(ii)] for a piecewise linear function with non-zero constant slope, linear over exchanged intervals, 
\end{enumerate}	
	the special flow $\{T^f_t\}_{t\in\R}$ satisfies 
\[
\{T^f_{Kt}\}_{t\in\R}\ \text{ and }\ \{T^f_{Lt}\}_{t\in\R}\ \text{ are spectrally disjoint for every distinct numbers }\ K,L\in\N.
\]
	\end{theorem}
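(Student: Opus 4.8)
The plan is to reduce spectral disjointness of the rescalings $\{T^f_{Kt}\}$ and $\{T^f_{Lt}\}$ to a statement about weak limits of off-diagonal self-joinings, and ultimately to a rigidity-type phenomenon governed by the continued fraction (Rauzy--Veech) expansion of the IET. Concretely, I would argue by contradiction: if the two rescaled flows fail to be spectrally disjoint, then there is a nonzero finite measure on $\T\setminus\{0\}$ that is simultaneously in the spectral type of $\{T^f_{Kt}\}$ (computed with respect to $K$-rescaled time) and of $\{T^f_{Lt}\}$. Equivalently, there is a nontrivial joining $\rho$ of $\{T^f_{Kt}\}$ with $\{T^f_{Lt}\}$ whose associated Markov operator is not a combination of graph joinings, and in particular $\rho$ is a non-product joining on which the off-diagonal flow acts with some nontrivial spectral overlap. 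The goal is then to show that the only way such a joining can exist is if $K=L$.

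\smallskip

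The key tool, following the constructions in \cite{BFr}, is to produce along a suitable sequence of Rauzy--Veech induction times $n_k$ a family of \emph{rigidity-type} partial limits: using the self-similar (renormalization) structure, one builds towers over the induced IET whose bases have measure tending to a full set, and on which the Birkhoff sums of $f$ (resp.\ of the ceiling for the special flow) are well controlled. The crucial quantitative input is that the deviation of these Birkhoff sums, after subtracting the expected linear growth, converges — after appropriate normalization by the return time $S_{n_k}(x)$ — to a limiting ``cocycle'' whose distribution can be computed from the Rauzy--Veech data; in the piecewise linear case this is essentially the classical computation of the Kontsevich--Zorich cocycle / the boundary operator, and in the piecewise constant case with an extra discontinuity at $\beta$ it is the analogous computation tracking the orbit of $\beta$. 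One extracts from this a weak limit of the time-$S_{n_k}$ maps of $\{T^f_t\}$ of the form $\int_{\T} T^f_t \, dP(t)$ for some probability measure $P$ on a neighborhood of $0$ whose support (in the generic case) is not a single atom; this is the ``non-rigidity of the limit'' encoding the logarithmic/linear wildness of the flow.

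\smallskip

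With such limits in hand for both rescalings, the argument is: a joining $\rho$ of $\{T^f_{Kt}\}$ and $\{T^f_{Lt}\}$ must intertwine the limit operators, so that $\int T^f_{Kt}\,dP_1(t)$ on the first coordinate corresponds under $\rho$ to $\int T^f_{Lt}\,dP_2(t)$ on the second, where $P_1, P_2$ arise from the \emph{same} sequence $n_k$ of induction times (this is the point: the renormalization is intrinsic to $T$, not to the rescaling, so both rescaled flows see limits supported on rescalings of the same set of ``quasi-periods'' $S_{n_k}(x)$). Comparing the two — i.e.\ the fact that the first flow picks up time increments $\sim S_{n_k}$ while the second picks up the same increments, but the \emph{spectral} identification forces these to match up to the ratio $K/L$ — yields that the limiting measures $P_1$ and $P_2$ must be related by the scaling $t\mapsto (L/K)t$ while also being equal (being limits of the identity along the same times), forcing $K=L$. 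Formally this is carried out by showing that the Markov intertwiner must map the $L^2$-spectral projections associated with the limit operators onto each other, and then using that the limit operator of the $K$-flow has a spectral gap / spectral value structure incompatible with that of the $L$-flow unless $K=L$; this is where the hypothesis $g\in C^3$ (resp.\ the non-zero slope, resp.\ $\beta$ generic) is used to guarantee the limit measure is genuinely spread out and not a point mass.

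\smallskip

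The main obstacle I expect is the construction and control of the weak limits — specifically, proving that for a \emph{full measure} set of $(\pi,\lambda)$ (and a.e.\ $\beta$) one can choose the induction times $n_k$ so that simultaneously (a) the tower bases fill the space, (b) the normalized Birkhoff-sum deviations converge to a nondegenerate limit, and (c) this works uniformly enough to be transferred through an arbitrary joining of the two rescalings. Parts (a)--(b) are essentially a Borel--Cantelli / Diophantine argument over the Rauzy--Veech renormalization cocycle (using its ergodicity and the distribution of the continued-fraction-type coefficients), but combining them to survive the passage through a joining — i.e.\ ensuring the limit operators on the two sides are literally comparable rather than merely each individually nontrivial — requires care, and is the technical heart of the paper. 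The piecewise constant case with the auxiliary discontinuity $\beta$ adds the extra subtlety that the relevant cocycle is not the Zorich cocycle itself but a skew-product extension tracking $\beta$, whose ergodicity along the chosen times must be established separately.
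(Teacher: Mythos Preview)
Your proposal has the right skeleton --- weak limits of the flow along well-chosen Rauzy--Veech times, acting as integral operators $\int T^f_{-t}\,dP(t)$ --- but the logical spine of the argument is off in a way that would not close.

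The paper does \emph{not} argue by contradiction through an intertwining joining, and in particular the step where you say ``the limiting measures $P_1$ and $P_2$ must be related by the scaling $t\mapsto (L/K)t$ while also being equal'' is not how the comparison works. What actually happens is this: one produces a single sequence $a_n$ such that $T^f_{Ka_n}\to P_K(\mathcal T^f)$ and $T^f_{La_n}\to P_L(\mathcal T^f)$ weakly, with $P_K,P_L\in\mathcal P(\R)$ having \emph{exponential decay}. If the two rescaled flows shared any spectral mass, then on a common non-atomic spectral component the two limits would coincide as multiplication operators, i.e.\ $\widehat{Res_K(P_K)}=\widehat{Res_L(P_L)}$ on a set of positive measure; exponential decay makes these Fourier transforms analytic, so equality would be forced everywhere and hence $Res_K(P_K)=Res_L(P_L)$. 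Spectral disjointness therefore reduces to the inequality $Res_K(P_K)\neq Res_L(P_L)$. Your sketch lacks both the exponential-decay hypothesis and the analyticity step, and without them the argument that ``the limits must agree, hence $K=L$'' does not go through --- equality on a set of positive measure does not propagate for general $P$.

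The second, more serious, gap is that the limit measures are not merely ``spread out'' or determined by the Kontsevich--Zorich cocycle in any soft way: they are computed \emph{explicitly}, and this explicit form is what distinguishes $K$ from $L$. The paper engineers, via recurrence of the normalized Rauzy--Veech map, a sequence of induction times at which the IET looks like a rotation with a prescribed (large, $K$-dependent) partial quotient; this is done by forcing returns to a hand-built open set $Y^K_n\subset\Lambda^{\mathcal A}$ satisfying carefully tuned inequalities on the length vector. From the resulting tower picture one reads off $P_K$ directly: in the piecewise constant case it is purely atomic with exactly $K+1$ atoms (so $Res_K(P_K)$ and $Res_L(P_L)$ have different numbers of atoms), and in the piecewise linear case it is absolutely continuous with a piecewise linear density whose maximum value is attained at exactly $K-1$ points. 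Your Borel--Cantelli / KZ-cocycle picture does not give this level of control, and in particular gives no mechanism to certify $Res_K(P_K)\neq Res_L(P_L)$.
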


The main tool for proving Theorems \ref{thm:1} and \ref{Main} is a criterion on spectral disjointness (see Section \ref{sec:dis}) which is based on similar criterion in \cite{ALR} and \cite{FrLem2}. We show how proving both Theorems falls down to differentiation of some limit probability measures (see Corollary \ref{cor:maintool}). In the case of Theorem \ref{thm:1} we use for this purpose the measure of properly picked rays, while in the case of { Theorem} \ref{Main} we provide a precise description of the limit measures.

The problem of self-similarity in both cases has been studied in literature. Namely in \cite{Kul} Kułaga-Przymus showed that for the special flows over IETs of bounded type and under roof functions with logarithmic singularities of symmetric type are not self-similar. On the other hand in \cite{FrLem3} Frączek and Lemańczyk showed that for almost every IET and a roof function with bounded variation the special flow is not isomorphic with all its rescalings (except -1).

\textbf{Outline of the paper:} The paper is organized as follows: in Section \ref{sec:bac} we introduce some basic definitions and notation. In Section \ref{sec:dis} we state the main criterion for spectral disjointness (Theorem \ref{krytspek}), which is then adjusted to deal with special flows (Theorem \ref{main} in Subsection \ref{sec:spec}). Finally in Corollary \ref{cor:maintool} 
we state sufficient condition for spectral disjointness of rescalings of a special flow. Then in Section \ref{sec:logsym} we prove Theorem \ref{thm:1} and in Section \ref{sec:pico} we prove Theorem \ref{Main}.

\textbf{Acknowledgements:} The research leading to these results was partially supported by the European Research Council under the European Union Seventh Framework Programme (FP/2007-2013) / ERC Grant Agreement n. 335989 and Narodowe Centrum Nauki Grant OPUS 14 2017/27/B/ST1/00078. The authors would like to thank K.\ Fr\k{a}czek and C.
 Ulcigrai for several discussions on the subject and the anonymous reviewer of this article for careful reading and detailed remarks.

\section{Basic definitions}\label{sec:bac}
In this section we recall some basic definitions and introduce notation that will be used throughout the paper. 
\subsection{Joinings, spectral theory}
Let $\mathcal T=\{T_t\}_{t\in\R}$ and $\mathcal S=\{S_t\}_{t\in\R}$ be two  measure preserving flows acting respectively on standard { Borel} probability spaces $(X,\mathcal B,\mu)$ and $(Y,\cC,\nu)$. A {\em joining} $\rho$ between $\mathcal T$ and $\mathcal S$ is a $\mathcal T\times \mathcal S$ invariant probability measure such that $\rho(B\times Y)=\mu(B)$ and $\rho(X\times C)=\nu(C)$, for all $B\in \cB$ and $C\in \cC$. 
The set of joinings is denoted by $J(\mathcal T,\mathcal S)$. Two flows are {\em disjoint} in the sense of Furstenberg if  $J(\mathcal T,\mathcal S)=\{\mu\otimes \nu\}$. We denote by $J_2(\mathcal T):=J(\mathcal T,\mathcal T)$ the set of {\em self-joinings} of $\mathcal T$. In particular, for $t\in \R$, an {\em off-diagonal} self joining $\mu_t\in J_2(\mathcal T)$ is defined, by
$$
\mu_t(A\times B)=\mu(A\cap T_{-t}B).
$$

We now recall basic definitions from spectral theory. Consider a natural group of unitary operators on $L^2(X,\mathcal B,\mu)$ given by
	$f\mapsto f\circ T_t$. These are called \emph{Koopman operators} and we denote them  by $\{U_t^{\mathcal T}\}_{t\in\R}$. A subspace $\R^{\mathcal T}_f=\overline{\operatorname{span}}\{f\circ T_t;\, t\in\R \}$ is a \emph{cyclic space} of element $f\in L^2(X,\mathcal B,\mu)$. Due to Bochner-Herglotz's theorem there exists a finite measure $\sigma_f$ on $\mathbb R$ such that
	\[
	\hat\sigma_f(t)=\langle f\circ T_{-t},f\rangle\ \text{ for every }\ t\in\R.
	\]
	We say that $\sigma_f$ is the \emph{spectral measure} of $f$. A\emph{ maximal spectral type }of $\mathcal T$ is a spectral measure $\sigma_{\mathcal T}$ such that for every $f\in L^2(X,\mathcal B,\mu)$ we have $\sigma_f\ll \sigma_{\mathcal T}$. We say that the flows $\mathcal T$ and $\mathcal S$ are \emph{spectrally disjoint} if $\sigma_{\mathcal T}\perp\sigma_{\mathcal S}$. Define the linear operator $V_t^\sigma$ on $L^2(\R,\mathcal B(\R),\sigma)$ such that
	\[
	V_t^\sigma(f)=e^{it(\cdot)}f\text{ for every }f\in L^2(\R,\mathcal B(\R),\sigma).
	\] 
	\begin{remark}\label{specisom} For every $f\in L^2(X,\mathcal B,\mu)$ the linear operators $U_t^{\mathcal T}$ on $\R^{\mathcal T}_f$ and $V_t^{\sigma_f}$ on $L^2(\R,\mathcal B(\R),\sigma_f)$ are isomorphic and the isomorphism is induced by the map $f\circ T_t\mapsto e^{it}$. We denote this induced isomorphism by $\phi_\mathcal T:\R^{\mathcal T}_f\to L^2(\R,\mathcal B(\R),\sigma_f)$.
		\end{remark}
	
	Recall, that spectral disjointness of $\mathcal T$ and $\mathcal S$ implies \emph{disjointness} in the sense of Furstenberg.	
	Let $P\in\mathcal P(\R)$\footnote{For given standard Borel space $(X,\mathcal B)$ we denote by $\mathcal P(X)$ the set of probability measures on $X$.}. We define an \emph{integral operator} $P(\mathcal T)$ in the following way:
	\[
	\text{for every }f,g\in L^2(X,\mathcal B,\mu),\text{ we have }\langle P(\mathcal T)f,g\rangle=\int_\R\langle f\circ T_{-t},g\rangle\,dP(t).
	\]

%	
%	It is worth to mention that there exists a direct relation between convergence of Koopman operators to an integral operator with the convergence of 2-off-diagonal joinings to an integral joining. Namely
%	\begin{equation}\label{opimi}
%		T_{t_n}\to P(\mathcal T)\ \text{ weakly if and only if }\ \mu_{t_n}\to\int_\R\mu_{-t}\,dP(t)\text{ in }J_2(\mathcal T).
%		\end{equation}

\subsection{Rotations and IET's}\label{roie}
\paragraph{\textbf{Irrational rotations}}
We recall some known facts on diophantine aproximation. For $\alpha\in \T\setminus \Q$ let $[0,a_1,a_2,\ldots]$ denote the continued fraction expansion of $\alpha$ and let $(p_n)_{n\in \N}$ and $(q_n)_{n\in \N}$ be  sequences of numerators and denominators for $\alpha$, i.e. $p_0=q_{-1}=1$, $p_{-1}=q_0=0$ and 
$$
p_{n+1}=a_{n+1}p_n+p_{n-1}\;\;\text{ and }\;\;q_{n+1}=a_{n+1}q_n+q_{n-1}.
$$
Then 
\be\label{eq:q_n}
\frac{1}{2q_nq_{n+1}}\leq \left\|\alpha-\frac{p_n}{q_n}\right\|\leq \frac{1}{q_nq_{n+1}},
\ee
where
\[
\|\beta\|:=\min\{\beta\!\mod 1,\,1-\beta\!\mod 1 \}\ \text{ for }\ \beta\in\R.
\]
For $n\in \N$ consider the partition $\mathcal{P}_n$ of $\T$ by the points $\{-i\alpha\}_{i=0}^{q_n-1}$. For $I\in \mathcal{P}_n$, we have 
\be\label{eq:lenn}
\frac{1}{q_{n}}-\frac{2}{q_{n+1}}\leq |I|\leq \frac{1}{q_n}+\frac{2}{q_{n+1}}.
\ee
\bigskip
\paragraph{\textbf{Interval exchange transformations.}}
Let now $\mathcal A$ be an alphabet of $d$ elements. We understand by permutation $\pi$ a pair of bijections $\pi_i:\mathcal A\to\{1,\ldots,d\}$ for $i=0,1$. { Let 
\[
\Lambda^{\mathcal A}:=\{\la\in\R^{\mathcal A}_{>0};\,\sum_{a\in\mathcal A}\la_a=~1~\}
\]
 be the $(d-1)$-dimensional unit simplex.} For every $a\in\mathcal A$ an $\la\in\Lambda^{\mathcal A}$ we denote
 \[
 I_a=\Big[\sum_{\{b;\,\pi_0(b)<\pi_0(a)\}}\la_b,\sum_{\{b;\,\pi_0(b)\le\pi_0(a)\}}\la_b\Big).
 \] 
  We say that $T_{\pi,\la}:[0,1)\to[0,1)$ is an \emph{interval exchange transformation}(or shortly \emph{IET}) if it rearranges intervals $\{I_a\}_{a\in\mathcal A}$ given by $\la$ according to the permutation $\pi$ by translations. Obviously this definition can be shifted to any interval $J\subset\R$ of arbitrary length via rescaling.
  
   Throughout this paper we only consider permutations $\pi=(\pi_0,\pi_1)$ which are \emph{irreducible} that is
\[
\pi_0\circ\pi_1^{-1}(\{1,\ldots,k\})=\{1,\ldots,k\}\quad\Rightarrow\quad k=d.
\] 
We use the standard notation $S_0^{\mathcal A}$ for the set of all irreducible permutations of alphabet~$\mathcal A$. We have the following theorem.
\begin{theorem}[\cite{Veech}]
	For almost every $(\pi,\la)\in S_0^{\mathcal A}\times\Lambda^{\mathcal A}$, with respect to product of counting and Lebesgue measure, $T_{\pi,\la}$ is ergodic.
	\end{theorem}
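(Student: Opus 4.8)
The statement to be proved is Veech's theorem: for almost every $(\pi,\la)\in S_0^{\mathcal A}\times\Lambda^{\mathcal A}$, with respect to the product of counting measure on $S_0^{\mathcal A}$ and Lebesgue measure on $\Lambda^{\mathcal A}$, the interval exchange transformation $T_{\pi,\la}$ is ergodic. The plan is to deduce ergodicity from unique ergodicity of \emph{almost every} IET, which in turn follows from Masur's and Veech's criterion via the Rauzy--Veech renormalization. First I would recall that an IET $T_{\pi,\la}$ with irreducible $\pi$ satisfying the Keane condition (no connection between discontinuity points) is minimal; since the Keane condition fails only on a measure-zero set of $\la$ (it requires a nontrivial linear relation among the $\la_a$ with integer coefficients), for a.e.\ $(\pi,\la)$ the map $T_{\pi,\la}$ is minimal, and in particular has no nontrivial invariant intervals.

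Next I would introduce the Rauzy--Veech induction map $\mathcal{R}$ acting on $S_0^{\mathcal A}\times\Lambda^{\mathcal A}$: given $(\pi,\la)$ satisfying the Keane condition, comparing $\la_{\pi_0^{-1}(d)}$ with $\la_{\pi_1^{-1}(d)}$ determines a ``winner'' and a ``loser'' letter, and $T_{\pi,\la}$ induced on the interval $[0,1)$ with the loser's contribution removed is again an IET $T_{\mathcal{R}(\pi,\la)}$ on a shorter interval; rescaling to unit length gives a map on the same space. Iterating produces a sequence of nonnegative integer matrices $A^{(n)}$ (the Rauzy--Veech cocycle) with $\la^{(n)} = (A^{(1)}\cdots A^{(n)})^{-1}\la$ (up to normalization), and the nested Rokhlin towers over the induced intervals give a Rokhlin-tower representation of $T_{\pi,\la}$ whose column heights are the row sums of $A^{(1)}\cdots A^{(n)}$. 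The key combinatorial fact, due to Veech, is that for a.e.\ $(\pi,\la)$ the orbit under $\mathcal{R}$ is infinite and the product matrices eventually become strictly positive infinitely often --- equivalently, the renormalization is ``well-approximable'' --- which makes the intersection of the nested column ranges trivial in the appropriate sense.

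The heart of the argument is then the unique ergodicity criterion: if $f$ is a nonnegative $T_{\pi,\la}$-invariant function (indicator of an invariant set), its integrals over the levels of the Rokhlin towers at stage $n$ are governed by the measures induced on $I^{(n)}$; positivity of the cocycle matrices along a subsequence forces these conditional measures to equidistribute, so $f$ is constant a.e. Hence a.e.\ $T_{\pi,\la}$ is uniquely ergodic with respect to Lebesgue measure, and uniquely ergodic transformations preserving Lebesgue measure are in particular ergodic. To make the ``a.e.'' bookkeeping precise I would invoke that $\mathcal{R}$ (after the natural acceleration to the Zorich map, or directly) admits an ergodic invariant measure equivalent to Lebesgue on each Rauzy class $\times$ simplex, so that the positivity-along-a-subsequence event, having positive measure and being $\mathcal{R}$-invariant modulo the tail, has full measure; summing over the finitely many Rauzy classes (finitely many components of $S_0^{\mathcal A}$) then gives the full-measure conclusion on $S_0^{\mathcal A}\times\Lambda^{\mathcal A}$.

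The main obstacle is establishing the positivity of the Rauzy--Veech cocycle along a subsequence for a.e.\ parameter --- this is the genuinely hard input and is exactly where Veech's original argument (and Masur's independent geometric proof via the Teichm\"uller flow on the moduli space of abelian differentials) does the real work. Once that is granted, converting it into unique ergodicity and hence ergodicity is comparatively soft. Given that the excerpt cites \cite{Veech} for this theorem, in the write-up I would not reproduce the cocycle-positivity proof but rather record the reduction to it, cite \cite{Veech}, and note that the ergodicity statement follows immediately from the stronger unique ergodicity statement proved there.
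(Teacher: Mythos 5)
The paper offers no proof of this statement at all: it is imported verbatim from Veech's 1982 paper and cited as \cite{Veech}, so there is no internal argument to compare against. Your outline is the standard Masur--Veech route and is essentially correct as a reduction: Keane's condition gives minimality for a.e.\ $\la$, and the renormalization dynamics upgrade this to unique ergodicity, which implies ergodicity. One refinement you should make explicit: mere eventual positivity of the products $A^{(1)}\cdots A^{(n)}$ along a subsequence is not by itself enough to force the nested cones $A^{(1)}\cdots A^{(n)}\R_{\ge 0}^{\mathcal A}$ to collapse to a single ray (which is what unique ergodicity amounts to); one needs occurrences of a \emph{fixed} positive block $B$ infinitely often --- equivalently, recurrence of the renormalization orbit to a compact subset --- so that the distortion $\rho(B)$ of the positive factors is uniformly bounded and the projective diameters of the cones contract. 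This is exactly what the recurrence of the ergodic $\tilde R$-invariant measure (the content of Theorem \ref{powr}) supplies, so your final bookkeeping paragraph is the right fix; just phrase the recurrent event as ``return to a fixed positive-matrix cylinder'' rather than ``the products become positive''. With that adjustment, and granting the existence of the invariant measure as the genuinely hard input from \cite{Veech}, your sketch is a faithful account of the proof the paper is choosing to cite rather than reproduce.
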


We consider the operator $R:S_0^{\mathcal A}\times \R_{>0}^{\mathcal A}\to S_0^{\mathcal A}\times \R_{>0}^{\mathcal A}$, such that $R(\pi,\la)=R(T_{\pi,\la})$ is %T a
the first return map of $T_{\pi,\la}$ to the interval $[0,|\la|-\min\{\la_{\pi_0^{-1}(d)},\la_{\pi_1^{-1}(d)}\})$. We denote $R^n(\pi,\la)=(\pi^n,\la^n)$ for any $n$ if this object is well defined. We also denote by $I^n$ the domain of the IET given by $(\pi^n,\la^n)$ and by $I^n_a$ for $a\in\mathcal A$ the elements of associated partition into exchanged intervals. Any minimal subset $\mathfrak R\subset S_0^{\mathcal A}$ invariant under induced action of $R$ is called a \emph{Rauzy graph}. We also consider a \emph{normalised Rauzy-Veech induction} $\tilde R:S_0^\mathcal A\times \Lambda^\mathcal A\to S_0^\mathcal A\times \Lambda^\mathcal A$ such that $\tilde R(\pi,\la)=(\pi^1,\frac{1}{|\la^1|}\la^1)$, where $|\cdot|$ denotes the sum of coefficients of a vector.

 We have the following series of important facts.
 \begin{theorem}[\cite{Ra}]\label{twpierost}
 	Any Rauzy graph of permutations of $d\ge 2$ elements contains at least one permutation $\pi$ such that
 	\[
 	\pi_1\circ\pi_0^{-1}(1)=d\quad \text{and}\quad\pi_1\circ\pi_0^{-1}(d)=1.
 	\]
 \end{theorem}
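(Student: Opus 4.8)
\emph{Sketch of proof.} The statement is the classical theorem of Rauzy \cite{Ra}, and I would prove it by navigating inside $\mathfrak R$ with the two elementary Rauzy-Veech moves. Call an irreducible permutation $\pi=(\pi_0,\pi_1)$ \emph{standard} if $\pi_1\circ\pi_0^{-1}(1)=d$ and $\pi_1\circ\pi_0^{-1}(d)=1$, and encode $\pi$ by its two rows $\mathrm t(\pi):=(\pi_0^{-1}(1),\dots,\pi_0^{-1}(d))$ and $\mathrm b(\pi):=(\pi_1^{-1}(1),\dots,\pi_1^{-1}(d))$; thus $\pi$ is standard exactly when the first letter of $\mathrm t(\pi)$ is the last letter of $\mathrm b(\pi)$ and the last letter of $\mathrm t(\pi)$ is the first letter of $\mathrm b(\pi)$. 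Since $\mathfrak R$ is minimal, it is a single strongly connected orbit under the two moves $\mathcal R_0$ (``top'') and $\mathcal R_1$ (``bottom''): $\mathcal R_0$ leaves $\mathrm t(\pi)$ unchanged and replaces $\mathrm b(\pi)$ by the word obtained by deleting its last letter and reinserting it immediately to the right of the last letter of $\mathrm t(\pi)$, and $\mathcal R_1$ is the same with the two rows interchanged. Two consequences will be used throughout: (a) both moves fix the first letter of $\mathrm t(\pi)$ and the first letter of $\mathrm b(\pi)$, so $\alpha:=\pi_0^{-1}(1)$ and $\beta:=\pi_1^{-1}(1)$ are the same for every permutation of $\mathfrak R$, with $\alpha\ne\beta$ by irreducibility; (b) iterating $\mathcal R_1$ cyclically permutes the suffix of $\mathrm t(\pi)$ lying strictly to the right of the (fixed) last letter of $\mathrm b(\pi)$, and symmetrically $\mathcal R_0$ cyclically permutes the suffix of $\mathrm b(\pi)$ to the right of the last letter of $\mathrm t(\pi)$.

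\emph{Step 1: reach a permutation with $\pi_1\circ\pi_0^{-1}(1)=d$}, i.e.\ one in which $\alpha$ occupies the last slot of $\mathrm b(\pi)$. Introduce the integer $P(\pi):=$ the position of $\alpha$ inside the word $\mathrm b(\pi)$. Because $\mathcal R_1$ does not alter $\mathrm b(\pi)$ and $\mathcal R_0$ can only move $\alpha$ to the right in $\mathrm b(\pi)$ --- and does so strictly precisely when $\alpha$ lies to the right of the last letter of $\mathrm t(\pi)$ in $\mathrm b(\pi)$ --- the quantity $P$ is non-decreasing along every sequence of moves. It therefore suffices to show that $P$ can never be ``frozen'' below $d$, i.e.\ that whenever $P(\pi)<d$ some permutation reachable from $\pi$ has strictly larger $P$. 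If the last letter of $\mathrm t(\pi)$ already sits to the left of $\alpha$ in $\mathrm b(\pi)$, one application of $\mathcal R_0$ does it; otherwise one first maneuvers, by a finite preliminary string of moves, to a permutation whose last top letter has been brought to a slot of $\mathrm b(\pi)$ left of $\alpha$ --- using (b) to recycle the last top letter and, if necessary, $\mathcal R_0$ to reshuffle the part of $\mathrm b(\pi)$ right of $\alpha$ --- and then applies $\mathcal R_0$. \emph{This last point, the exclusion of a frozen configuration, is the real combinatorial content of the theorem and the step I expect to be hardest:} one must show that the ``bad'' situation (in which the first $P$ slots of $\mathrm b(\pi)$ and the identity of the last top letter would be pinned down across all of $\mathfrak R$) contradicts irreducibility together with strong connectedness of $\mathfrak R$. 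I would handle it either by a direct, if delicate, analysis of the two moves, or, as in Rauzy's original argument, by an auxiliary induction on $d=\#\mathcal A$: once $\alpha$ has been placed conveniently, collapse the interval $I_\alpha$ to pass to a Rauzy graph on $d-1$ letters, invoke the inductive hypothesis, and lift back.

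\emph{Step 2: upgrade to a standard permutation.} Suppose now $\pi$ satisfies $\pi_1\circ\pi_0^{-1}(1)=d$, so that $\alpha$ is at once the first letter of $\mathrm t(\pi)$ and the last letter of $\mathrm b(\pi)$. Every $\mathcal R_1$-move keeps $\mathrm b(\pi)$ unchanged, hence preserves this property, while by (b) it cyclically permutes the suffix of $\mathrm t(\pi)$ to the right of the last letter of $\mathrm b(\pi)$, which here is $\alpha$ sitting in the first slot of $\mathrm t(\pi)$ --- that is, it cyclically permutes all of $\mathrm t(\pi)$ except its first letter. Iterating $\mathcal R_1$ one thus realizes every cyclic rotation of $(\pi_0^{-1}(2),\dots,\pi_0^{-1}(d))$, and in particular one can bring $\beta$ (which occurs among these, since $\beta\ne\alpha$) into the last slot of $\mathrm t(\pi)$. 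The resulting permutation has $\alpha$ first on top and last on bottom and $\beta$ last on top and first on bottom, i.e.\ $\pi_1\circ\pi_0^{-1}(1)=d$ and $\pi_1\circ\pi_0^{-1}(d)=1$; it is standard and belongs to $\mathfrak R$, which proves the theorem. As indicated, the only non-routine ingredient is the no-freezing statement in Step 1.
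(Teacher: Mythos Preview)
The paper does not prove this theorem at all --- it merely cites it from Rauzy's original article \cite{Ra}. So there is no in-paper proof to compare your sketch against.

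On its own merits: your Step~2 is correct and clean. Once $\alpha=\pi_0^{-1}(1)$ is in the last slot of the bottom row, the bottom move $\mathcal R_1$ preserves this (it leaves the bottom row untouched) and cyclically permutes the top positions $2,\ldots,d$, so $\beta$ can indeed be rotated into the last top slot, yielding a standard permutation.

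Your Step~1 is where the content lies, and you are candid that the ``no-freezing'' claim is the real work and is left undone. The monotonicity observation is correct (as long as $P<d$; note that once $P=d$ the move $\mathcal R_0$ displaces $\alpha$ itself and $P$ may drop), and combined with strong connectedness it yields immediately that if $P$ never reaches $d$ then $P$ is \emph{constant} on all of $\mathfrak R$ and, moreover, the first $P$ slots of the bottom row are the same for every permutation in the class. Extracting a contradiction with irreducibility from this frozen block is genuinely the substance of the theorem, and the ``maneuver'' you describe is too loose to count as a proof: under iterated $\mathcal R_1$ the only letters that can be cycled into the last top slot are those lying to the right of the last-bottom letter in the top row, and there is no a~priori reason any of them sits to the left of $\alpha$ in the bottom row. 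This is precisely the step Rauzy handles (by the induction on $d$ you mention) in the cited paper. Your sketch correctly locates the difficulty but does not resolve it.
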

 \begin{theorem}[\cite{Ra}]\label{infi}
 	If $\pi\in S_0^{\mathcal A}$, then for almost every $\la\in\R_{> 0}^{\mathcal A}$ the Rauzy-Veech induction can be iterated infinitely many times on $(\pi,\la)$. Moreover $\lim_{n\to\infty}|\la^n|=0$.
 \end{theorem}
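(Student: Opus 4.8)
I would prove the two assertions in turn: infinite iterability for a.e.\ $\la$ is a soft measure-theoretic statement, whereas $|\la^n|\to 0$ rests on a genuine combinatorial property of the Rauzy diagram. For the first, observe that $R$ fails to be defined at $(\pi,\la)$ exactly when the two ``last'' intervals, those indexed by $\pi_0^{-1}(d)$ and $\pi_1^{-1}(d)$, have equal length; on each of its finitely many branches $R$ is the restriction of a linear automorphism of $\R^{\mathcal A}$ given by an integer matrix, so $R$, all its iterates and all their inverse branches preserve the class of Lebesgue-null sets. Hence the set of $\la$ for which the induction terminates after finitely many steps is contained in $\bigcup_{n\geq 0}R^{-n}(Z)$, where $Z$ is the finite union over irreducible permutations $\pi'$ of the hyperplanes $\{\la\in\R_{>0}^{\mathcal A}:\la_{(\pi')_0^{-1}(d)}=\la_{(\pi')_1^{-1}(d)}\}$; this is a countable union of null sets, hence null. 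Equivalently one may invoke Keane's dichotomy — infinite iterability is precisely Keane's condition that $T_{\pi,\la}$ have no \emph{connection}, i.e.\ no forward orbit of a discontinuity of $T_{\pi,\la}$ ever meeting a discontinuity of $T_{\pi,\la}^{-1}$ — together with the fact that, for each fixed combinatorial datum, admitting a connection is a codimension-one condition.

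For the convergence I would fix a generic $\la$, normalise $|\la|=1$ by homogeneity, and first note that $(|\la^n|)_n$ is strictly decreasing, since at step $n$ one removes a piece of length $\min\{\la^n_a,\la^n_b\}>0$, where $a,b$ denote the top-last and bottom-last letters of $(\pi^n,\la^n)$; thus $|\la^n|\downarrow\ell$ for some $\ell\geq 0$ and the point is to show $\ell=0$. Writing $r^n_c$ for the first-return time of $I^n_c$ to $I^n$ under $T_{\pi,\la}$, the Rokhlin tower partition of $[0,1)$ built over $I^n$ yields the identity $\sum_{c\in\mathcal A}r^n_c\la^n_c=1$ for every $n$; moreover the $r^n_c$ are non-decreasing in $n$, and at each step only the return time of the \emph{losing} letter changes — it increases by the return time of the winning letter, hence by at least $1$. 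Consequently, if a given letter is the loser infinitely often then its return time tends to $\infty$, and, by the tower identity, its length tends to $0$.

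It therefore suffices to know that, under infinite iterability, \emph{every} letter is the losing letter infinitely often, and this is the step I expect to be the real obstacle. It is the combinatorial lemma of Rauzy, and its proof is exactly where the absence of a connection is decisive: if some letter $c$ were eventually never a losing letter, hence eventually never ``processed'', then a pertinent endpoint of $I^N_c$ would persist as a discontinuity common to all the induced maps $T_{\pi^n,\la^n}$, and unwinding the induction this produces a finite-orbit coincidence between a discontinuity of $T_{\pi,\la}$ and one of $T_{\pi,\la}^{-1}$, i.e.\ a connection, contradicting Keane's condition for our generic $\la$. Granting the lemma, $r^n_c\to\infty$ and hence $\la^n_c\to 0$ for every $c$, so $|\la^n|=\sum_{c\in\mathcal A}\la^n_c\to 0$, which gives $\ell=0$ and completes the proof. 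An equivalent, perhaps cleaner, route trades the return-time bookkeeping for the cumulative length-contraction matrices: infinite iterability forces this matrix over suitable windows $[n_k,n_{k+1}]$ to be entrywise positive infinitely often, and when $\la^{n_k}=A\,\la^{n_{k+1}}$ with all entries of $A$ at least $1$ one gets $\la^{n_k}_c\geq|\la^{n_{k+1}}|$ for each $c$, hence $|\la^{n_{k+1}}|\leq\min_c\la^{n_k}_c\leq|\la^{n_k}|/d$ and geometric decay along a subsequence.
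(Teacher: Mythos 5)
The paper does not actually prove this statement: it is quoted verbatim from Rauzy's paper \cite{Ra}, so there is no in-text argument to compare yours against, and I can only assess your proof on its own terms. Your first half is correct and standard: the induction fails at step $n$ exactly on the $R^n$-preimage of the tie hyperplanes $\la_{\pi_0^{-1}(d)}=\la_{\pi_1^{-1}(d)}$, and since each branch of $R^n$ is the restriction of $\la\mapsto (A^{(n)})^{-1}\la$ with $A^{(n)}$ a unimodular integer matrix, these preimages are Lebesgue-null; a countable union of null sets is null. The bookkeeping in your second half is also right: $\sum_{c}s_c^n\la^n_c=|\la|$ (sum Lemma \ref{proposition}(i) over rows), only the loser's height changes and it increases by the winner's height, and $\la^n_c\le|\la|/s^n_c$; so everything correctly reduces to the claim that every letter is the loser infinitely often.

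The gap is in your justification of that claim. The inference ``if some letter $c$ were eventually never a losing letter, hence eventually never processed'' is false as stated: a letter can be the winner arbitrarily often without ever being the loser (already for $d=2$ this happens throughout any long run of a single partial quotient), so ``never loses'' does not imply ``never occupies the last slot,'' and no persistent common discontinuity is produced. The standard argument goes the other way: one first shows every letter \emph{wins} infinitely often --- if $c$ stops winning then $\la^n_c$ freezes at some value $\la^N_c>0$, each subsequent loss by $c$ costs the then-winner the fixed amount $\la^N_c$ and hence can occur only finitely often, so $c$ is eventually never in the last position of either row; the positions of such letters under the Rauzy moves are monotone and stabilize, which contradicts irreducibility of the permutations in the class. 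From ``every letter wins infinitely often'' one then extracts infinitely many windows over which the induction matrix is strictly positive; this gives both the geometric decay $|\la^{n_{k+1}}|\le|\la^{n_k}|/d$ of your closing paragraph and, as a byproduct, that all heights tend to infinity, i.e.\ the loser lemma you wanted. So your final ``cleaner route'' is actually the one that closes the argument, but the positivity of those windows is itself the nontrivial combinatorial input, asserted rather than proved in your write-up; within this paper you could at least have invoked Lemma \ref{proposition}(iii) (Veech) together with the cocycle identity (ii) to supply it.
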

 \begin{theorem}[\cite{Mas} and \cite{Veech} independently]\label{powr}
 	For every Rauzy graph $\mathfrak R\subset S_0^{\mathcal A}$ there exists a measure $\rho$ on $\mathfrak R\times\Lambda^{\mathcal A}$ invariant under $\tilde R$ which is equivalent to the product of counting and Lebesgue measure. Then $\tilde R$ with the measure $\rho$ is ergodic and recurrent.
 	\end{theorem}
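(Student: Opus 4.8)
The plan is to produce the invariant measure by first lifting to the cone, then accelerating the induction to a uniformly hyperbolic return map, and finally ``un-accelerating''. First I record the two structural facts that make the density exist at all. (a) On the cone $\mathfrak R\times\R_{>0}^{\mathcal A}$ the non-normalised induction $R$ --- which by Theorem~\ref{infi} is defined $\mathrm{Leb}$-a.e.\ and can be iterated infinitely often --- preserves $\kappa\otimes\mathrm{Leb}$, $\kappa$ the counting measure: each $\{\pi\}\times\R_{>0}^{\mathcal A}$ splits, off a hyperplane, into the two regions $\{\la_{\pi_0^{-1}(d)}>\la_{\pi_1^{-1}(d)}\}$ and $\{\la_{\pi_0^{-1}(d)}<\la_{\pi_1^{-1}(d)}\}$, on each of which $R$ is the map $\la\mapsto M\la$ with $M$ an elementary unimodular integer matrix, so $|\det M|=1$ and the branch carries its region bijectively onto a full subcone. (b) Since $R$ is fibrewise linear it is covariant for the dilations $(\pi,\la)\mapsto(\pi,c\la)$; writing a cone point as $(\pi,r,\omega)$ with $r=|\la|$, $\omega=\la/|\la|\in\Lambda^{\mathcal A}$ gives $R(\pi,r,\omega)=(\pi',r\,\phi(\pi,\omega),\tilde R(\pi,\omega))$ for some $\phi>0$, and the change of variables $ds/s=dr/r$ shows that $\tfrac{dr}{r}\otimes\rho$ is $R$-invariant iff $\rho$ is $\tilde R$-invariant, the bijection respecting equivalence to Lebesgue measure. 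So it suffices to find a $\sigma$-finite $\tilde R$-invariant $\rho$ on $\mathfrak R\times\Lambda^{\mathcal A}$ equivalent to $\kappa\otimes\mathrm{Leb}$.

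The transfer operator of $\tilde R$ itself does not Ces\`aro-converge to a usable density (the infinite-measure phenomenon: at the simplest cusps the density is non-integrable). I would therefore pass to the Zorich acceleration $\tilde Z=\tilde R^{\tau(\cdot)}$, where $\tau(x)\ge1$ is the first time the winning side of the Rauzy move changes; $\tilde Z$ is again a full-branch, projectively expanding Markov map, but its branch matrices are now ``long'' Rauzy products. Using Theorem~\ref{twpierost} --- $\mathfrak R$ contains a permutation from which a bounded number of Rauzy moves produces a strictly positive matrix --- together with minimality of the Rauzy graph, one shows $\tilde Z$ is uniformly hyperbolic: along each branch (or a uniformly bounded iterate of it) the matrix is strictly positive, hence contracts the Hilbert projective metric on $\Lambda^{\mathcal A}$ at a uniform rate and with bounded distortion. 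This uniform bounded-distortion estimate for the accelerated renormalisation is the single hard point of the proof; it is the substance of the arguments of Veech and of Masur, and everything around it is soft.

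Granting the distortion estimate, the classical Folklore/R\'enyi theorem for expanding Markov maps with uniformly bounded distortion applies to $\tilde Z$ and yields a \emph{finite} $\tilde Z$-invariant measure $\rho_Z$ equivalent to $\kappa\otimes\mathrm{Leb}$, with $\tilde Z$ ergodic. I then un-accelerate via the tower/Kac formula
\[
\rho:=\sum_{k\ge0}(\tilde R^{k})_*\bigl(\rho_Z|_{\{\tau>k\}}\bigr).
\]
This is a $\tilde R$-invariant $\sigma$-finite measure (finite iff $\int\tau\,d\rho_Z<\infty$, which in general fails --- whence a genuinely infinite $\rho$); it is absolutely continuous with respect to $\kappa\otimes\mathrm{Leb}$ because each $(\tilde R^{k})_*$ sends absolutely continuous measures to absolutely continuous measures, and it dominates its $k=0$ term $\rho_Z$, so $\rho\sim\kappa\otimes\mathrm{Leb}$. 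This is the required invariant measure.

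It remains to get recurrence and ergodicity of $(\tilde R,\rho)$. Ergodicity is inherited from that of $\tilde Z$: a $\tilde R$-invariant set is a fortiori $\tilde Z$-invariant, hence $\rho_Z$-trivial, hence (as $\rho_Z\sim\mathrm{Leb}$ and $\rho\ll\mathrm{Leb}$) $\rho$-trivial. For recurrence I use the cone: by Theorem~\ref{infi}, $|\la^n|\to0$, so $\mathrm{Leb}$-a.e.\ $R$-orbit is eventually contained in the bounded set $E=\mathfrak R\times\{|\la|\le1\}$; thus $E$ has finite Lebesgue measure, $\mathrm{Leb}$-a.e.\ orbit meeting $E$ returns to it infinitely often, and $\bigcup_{n\ge0}R^{-n}E$ is conull, so by the Hopf decomposition $R$ is conservative for $\mathrm{Leb}$, hence for the equivalent invariant measure $\tfrac{dr}{r}\otimes\rho$; conservativity then passes to $\tilde R$, since a wandering set $W$ for $\tilde R$ would make $W$ times a bounded $r$-interval a wandering set for $R$. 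This completes the proof of Theorem~\ref{powr}.
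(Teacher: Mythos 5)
The paper does not prove Theorem \ref{powr}; it imports it from Masur and Veech, so your sketch has to be measured against their arguments (or the later Zorich/Marmi--Moussa--Yoccoz treatments, which is essentially the route you chose). The architecture --- accelerate, apply a distortion argument, un-accelerate by a Kakutani tower --- is the right one, but two of your key inputs are false as stated. First, fact (a): Lebesgue measure on the cone is \emph{not} $R$-invariant. Each branch of $R$ carries its half-cone $\{\la_{\pi_0^{-1}(d)}\gtrless\la_{\pi_1^{-1}(d)}\}$ bijectively onto the \emph{entire} cone $\{\pi'\}\times\R_{>0}^{\mathcal A}$ of the image permutation (not onto a proper subcone: after the subtraction the new coordinate ranges over all of $\R_{>0}$), and since the two Rauzy operations are bijections of $\mathfrak R$, every $\pi'$ receives exactly two such branches; hence $\mathrm{Leb}(R^{-1}E)=2\,\mathrm{Leb}(E)$. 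Already for $d=2$ the cone map is the slow Euclidean algorithm and its invariant density is $1/(\la_A\la_B)$, not a constant. This is exactly why Veech passes to zippered rectangles, where the extended induction is essentially invertible and genuinely volume-preserving, and only afterwards integrates out the heights to obtain an explicit non-constant density on $\mathfrak R\times\Lambda^{\mathcal A}$. (Your reduction (b) from the cone to the simplex is a correct formal correspondence, but it does not rescue (a).)

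Second, the Zorich acceleration $\tilde Z$ is not uniformly expanding with bounded distortion, and for $d\ge3$ no uniformly bounded iterate of it has all branch matrices positive: a single Zorich branch has matrix of the form ``one fixed letter beats $k$ successive losers'', which is never positive, and there are arbitrarily long Zorich words along which some letter never wins (loops in a proper subdiagram of $\mathfrak R$), so positivity cannot be forced after boundedly many steps. The actual proofs induce on a cylinder determined by one fixed positive product (the content of Theorem \ref{twpierost} together with Lemma \ref{proposition}(iii)), obtain countably many inverse branches with only summable --- not uniform --- distortion, and then need a genuinely quantitative estimate on $\mathrm{Leb}\{\la:\ \text{the return time is large}\}$ (Veech's key measure estimate) to make the induced invariant measure finite; calling everything outside the distortion bound ``soft'' misplaces where the work is. Finally, your recurrence argument fails: since $|\la^n|$ decreases monotonically to $0$ (Theorem \ref{infi}), the radial cocycle has strictly negative drift and the cone extension is totally dissipative for $\tfrac{dr}{r}\otimes\rho$; moreover $E=\mathfrak R\times\{|\la|\le1\}$ has infinite $\tfrac{dr}{r}\otimes\rho$-measure, so no Hopf--Halmos criterion applies there. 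Conservativity should instead be read off from the construction you already have: $\rho$ is a Kakutani tower over the finite measure-preserving, hence conservative, system $(\tilde Z,\rho_Z)$, and a tower over a conservative base is conservative.
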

 	
 	Denote by $A^{(n)}(\pi,\la)=A^{(n)}=[A^{(n)}_{ab}]_{a,b\in\mathcal A}$ \emph{$n$-th Rauzy-Veech induction matrix} which is such that the coefficient $A^{(n)}_{ab}$ shows how many times the interval $I^n_b$ visits the interval $I_a$ under iterations of $T_{\pi,\la}$ before its first return to $I^n$.
 	
 	\begin{remark}\label{rozkladnawieze}
 		For almost every $(\pi,\la)\in \mathfrak R\times S_0^\mathcal A$ and for every $n\in\N$, the interval $I$ (which is the domain of $T_{\pi,\la}$) can be divided into~$d$ disjoint Rokhlin towers of the form $\{T^iI^n_b;\,0\leq i<s_b^n\}$, where $s_b^n:=\sum_{a\in\mathcal A}A_{ab}^n(\pi,\la)$
 		for $b\in\mathcal A$ (see\ Lemma~4.2 in
 		\cite{Viana}). More precisely,  $T^iI^n_b$ for $i=0,\ldots, s_b^n-1$ and $b\in\mathcal A$ are pairwise disjoint intervals filling up $I$. Moreover each such interval
 		$T^iI^n_b$ is included in some interval $I_a$ and $T^{s_b^n}I^n_b\subset I^n$. It follows that $T_{\pi,\la}$ acts on each interval $T^iI^n_b$ by translation and
 		$s_b^n$ is the time of first return of $I_b^{n}$ into $I^{n}$ via $T_{\pi,\la}$. Finally, the discontinuity points of $T_{\pi,\la}$ are leftpoints of some intervals $T^iI^n_b$.
 	\end{remark}
 	\begin{lemma}[\cite{Veech0}]\label{proposition}
 		Let $(\pi,\la)\in S_0^{\mathcal A}\times \R_{\ge 0}^{\mathcal A}$ be such that $R^n(\pi,\la)$ is well defined for every $n\in\N$. Then
 		\begin{enumerate}[(i)]
 			\item $A^{(n)}(\pi,\la)\la^n=\la$;
 			\item $A^{(n)}(\pi,\la)=A^{(1)}(\pi^0,\la^0)\cdot\ldots\cdot A^{(1)}(\pi^{n-1},\la^{n-1})$;
 			\item there exists $n\in \N$ such that the matrix $A^{(n)}(\pi,\la)$ is strictly positive
 			\item for almost every $(\pi,\la)\in S_0^{\mathcal A}\times \R_{\ge 0}^{\mathcal A}$  there exists $n\in \N$ such that the condition $(iii)$ is satisfied and $\pi^n=\pi$.
 		\end{enumerate}
 		\end{lemma}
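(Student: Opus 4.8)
\emph{Proof idea.} The plan is to prove the four assertions in order: (i) and (ii) are bookkeeping about Rokhlin towers and return-time cocycles, (iii) is the geometric core (and, I expect, the main obstacle), and (iv) will follow from (iii) together with the recurrence of the normalised Rauzy-Veech map. For (i) I would invoke the Rokhlin tower decomposition of Remark~\ref{rozkladnawieze}, which is available here because $R^n(\pi,\la)$ is defined for every $n$: the domain $I$ of $T:=T_{\pi,\la}$ is the disjoint union of the towers $\{T^iI^n_b;\,0\le i<s_b^n\}$, $b\in\mathcal A$, each level $T^iI^n_b$ being contained in a single exchanged interval $I_a$ and having length $|I^n_b|=\la^n_b$ (since $T$ acts on it by translation). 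As by definition the $b$-th tower has exactly $A^{(n)}_{ab}$ levels inside $I_a$, summing their lengths and using that the towers partition $I$ yields $\la_a=|I_a|=\sum_{b\in\mathcal A}A^{(n)}_{ab}\la^n_b$, that is $A^{(n)}(\pi,\la)\la^n=\la$.

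For (ii), writing $B_k:=A^{(1)}(\pi^k,\la^k)$, I would first establish the one-step rule $A^{(n)}=A^{(n-1)}B_{n-1}$ and then iterate, the base case $n=1$ being the definition. Let $S:=T_{\pi^{n-1},\la^{n-1}}$ be the first return map of $T$ to $I^{n-1}$, whose own first return map to $I^n$ is $T_{\pi^n,\la^n}$. Fixing $x\in I^n_b$ and following its $T$-orbit until the first return to $I^n$, one cuts this orbit segment at the successive returns to $I^{n-1}$; this decomposes it into consecutive blocks, the $j$-th of which begins at $S^jx\in I^{n-1}_{c_j}$ and, by the definition of $A^{(n-1)}$, contributes exactly $A^{(n-1)}_{a,c_j}$ visits to $I_a$. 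Since the number of $j$ with $S^jx\in I^{n-1}_c$ occurring before $x$ returns to $I^n$ equals $(B_{n-1})_{cb}$, summing over $j$ gives $A^{(n)}_{ab}=\sum_{c\in\mathcal A}A^{(n-1)}_{ac}(B_{n-1})_{cb}$, i.e.\ $A^{(n)}=A^{(n-1)}B_{n-1}$; iterating yields $A^{(n)}=B_0B_1\cdots B_{n-1}=A^{(1)}(\pi^0,\la^0)\cdots A^{(1)}(\pi^{n-1},\la^{n-1})$.

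For (iii) I would first recall that infinite iterability of $R$ is equivalent to the Keane condition (no connection) for $T$, which in particular makes $T$ minimal. Next, from (ii) each $B_k$ is \emph{elementary}: it is the identity with a single $1$ added off the diagonal, the corresponding pair of letters being the ``winner'' and ``loser'' of that step. Hence $A^{(n+1)}=A^{(n)}B_n$ is obtained from $A^{(n)}$ by adding one column to another, so the entries of $A^{(n)}$ are nondecreasing in $n$ and, once positive, remain positive; and (i) already guarantees that each \emph{row} of $A^{(n)}$ has a positive entry, since $\sum_bA^{(n)}_{ab}\la^n_b=\la_a>0$. The remaining point --- that for $n$ large every \emph{column} of $A^{(n)}$ is strictly positive, equivalently that the Rokhlin tower over each $I^n_b$ eventually sweeps across every $I_a$ before returning to $I^n$ --- is where the real work lies. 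Here the plan is to combine minimality (which forces $I_a$ to be visited within a bounded number of iterations from every point of $I$) with the combinatorics of an infinite Rauzy path, whose sequence of winner/loser pairs drives the column-addition above so as to propagate the positivity of each row across all columns. Making this last mechanism precise --- controlling the Rauzy path well enough that the support of $A^{(n)}$ eventually fills the whole matrix --- is the delicate combinatorial part (carried out in \cite{Veech0}), and I expect it to be the hardest step.

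For (iv) I would combine (iii) with recurrence. By (iii) there is $n_0$ with $A^{(n_0)}(\pi,\la)$ strictly positive, and for $n\ge n_0$ one has $A^{(n)}=A^{(n_0)}C$ with $C:=B_{n_0}\cdots B_{n-1}$; being a product of the elementary matrices above, $C$ has nonnegative entries and dominates the identity entrywise, so $A^{(n)}_{ab}\ge A^{(n_0)}_{ab}C_{bb}\ge A^{(n_0)}_{ab}>0$, i.e.\ $A^{(n)}$ is strictly positive for every $n\ge n_0$. It then suffices to find $n\ge n_0$ with $\pi^n=\pi$. Since the Rauzy path and the matrices $A^{(n)}$ depend only on the projective class of $\la$ (the winner at each step being determined by which of the two competing coordinates is larger), I would pass to the normalised induction $\tilde R$ on $\mathfrak R\times\Lambda^{\mathcal A}$, where $\mathfrak R$ is the Rauzy graph of $\pi$; the cylinder $\{\pi\}\times\Lambda^{\mathcal A}$ has positive $\rho$-measure, so by the recurrence of $\tilde R$ asserted in Theorem~\ref{powr}, $\rho$-almost every point of it returns to it infinitely often, and since $\rho$ is equivalent to the product of counting and Lebesgue measure this gives, for Lebesgue-almost every $\la$, infinitely many $n$ with $\pi^n=\pi$. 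Choosing such an $n\ge n_0$ finishes the argument; decomposing $S_0^{\mathcal A}$ into its Rauzy graphs and discarding the Lebesgue-null set of $\la\in\R_{\ge 0}^{\mathcal A}$ for which $R$ is not infinitely iterable (Theorem~\ref{infi}) yields the ``almost every $(\pi,\la)$'' statement.
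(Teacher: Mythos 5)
The paper offers no proof of this lemma at all --- it is quoted verbatim from \cite{Veech0} --- so there is nothing to compare your argument against line by line. Your reconstructions of (i) via the Rokhlin tower decomposition of Remark~\ref{rozkladnawieze}, of (ii) via cutting a return orbit at the intermediate returns to $I^{n-1}$, and of (iv) via entrywise domination $B_k\ge\mathrm{Id}$ plus Poincar\'e recurrence of $\tilde R$ (Theorem~\ref{powr}) are correct and standard. For (iii) you correctly isolate the genuinely nontrivial point --- that every column of $A^{(n)}$ eventually becomes positive, which via minimality reduces to showing $\min_b s^n_b\to\infty$, i.e.\ that every letter loses infinitely often along the Rauzy path --- but you do not prove it, deferring instead to \cite{Veech0}; since the paper itself defers the entire lemma to the same reference, this is consistent with its treatment, though as a standalone proof (iii) remains incomplete.
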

Finally we also have the following result which is proven in \cite{BFr}:
\begin{lemma}\label{mnozmac}
Let $T_{\pi,\la}:[0,1)\to[0,1)$ be an interval exchange transformation such that $R(\pi,\la)$ is well defined.  Then for every $\lambda'\in\R_{>0}^\mathcal A$  we have
\[A^1(\pi,A^1(\pi,\la)\la')=A^1(\pi,\la)\text{ and }\pi^1(\pi,A^1(\pi,\la)\la')=\pi^1(\pi,\la).\]
\end{lemma}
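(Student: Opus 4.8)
The plan is to reduce the claim to the elementary observation that a single step of Rauzy--Veech induction is governed entirely by $\pi$ together with the \emph{type} of the step, namely the sign
\[
\ve\;=\;\sgn\bigl(\la_{\pi_0^{-1}(d)}-\la_{\pi_1^{-1}(d)}\bigr)\in\{-1,+1\},
\]
and that the hypothesis ``$R(\pi,\la)$ is well defined'' is precisely the statement that $\la_{\pi_0^{-1}(d)}\neq\la_{\pi_1^{-1}(d)}$, so that $\ve$ is well defined. Write $\alpha=\pi_0^{-1}(d)$ and $\beta=\pi_1^{-1}(d)$, and let $w,\ell$ be these two letters ordered by the comparison, i.e.\ $w=\alpha,\ \ell=\beta$ when $\ve=+1$ and $w=\beta,\ \ell=\alpha$ when $\ve=-1$. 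Recall from the construction of Rauzy--Veech induction that $\la^1$ agrees with $\la$ in every coordinate except the $w$-th, where $\la^1_w=\la_w-\la_\ell>0$, and that $A^1(\pi,\la)$ is the elementary matrix whose left action adds the $\ell$-th coordinate to the $w$-th and fixes all the others; this is consistent with the relation $A^1(\pi,\la)\la^1=\la$ of Lemma~\ref{proposition}(i). In particular both $A^1(\pi,\la)$ and $\pi^1(\pi,\la)$ depend only on the pair $(\pi,\ve)$.

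Granting this, fix $\la'\in\R_{>0}^{\mathcal A}$ and set $\mu:=A^1(\pi,\la)\la'$. By the description of $A^1(\pi,\la)$ just recalled, $\mu_w=\la'_w+\la'_\ell$ while $\mu_c=\la'_c$ for every $c\neq w$; in particular $\mu_\ell=\la'_\ell$, whence
\[
\mu_w-\mu_\ell=\la'_w>0,
\]
using the coordinatewise positivity of $\la'$. Unwinding the definition of $w$ and $\ell$, this says exactly that $\sgn\bigl(\mu_{\pi_0^{-1}(d)}-\mu_{\pi_1^{-1}(d)}\bigr)=\ve$: if $\ve=+1$ then $\mu_{\pi_0^{-1}(d)}-\mu_{\pi_1^{-1}(d)}=\mu_w-\mu_\ell>0$, and if $\ve=-1$ then $\mu_{\pi_0^{-1}(d)}-\mu_{\pi_1^{-1}(d)}=\mu_\ell-\mu_w<0$. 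Hence $R(\pi,\mu)$ is well defined and has the same type $\ve$ as $(\pi,\la)$; since the permutation $\pi$ is unchanged, the dependence-on-$(\pi,\ve)$-only fact from the first paragraph yields $A^1(\pi,\mu)=A^1(\pi,\la)$ and $\pi^1(\pi,\mu)=\pi^1(\pi,\la)$. As $\mu=A^1(\pi,\la)\la'$, these are the two asserted identities.

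I do not expect a genuine obstacle here: once the right viewpoint is in place the argument is a one-line computation. The only points demanding a little care are (a) invoking the standard fact that the combinatorial output of one Rauzy--Veech step — the matrix $A^1$ and the new permutation $\pi^1$ — is a function of $\pi$ and the type $\ve$ alone, together with the explicit elementary form of $A^1(\pi,\la)$ that this entails, and (b) carrying the bookkeeping through both values of $\ve$ uniformly, which the winner/loser notation $w,\ell$ is designed to handle. It is worth noting that the only property of $\la'$ actually used is the positivity of its coordinates, so the hypothesis $\la'\in\R_{>0}^{\mathcal A}$ is more than sufficient; moreover the argument simultaneously shows that $R\bigl(\pi,A^1(\pi,\la)\la'\bigr)$ is well defined, which is what makes the two right-hand sides meaningful.
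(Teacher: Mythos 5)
The paper does not supply its own proof of this lemma; it is stated with a citation to \cite{BFr}, so there is no internal argument to compare against. Your proof is correct, and it is the natural one: the two load-bearing facts are (a) that the single-step matrix $A^{(1)}(\pi,\la)$ and the new permutation $\pi^1(\pi,\la)$ are determined by $\pi$ together with the type $\ve=\sgn\bigl(\la_{\pi_0^{-1}(d)}-\la_{\pi_1^{-1}(d)}\bigr)$, with $A^{(1)}$ the elementary matrix adding the loser coordinate to the winner coordinate, and (b) that for $\mu:=A^{(1)}(\pi,\la)\la'$ one has $\mu_w-\mu_\ell=\la'_w>0$, so $(\pi,\mu)$ has the same type $\ve$. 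Your winner/loser notation handles both values of $\ve$ uniformly, your computation that $R(\pi,\mu)$ is well defined is correct and needed (it makes the left-hand sides meaningful), and you correctly identify coordinatewise positivity of $\la'$ as the only hypothesis used. I have no corrections; this is almost certainly the argument in \cite{BFr}.
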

For more information on interval exchange transformations we refer the reader to \cite{Viana}.

\subsection{Special flows}\label{sub:spec}
Let $T\in Aut(X,\cB,\mu)$ and $f\in L^1_+(X,\cB,\mu)$. We define the $\Z$-{\em cocycle}, by setting for $k\in\N\cup\{0\}$ 
$$
S_k(f)(x):=\sum_{i=0}^{k-1}f(T^ix),
$$
and, for $-k\in \N$, $S_k(f)(x):=-S_{-k}(f)(T^{k}x)$. Then the special flow $\mathcal T^f:=\{T_t^f\}_{t\in \R}$ is defined on $X^f:=\{(x,s)\;:\; x\in X, 0\leq s<f(x)\}$ by 
$$
T_t^f(x,s):=(T^nx,s+t-S_n(f)(x)),
$$
where $n\in \Z$ is unique such that $S_n(f)(x)\leq s+t<S_{n+1}(f)(x)$. The $\sigma$-algebra $\mathcal B^f$ on $X^f$ is $\cB\otimes \cB(\R)$ restricted to $X^f$ and $\{T_t^f\}_{t\in\R}$ preserves measure $\mu^f$ which is measure $\mu\otimes \lambda_\R$ restricted to $X^f$.

We will consider the case $T$ being an irrational rotation or an interval exchange transformation and the {\em roof function} $f$ satisfying some regularity conditions.

\section{Criterion on spectral disjointness.}\label{sec:dis}
In this section we will state a criterion on spectral disjointness of two measure-preserving flows. We have the following definition:
\begin{definition}\label{def:expdec}
We say that a measure $P\in\mathcal P(\R)$ has \emph{exponential decay} if there exist constants $c,b\in\R_{>0}$ such that
	\[
	P\big((-\infty,-t)\cup(t,\infty) \big)<ce^{-b t}\ \text{ for every }\ t\in\R_{>0}.
	\]
\end{definition}

	\begin{remark}\label{anal}
		If $P\in\mathcal P(\R)$ has exponential decay then the Fourier transform $\hat P(\cdot)$ is an analytic function. 
		\end{remark}
	
	We now present a criterion on two flows being spectrally disjoint. The proof of the following result is based on the proof of Remark 1 in \cite{LW}. 
	\begin{theorem}\label{krytspek}
		Assume that $\mathcal T=\{T_t\}_{t\in\R}$ and $\mathcal S=\{S_t\}_{t\in\R}$ are weakly mixing flows on probability spaces $(X,\mathcal B,\mu)$ and $(Y,\mathcal C,\nu)$ respectively. Suppose that there exists a sequence $\{t_n\}_{n\in\N}$ increasing to infinity such that  
		\begin{equation}\label{zbie}
			T_{t_n}\to P(\mathcal T)\quad \text{and}\quad S_{t_n}\to Q(\mathcal S)\quad \text{weakly},
			\end{equation}
		where $P,Q$ are probability measures with exponential decay on $\R$. If $P\neq Q$ then $\mathcal T$ and $\mathcal S$ are spectrally disjoint.
		\end{theorem}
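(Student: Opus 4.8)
The plan is to argue entirely on the spectral side, exploiting the combination of analyticity (coming from exponential decay) and atomlessness (coming from weak mixing). Recall that $\mathcal T$ and $\mathcal S$ are spectrally disjoint exactly when $\sigma_f\perp\sigma_g$ for all $f\in L^2(X,\mathcal B,\mu)$ and $g\in L^2(Y,\mathcal C,\nu)$ with $\int f\,d\mu=\int g\,d\nu=0$, so I would fix two such functions and show $\sigma_f\perp\sigma_g$. The first thing to record is that, since $\mathcal T$ (resp.\ $\mathcal S$) is weakly mixing, its Koopman group has no nonconstant eigenfunctions, hence $\sigma_f$ (resp.\ $\sigma_g$) is a \emph{continuous} (atomless) measure on $\R$. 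This is where weak mixing enters, and it is essential: the point $0$ always lies in the set where $\widehat P$ and $\widehat Q$ agree, so one needs a mechanism forbidding atoms in the spectral measures.

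Next I would transport the convergence hypothesis through the spectral isomorphism $\phi_{\mathcal T}$ of Remark \ref{specisom}. For a Borel set $B\subseteq\R$ put $F_B:=\phi_{\mathcal T}^{-1}(\raz_B)\in\R^{\mathcal T}_f$. Using the definition of $P(\mathcal T)$ together with Fubini's theorem (all integrands have modulus $\le 1$ against finite measures, so this is harmless) one computes
\[
\langle U^{\mathcal T}_{t_n}F_B,F_B\rangle=\int_B e^{\i t_n x}\,d\sigma_f(x)\ \xrightarrow{\,n\to\infty\,}\ \langle P(\mathcal T)F_B,F_B\rangle=\int_B \widehat P(x)\,d\sigma_f(x),
\]
where $\widehat P(x):=\int_\R e^{-\i t x}\,dP(t)$; the convergence is just the hypothesis $T_{t_n}\to P(\mathcal T)$ applied to $F=G=F_B$, and the analogous identity holds for $\sigma_g$, $\mathcal S$, $\widehat Q$. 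Since simple functions are dense in $L^1(\sigma_f)$ and $|e^{\i t_n x}|\equiv 1$, a routine approximation then upgrades this to
\[
\int_\R v(x)e^{\i t_n x}\,d\sigma_f(x)\ \longrightarrow\ \int_\R v(x)\widehat P(x)\,d\sigma_f(x)\qquad\text{for every }v\in L^1(\sigma_f),
\]
and likewise with $\sigma_g$ and $\widehat Q$.

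Now suppose for contradiction that $\sigma_f\not\perp\sigma_g$, and let $\eta$ be the absolutely continuous part of $\sigma_f$ with respect to $\sigma_g$. Then $\eta$ is a nonzero finite measure with $\eta\ll\sigma_g$ and $\eta\le\sigma_f$ (hence $\eta\ll\sigma_f$), and $\eta$ is continuous because $\sigma_f$ is. Applying the displayed convergence once with $v=d\eta/d\sigma_f\in L^1(\sigma_f)$ and once with $v=d\eta/d\sigma_g\in L^1(\sigma_g)$ shows that, for every Borel $B$, the sequence $\int_B e^{\i t_n x}\,d\eta(x)$ converges both to $\int_B\widehat P\,d\eta$ and to $\int_B\widehat Q\,d\eta$; by uniqueness of limits, $\widehat P=\widehat Q$ $\eta$-almost everywhere. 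On the other hand $P\neq Q$ forces $\widehat P\not\equiv\widehat Q$, since a finite Borel measure on $\R$ is determined by its Fourier transform; and by Remark \ref{anal} both $\widehat P$ and $\widehat Q$ are analytic, so $\widehat P-\widehat Q$ is analytic on $\R$ and not identically zero. Hence its zero set $Z=\{x\in\R:\widehat P(x)=\widehat Q(x)\}$ is discrete and closed, in particular countable, so $\eta(Z)=0$ by atomlessness of $\eta$; but $\widehat P=\widehat Q$ $\eta$-a.e.\ means $\eta$ is carried by $Z$, forcing $\eta=0$, a contradiction. Therefore $\sigma_f\perp\sigma_g$ for all zero-mean $f,g$, i.e.\ $\mathcal T$ and $\mathcal S$ are spectrally disjoint.

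\textbf{Main obstacle.}
The only genuinely substantive step is the identification in the second paragraph: recasting the weak operator convergence $U^{\mathcal T}_{t_n}\to P(\mathcal T)$ restricted to the cyclic space $\R^{\mathcal T}_f$ as the scalar convergence $\int_B e^{\i t_n x}\,d\sigma_f\to\int_B\widehat P\,d\sigma_f$ via $\phi_{\mathcal T}$. Everything after it is soft measure theory, and the fact used before it (continuity of spectral measures under weak mixing) is standard. One should be mildly careful with the sign/normalisation convention hidden in ``$T_{t_n}\to P(\mathcal T)$ weakly'': a different convention merely replaces $P,Q$ by their reflections $\check P(\,\cdot\,)=P(-\,\cdot\,)$, $\check Q$, which again have exponential decay and satisfy $\check P\neq\check Q$, so the argument is unaffected.
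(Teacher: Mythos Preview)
Your proof is correct and follows essentially the same route as the paper's: translate the weak operator convergence through the spectral isomorphism, obtain $\widehat P=\widehat Q$ almost everywhere with respect to a common nonzero atomless measure, and conclude via analyticity. The only cosmetic difference is that the paper first picks a nonzero $\rho\ll\sigma_{\mathcal T},\sigma_{\mathcal S}$ and then finds cyclic vectors $f,g$ with $\sigma_f=\sigma_g=\rho$, whereas you fix arbitrary zero-mean $f,g$ and extract the common part $\eta$ by Lebesgue decomposition; both arrive at the same contradiction.
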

		\begin{proof}
	Suppose by contradiction that $\mathcal T$ and $\mathcal S$ are not spectrally disjoint. Then there exists a measure $0\neq\rho\in \mathcal P(\R)$ such that $\rho\ll\sigma_{\mathcal T}$ and $\rho\ll\sigma_{\mathcal S}$. Since the flows $\mathcal T$ and $\mathcal S$ are weakly mixing, the measure $\rho$ is { non-atomic}.
	Then there are functions $f\in L^2(X,\mathcal B,\mu)$ and $g\in L^2(Y,\mathcal C,\nu)$ such that the operators $U_t^{\mathcal T}$ on $\R_{f}^{\mathcal T}$ and $U_t^{\mathcal S}$ on $\R_{g}^{\mathcal S}$ are isomorphic to an operator $V^\rho_t$ on $L^2(\R,\mathcal B(\R),\rho)$ for every $t\in\R$ via the 
%	Let $f:X\to\mathbb C$ and $g:Y\to\mathbb C$ be generators of spaces $\R_{\rho}^{\mathcal T}$ and $\R_{\rho}^{\mathcal S}$ respectively, that is
%	 \[
%	 \overline{\operatorname{span}}\{f\circ T_{t};\ t\in\R\}=\R_{\rho}^{\mathcal T}\ \text{ and }\ \overline{\operatorname{span}}\{g\circ S_{t};\ t\in\R\}=\R_{\rho}^{\mathcal S}.
%	 \]
%	 Moreover,
%	 \[
%	 \langle f\circ T_t,f\rangle = \int_\R e^{it}\,d\rho(t)\ \text{ and }\ \langle g\circ S_t,g\rangle = \int_\R e^{it}\,d\rho(t)\ \text{ for every }t\in\R
%	 \]
%	 and the map $f\circ T_t\mapsto e^{it}$ ($g\circ S_t\mapsto e^{it}$) defines
	  isomorphisms $\phi_\mathcal T$ and $\phi_{\mathcal S}$ respectively (see Remark \ref{specisom}).
	  % between $\R_{\rho}^{\mathcal T}$ ($\R_{\rho}^{\mathcal S}$) and $L^2(\R,\mathcal B(\R),\rho)$ which commutes with the action of $\mathcal T$ $(\mathcal S)$ and multiplication by $e^{it}$.
	 
	 Let $t_n$ be the sequence  given by the assumptions of the theorem.
	 Let also $F,G\in L^2(\R,\mathcal B(\R),\rho)$ be arbitrary. Then
	 	 \[
	 	 \begin{split}
	 	 \langle V_{t_n}^\rho F,G \rangle_{\rho}&=\langle\phi_{\mathcal T}^{-1}F\circ T_{t_n},\phi_{\mathcal T}^{-1}G \rangle_\mu	 \to \langle P(\mathcal T)(\phi_{\mathcal T}^{-1}F),\phi_{\mathcal T}^{-1}G\rangle_\mu\\
	 	 &= \int_\R \langle\phi_{\mathcal T}^{-1}F\circ T_{-t},\phi_{\mathcal T}^{-1}G\rangle_\mu\,dP(t)=\int_\R\int_\R e^{its}F(s)\overline{G(s)}\,d\rho(s) dP(t)\\
	 	 &=\int_\R \hat P(s)F(s)\overline{G(s)}\,d\rho(s)
	 	 =\langle\hat P(\cdot)F,G \rangle_\rho.
	 	 \end{split}
	 	 \]
%	 \[
%	 \begin{split}
%	 \langle V_{t_n}^\rho F,G \rangle_{\rho}&=\int_\R e^{it_ns}F(s)\overline{ G(s)}\,d\rho(s)=\int_X\phi_{\mathcal T}^{-1}\big(e^{it_n(\cdot)}F\big)\cdot\overline{\phi_{\mathcal T}^{-1}G}\,d\mu\\
%	 &=\int_X \phi_{\mathcal T}^{-1}F\circ T_{t_n}\cdot\overline{\phi_{\mathcal T}^{-1}G}\,d\mu(x)=\langle\phi_{\mathcal T}^{-1}F\circ T_{t_n},\phi_{\mathcal T}^{-1}G \rangle_\mu\\
%	 &\to \langle P(\mathcal T)(\phi_{\mathcal T}^{-1}F),\phi_{\mathcal T}^{-1}G\rangle_\mu= \int_\R \langle\phi_{\mathcal T}^{-1}F\circ T_{-t},\phi_{\mathcal T}^{-1}G\rangle_\mu\,dP(t)\\
%	 &=\int_\R\int_\R e^{its}F(s)\overline{G(s)}\,d\rho(s) dP(t)=\int_\R \hat P(s)F(s)\overline{G(s)}\,d\rho(s)\\
%	 &=\langle\hat P(\cdot)F,G \rangle_\rho.
%	 \end{split}
%	 \]
	 On the other hand, by replacing $\mathcal T$ with $\mathcal S$ we get 
	 \[
	 \langle V_{t_n}^\rho F,G \rangle_\rho\to\langle \hat Q(\cdot)F,G\rangle_\rho.
	 \]
	 Hence the operators of multiplying by $\hat P$ and $\hat Q$ are identical on $L^2(\R,\mathcal B(\R),\rho)$. Thus we obtain that $\hat P=\hat Q$ $\rho$-a.e. Since $\rho$ is a { non-atomic} measure, by Remark \ref{anal} we get that $\hat P=\hat Q$ everywhere on $\R$. This implies that measures $P$ and $Q$ are equal and yields a contradiction with the assumption of the theorem. 
	\end{proof}
%	
%The following criterion is a direct consequence of Theorem \ref{krytspek}.
%% (see also \eqref{opimi}).
%	\begin{corollary}\label{main1}
%		Let $\mathcal T=\{T_t\}_{t\in\R}$ and $\mathcal S=\{S_t\}_{t\in\R}$ be  weakly mixing flows on $(X,\mathcal B,\mu)$ and $(Y,\mathcal C,\nu)$ respectively.
%		Assume that for some real sequence $(a_n)_{n\in\N}$ we have
%		\[
%		T_{a_n}\to P(\mathcal T)\quad \text{and}\quad 
%		S_{a_n}\to Q(\mathcal S)
%		\]
%		for some measures $P,Q\in\mathcal P(\R)$ with exponential decay. Assume moreover, that $P\neq Q$.
%		Then $\mathcal T$ and $\mathcal S$ are spectrally disjoint.
%	\end{corollary}

\subsection{Spectral disjointness for special flows}\label{sec:spec}

The following result was proven in \cite{FrKuLem} and in more general version in \cite{BFr} and gives the description of the aforementioned limits under some natural assumptions. Recall that for a dynamical system $(X,\mathcal B,\mu, T)$ we say that $\{q_n\}_{n\in\N}$ is \emph{a rigidity sequence along a sequence of subsets} $\{W_n\}_{n\in\N}$ if for every measurable $A\subset \mathcal B$ we have 
	\[
	\lim_{n\to\infty}\mu((A\triangle T^{q_n}A)\cap W_n)=0.
	\]
The following auxiliary result will be useful in proving Theorem \ref{Main}. It is  proven in \cite{BFr}.
\begin{lemma}[see \cite{BFr}]\label{metr}
	Suppose that $(X,\mathcal B,\mu)$ is endowed with a metric $d$
	generating the $\sigma$-algebra~$\mathcal B$. If $\sup_{x\in
		W_n}d(T^{q_n}x,x)\to 0$, then $\{q_n\}_{n\in\N}$ is a rigidity
	sequence for $T$ along $\{W_n\}_{n\in\N}$ .
\end{lemma}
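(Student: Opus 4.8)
The plan is to deduce the result directly from Lemma \ref{proposition}-style density/approximation arguments combined with a dominated-convergence estimate. First I would recall that since $d$ generates the $\sigma$-algebra $\mathcal B$, the collection of open sets is dense in $\mathcal B$ for the pseudo-metric $A\mapsto\mu(A\triangle B)$; hence it suffices to verify the rigidity estimate $\mu((A\triangle T^{q_n}A)\cap W_n)\to 0$ for $A$ ranging over a dense algebra — say, finite unions of open balls — and then pass to a general $A\in\mathcal B$ by a standard $\varepsilon/3$ argument (approximating $A$ by a set $B$ from the dense family, noting that $T^{q_n}$ is measure-preserving so $\mu(T^{q_n}A\triangle T^{q_n}B)=\mu(A\triangle B)$ is small uniformly in $n$). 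The point of localizing to $W_n$ is that the error terms coming from this approximation step do not blow up, because intersecting with $W_n$ can only decrease measure.

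The second and main step is to control $\mu((A\triangle T^{q_n}A)\cap W_n)$ when $A$ is open. Fix such an $A$ and let $A_\eta=\{x\in A:\ d(x,X\setminus A)>\eta\}$ be the $\eta$-interior, so $\mu(A\setminus A_\eta)\to 0$ as $\eta\to 0$ by continuity of measure. Choose $n$ large enough that $\sup_{x\in W_n}d(T^{q_n}x,x)<\eta$. Then for $x\in W_n\cap A_\eta$ we have $T^{q_n}x\in A$ (since $T^{q_n}x$ stays within distance $\eta$ of a point whose $\eta$-neighbourhood lies in $A$), hence $x\notin A\triangle T^{q_n}A$; symmetrically, using that $T^{q_n}$ is invertible and measure preserving, one handles the points of $T^{q_n}A$. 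Therefore $(A\triangle T^{q_n}A)\cap W_n\subseteq (A\setminus A_\eta)\cup\big(T^{q_n}A\setminus T^{q_n}A_\eta\big)=(A\setminus A_\eta)\cup T^{q_n}(A\setminus A_\eta)$, so by invariance of $\mu$ under $T^{q_n}$,
\[
\mu\big((A\triangle T^{q_n}A)\cap W_n\big)\ \leq\ 2\mu(A\setminus A_\eta).
\]
Letting first $n\to\infty$ and then $\eta\to 0$ gives $\mu((A\triangle T^{q_n}A)\cap W_n)\to 0$ for open $A$.

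Finally I would assemble the two steps: given arbitrary $A\in\mathcal B$ and $\varepsilon>0$, pick an open (or finite-union-of-balls) set $B$ with $\mu(A\triangle B)<\varepsilon$; then
\[
\mu\big((A\triangle T^{q_n}A)\cap W_n\big)\ \leq\ \mu(A\triangle B)+\mu\big((B\triangle T^{q_n}B)\cap W_n\big)+\mu(T^{q_n}B\triangle T^{q_n}A)\ <\ 2\varepsilon+o(1),
\]
using the triangle inequality for the symmetric difference and measure-preservation in the last term. Since $\varepsilon$ is arbitrary, the left side tends to $0$, which is exactly the statement that $\{q_n\}$ is a rigidity sequence for $T$ along $\{W_n\}$. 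I expect the only mildly delicate point to be the careful bookkeeping in the open-set step — in particular making sure the inclusion $(A\triangle T^{q_n}A)\cap W_n\subseteq (A\setminus A_\eta)\cup T^{q_n}(A\setminus A_\eta)$ is argued symmetrically for both halves of the symmetric difference — but no genuine obstacle arises, since $d$ generating $\mathcal B$ is exactly what makes the interior-approximation available.
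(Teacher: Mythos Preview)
The paper does not give its own proof of this lemma; it is simply quoted from \cite{BFr}. So there is nothing to compare your approach against, and your argument stands on its own.

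Your proof is correct and is the natural one: use regularity of a finite Borel measure on a metric space to reduce to open $A$, then observe that the uniform closeness $d(T^{q_n}x,x)<\eta$ on $W_n$ forces $T^{q_n}x\in A$ whenever $x$ lies in the $\eta$-interior $A_\eta$, yielding the bound $\mu\big((A\triangle T^{q_n}A)\cap W_n\big)\le 2\mu(A\setminus A_\eta)$. The final $\varepsilon/3$ step is clean.

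Two small remarks. First, the phrase ``Lemma~\ref{proposition}-style density/approximation arguments'' is a stray cross-reference: Lemma~\ref{proposition} is about Rauzy--Veech matrices and has nothing to do with metric approximation, so drop it. Second, your key inclusion
\[
(A\triangle T^{q_n}A)\cap W_n\ \subseteq\ (A\setminus A_\eta)\cup T^{q_n}(A\setminus A_\eta)
\]
is argued as if $T^{q_n}A$ denotes the preimage $(T^{q_n})^{-1}A$ (so that $x\in T^{q_n}A$ means $T^{q_n}x\in A$); under that reading both halves of the symmetric difference work exactly as you say. If the image convention is intended instead, the hypothesis controls $T^{q_n}x$ for $x\in W_n$ but not $T^{-q_n}x$, and one half of the inclusion needs the roles of $q_n$ and $-q_n$ swapped. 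This is a harmless bookkeeping issue (in the applications one has $\mu(W_n)\to1$, and without the $W_n$ the two conventions give the same measure by invariance), but you should state explicitly which convention you are using.
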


	\begin{theorem}[see \cite{FrLem5}]\label{main}
		Suppose that there exists a sequence $\{W_n\}_{n\in\N}$ of measurable subsets of $X$, increasing sequence $\{q_n\}_{n\in\N}$ of natural numbers and real sequence $\{a_n\}_{n\in\N}$,such that following conditions are satisfied:
		\begin{equation}\label{wl1}
			\mu(W_n)\to 1,
		\end{equation}
%		\begin{equation}\label{wl2}
%			\mu(W_n\triangle T^{-1}W_n)\to 0,
%		\end{equation}
		\begin{equation}\label{wl3}
			\{q_n\}_{n\in\N}\text{ is a rigidity sequence for $T$ along }\{W_n\}_{n\in\N},
		\end{equation}
		%\begin{equation}\label{wl4}
		%\{q_n'\}_{n\in\N}\text{ is a rigidity sequence for $T$ along }\{W_n\}_{n\in\N},
		%\end{equation}
		\begin{equation}\label{wl5}
			\Big\{\int_{W_n}|S_{q_n}(f)(x)-a_n|^2d\mu(x)\Big\}_{n\in\N}\text{ is bounded},		\end{equation}
%		\begin{equation}\label{wl6}
%			\Big\{\int_{W_n}|f_n'(x)|^2d\mu(x)\Big\}_{n\in\N}\text{ is bounded for }f_n'=f^{(2q_n)}-2a_n,
%		\end{equation}
		\begin{equation}\label{wl7}
			(S_{q_n}(f)(x)-a_n)_*(\mu_{W_n})\to P\text{ weakly in }\mathcal{P}(\R)\footnote{Throughout this paper for any measure $\mu$ we denote by $\mu_A$ the conditional measure on the set $A$, while by $\mu|_A$ we denote the restriction of the measure $\mu$ to the set $A$ }.
		\end{equation}
		Then
		\[
		T_{a_n}^f\to P(\mathcal T^f),
		\]
		up to a subsequence.
	\end{theorem}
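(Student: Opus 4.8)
The plan is to establish the weak convergence $\langle F\circ T^f_{a_n},G\rangle_{\mu^f}\to\langle P(\mathcal T^f)F,G\rangle_{\mu^f}$ along a subsequence, for all $F,G\in L^2(X^f,\cB^f,\mu^f)$; recall that by definition $\langle P(\mathcal T^f)F,G\rangle_{\mu^f}=\int_\R\langle F\circ T^f_{-t},G\rangle_{\mu^f}\,dP(t)$. Since $T^f_{a_n}$ acts isometrically and $P(\mathcal T^f)$ is a contraction, it suffices to treat $F,G$ in a linearly dense family, and I would use $F=\raz_A\otimes\psi$, $G=\raz_B\otimes\chi$ with $A,B\in\cB$ and $\psi,\chi$ bounded continuous on $\R$ (all formulae read on $X^f$). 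First I would discard the part of each inner product lying over $X\setminus W_n$: writing $E^f:=\{(x,s)\in X^f:x\in E\}$, one has $\mu^f\bigl((X\setminus W_n)^f\bigr)=\int_{X\setminus W_n}f\,d\mu\to0$ by \eqref{wl1} and absolute continuity of the integral (using $f\in L^1_+$), so the whole computation can be confined to $W_n^f$.

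Next I would linearise the time map. Put $r_n(x):=a_n-S_{q_n}(f)(x)$ and pass to the suspension model of $\mathcal T^f$ — $X^f\cong X\times\R/{\sim}$ with $(x,u)\sim(Tx,u-f(x))$ and vertical flow — in which one has the exact identity $T^f_{a_n}(x,s)=\bigl(T^{q_n}x,\,s+r_n(x)\bigr)$. On a set of $\mu$--measure tending to $1$ I would then: (a) use \eqref{wl5} together with positivity of $f$ to bound by a constant the number of copies of the roof by which the orbit overshoots $T^{q_n}x$, so that only finitely many ``levels'' intervene; (b) use \eqref{wl5} for uniform integrability of $\{r_n\}$ under $\mu_{W_n}$; and (c) use the rigidity \eqref{wl3} for $T$ — and, for the level bookkeeping, rigidity of $f$ obtained by approximating $f$ from \eqref{wl3} after truncation at a large level, with $f\in L^1_+$ controlling the tail — together with continuity of $\psi$, to replace $T^{q_n}x$ by $x$ inside $F$ up to an error small in $L^1(W_n^f)$. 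With $\Theta(x,t):=\int_0^{f(x)}F\bigl(T^f_t(x,s)\bigr)\overline{G(x,s)}\,ds$, this rewrites the $W_n$--part of $\langle F\circ T^f_{a_n},G\rangle_{\mu^f}$ as $\int_{W_n}\Theta(x,r_n(x))\,d\mu(x)+o(1)$, while a direct computation gives $\langle P(\mathcal T^f)F,G\rangle_{\mu^f}=\int_X\int_\R\Theta(x,-t)\,dP(t)\,d\mu(x)$.

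It then remains to prove
\[
\int_{W_n}\Theta\bigl(x,r_n(x)\bigr)\,d\mu(x)\ \longrightarrow\ \int_X\int_\R\Theta(x,-t)\,dP(t)\,d\mu(x),
\]
and to conclude: once this is known, compactness of the Markov operators on $L^2(\mu^f)$ in the weak operator topology yields a convergent subsequence of $\{T^f_{a_n}\}$, and the computation above forces every limit point to be $P(\mathcal T^f)$, proving the statement up to a subsequence. I would attack the displayed convergence by feeding \eqref{wl7} — i.e.\ $(-r_n)_*\mu_{W_n}=(S_{q_n}(f)-a_n)_*\mu_{W_n}\to P$ — into the integral, with \eqref{wl5} supplying the uniform integrability that makes the weak limit usable against the continuous (but not compactly supported) integrand and with the finitely--many--levels reduction keeping $\Theta(x,\cdot)$ a bounded continuous function of its second variable, locally in $x$.

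I expect the step just described to be the real obstacle. The difficulty is that \eqref{wl7} controls only the \emph{marginal} law of $r_n$, whereas in $\int_{W_n}\Theta(x,r_n(x))\,d\mu$ the vertical displacement $r_n(x)=a_n-S_{q_n}(f)(x)$ is correlated with the base point $x$ on which $\Theta$ also depends — for $T$ an irrational rotation $r_n$ is outright a deterministic function of $x$ — so the passage to the limit is \emph{not} formal. Breaking this base--fibre correlation, by combining the rigidity \eqref{wl3} with the distributional input \eqref{wl7} (this is precisely why the criterion carries the separate $L^2$--bound \eqref{wl5}), is the technical heart of the argument. A lesser, purely routine nuisance is the unboundedness of the roofs in Theorem~\ref{thm:1}, which is absorbed throughout by truncation together with $f\in L^1_+$.
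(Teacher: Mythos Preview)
The paper does not prove this theorem; it is quoted from \cite{FrLem5} (with the more general version in \cite{BFr}), so there is no in-paper proof to compare against. Your outline is the standard route and matches how the cited references proceed: reduce to product-type test functions, use the identity $T^f_{a_n}(x,s)=\bigl(T^{q_n}x,\,s+r_n(x)\bigr)$ in the suspension, replace $T^{q_n}x$ by $x$ via \eqref{wl3}, and reduce to the limit of $\int_{W_n}\Theta(x,r_n(x))\,d\mu$.

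You correctly locate the crux --- the base--fibre correlation between $x$ and $r_n(x)$ --- but you stop short of the mechanism that breaks it, and this is the one genuine gap in the proposal. The missing idea is that rigidity forces \emph{asymptotic $T$-invariance of $r_n$}, and ergodicity of $T$ then upgrades this to the needed asymptotic independence. Concretely: from the cocycle identity, $r_n(Tx)-r_n(x)=f(x)-f(T^{q_n}x)$, and \eqref{wl3} (applied to level sets of $f$, using $f\in L^1_+$) gives $f\circ T^{q_n}-f\to 0$ in measure on $W_n$. By \eqref{wl5} the laws $\bigl(x,r_n(x)\bigr)_*\mu_{W_n}$ on $X\times\R$ are tight; pass to a subsequential limit $\rho$. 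For $\phi\in L^\infty(X)$ and Lipschitz $\psi$ one gets, combining the previous display with $\mu(W_n\triangle T^{-1}W_n)\to 0$,
\[
\int_{X\times\R}\phi(Tx)\,\psi(t)\,d\rho=\int_{X\times\R}\phi(x)\,\psi(t)\,d\rho,
\]
so $\rho$ is $(T\times\mathrm{id})$-invariant with $X$-marginal $\mu$ and $\R$-marginal $(t\mapsto -t)_*P$ by \eqref{wl7}. Ergodicity of $T$ forces $\rho=\mu\otimes (t\mapsto -t)_*P$, which is exactly your displayed convergence. Note that this step uses ergodicity of $T$; it is not listed among \eqref{wl1}--\eqref{wl7} but is in force in every application in the paper (rotations and a.e.\ IET), and is implicitly assumed in the cited references. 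Once you add this argument, your plan is complete and essentially coincides with the literature proof; the ``up to a subsequence'' in the statement is precisely the passage to a subsequence along which the joint law $(x,r_n(x))_*\mu_{W_n}$ converges.
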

We will now state a crucial corollary, which is the most important tool in proving spectral disjointness of different rescalings of a special flow $\{T_t^f\}_{t\in\R}$. For a measure $P\in \mathcal{P}(\R) $ and $w\in \R$ we denote by $Res_w(P)\in \mathcal{P}(\R)$ the measure given by $[Res_w(P)](A)=P(w\cdot A)$. Note that if $P$ is an absolutely continuous measure with density $x\mapsto f(x)$ then $Res_w(P)$ is also an absolutely continuous measure with density $x\mapsto wf(w\cdot x)$.
{\begin{corollary}\label{cor:maintool} Let $(T_t^f)$ be a weakly mixing special flow on probability standard Borel space $(X^f,\mathcal B^f,\mu^f)$ and fix $K,L\in \N$, $K\neq L$. Let $\{W_n\}_{n\in \N}$, $\{q_n\}_{n\in \N}$ and 
$\{a_n\}_{n\in \N}$
be such that \eqref{wl1}-\eqref{wl5} hold for $\{wq_n\}_{n\in\N}$ and $\{wa_n\}_{n\in\N}$, where $w\in\{K,L\}$. Assume moreover that
\be\label{eq:convme}
\begin{split}
&(S_{Kq_n}(f)(x)-Ka_n)_*(\mu_{W_n})\to P_K\text{ weakly in }\mathcal{P}(\R)\quad\text{and}\\
&(S_{Lq_n}(f)(x)-La_n)_*(\mu_{W_n})\to P_L\text{ weakly in }\mathcal{P}(\R),
\end{split}
\ee
where both $P_K$ and $P_L$ have exponential decay. If $Res_K(P_K)\neq Res_L(P_L)$ then $\mathcal T_{K}^f=\{T_{Kt}^f\}_{t\in\R}$ and $\mathcal T_L^f=\{T_{Lt}^f\}_{t\in\R}$ are spectrally disjoint.
\end{corollary}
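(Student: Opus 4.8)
The plan is to reduce Corollary \ref{cor:maintool} directly to Theorem \ref{krytspek} applied to the two flows $\mathcal T_K^f$ and $\mathcal T_L^f$, using Theorem \ref{main} to produce the weak limits of Koopman operators required by \eqref{zbie}. First I would fix $K,L$ and observe that, since \eqref{wl1}--\eqref{wl5} are assumed to hold for the rescaled data $\{Kq_n\}_{n\in\N}$, $\{Ka_n\}_{n\in\N}$ and for $\{Lq_n\}_{n\in\N}$, $\{La_n\}_{n\in\N}$, together with the convergence \eqref{eq:convme}, the hypotheses of Theorem \ref{main} are satisfied separately for the cocycle of $f$ over $T$ with rigidity times $Kq_n$ (resp. $Lq_n$), centering constants $Ka_n$ (resp. $La_n$) and limit measure $P_K$ (resp. $P_L$). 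Hence, passing to a common subsequence of $n$, Theorem \ref{main} gives
\[
T^f_{Ka_n}\to P_K(\mathcal T^f)\quad\text{and}\quad T^f_{La_n}\to P_L(\mathcal T^f)\qquad\text{weakly.}
\]

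Next I would re-read these two convergences as statements about the single flow $\{T^f_t\}_{t\in\R}$ evaluated along the common time sequence $t_n:=KLa_n$, so as to bring them into the exact form of \eqref{zbie} for the two flows $\mathcal T_K^f$ and $\mathcal T_L^f$. Concretely, $T^f_{Ka_n}=(T_K^f)_{a_n/L\cdot L}$; the clean way is to note that for a flow $\mathcal U=\{U_t\}$ and a scalar $w$, the convergence $U_{ws_n}\to R(\mathcal U)$ weakly is equivalent to $(U^{(w)})_{s_n}\to R(\mathcal U^{(w)})$ weakly, where $U^{(w)}_t:=U_{wt}$, because the Koopman operators of $\mathcal U^{(w)}$ are those of $\mathcal U$ reindexed, and the integral operator $R(\mathcal U^{(w)})$ is exactly $[Res_w(R)](\mathcal U)$ — this is the content of the remark preceding the corollary about $Res_w$ acting on densities. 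Applying this with $\mathcal U=\mathcal T^f$, $w=K$, $s_n=a_n$ turns $T^f_{Ka_n}\to P_K(\mathcal T^f)$ into
\[
(T_K^f)_{a_n}\to P_K(\mathcal T^f),
\]
and one checks that as an operator on $L^2$, $P_K(\mathcal T^f)$ coincides with $[Res_K(P_K)](\mathcal T_K^f)$; similarly $(T_L^f)_{a_n}\to [Res_L(P_L)](\mathcal T_L^f)$. Thus \eqref{zbie} holds for $\mathcal T=\mathcal T_K^f$, $\mathcal S=\mathcal T_L^f$ along $t_n=a_n$, with $P:=Res_K(P_K)$ and $Q:=Res_L(P_L)$.

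It then remains to verify the remaining hypotheses of Theorem \ref{krytspek}: that $\mathcal T_K^f$ and $\mathcal T_L^f$ are weakly mixing, and that $P$ and $Q$ have exponential decay. Weak mixing of $\mathcal T_K^f$ and $\mathcal T_L^f$ follows from weak mixing of $\mathcal T^f$, since a constant time-change $t\mapsto wt$ of a weakly mixing flow is again weakly mixing (the maximal spectral type is continuous, and rescaling the real line by $w$ preserves continuity). Exponential decay of $Res_K(P_K)$ and $Res_L(P_L)$ is immediate from exponential decay of $P_K$ and $P_L$: if $P_K((-\infty,-t)\cup(t,\infty))<ce^{-bt}$, then $[Res_K(P_K)]((-\infty,-t)\cup(t,\infty))=P_K((-\infty,-Kt)\cup(Kt,\infty))<ce^{-bKt}$, which still decays exponentially (with rate $bK>0$). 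Finally, the hypothesis $Res_K(P_K)\neq Res_L(P_L)$ is precisely $P\neq Q$, so Theorem \ref{krytspek} applies and yields $\sigma_{\mathcal T_K^f}\perp\sigma_{\mathcal T_L^f}$, i.e.\ $\mathcal T_K^f$ and $\mathcal T_L^f$ are spectrally disjoint.

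The step I expect to be the only genuinely delicate point is the bookkeeping identifying the weak operator limit $P_K(\mathcal T^f)$ with $[Res_K(P_K)](\mathcal T_K^f)$ as operators on $L^2(X^f,\mathcal B^f,\mu^f)$; this is a matter of unwinding the definition of the integral operator, $\langle R(\mathcal U)f,g\rangle=\int_\R\langle f\circ U_{-t},g\rangle\,dR(t)$, and performing the change of variables $t\mapsto t/K$ in the integral, but one must be careful that the subsequence produced by Theorem \ref{main} can be chosen simultaneously for the $K$-data and the $L$-data (first extract along $K$, then refine along $L$) and that the sequence $a_n$ is the same in both convergences — which it is by hypothesis. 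Everything else is soft.
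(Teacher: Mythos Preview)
Your proposal is correct and follows essentially the same route as the paper: apply Theorem \ref{main} with the data $\{wq_n\},\{wa_n\}$ for $w\in\{K,L\}$ to obtain $T^f_{Ka_n}\to P_K(\mathcal T^f)$ and $T^f_{La_n}\to P_L(\mathcal T^f)$, rewrite these as $(\mathcal T_K^f)_{a_n}\to Res_K(P_K)(\mathcal T_K^f)$ and $(\mathcal T_L^f)_{a_n}\to Res_L(P_L)(\mathcal T_L^f)$ via the change of variables $t\mapsto t/w$ in the integral operator, and then invoke Theorem \ref{krytspek}. The paper compresses all of this into two lines; you have spelled out the auxiliary checks (weak mixing of the rescaled flows, exponential decay of $Res_w(P_w)$, common subsequence extraction) that the paper leaves implicit, but there is no difference in strategy. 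One cosmetic slip: you introduce $t_n:=KLa_n$ and then never use it, correctly settling on $t_n=a_n$ instead; just drop the former.
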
}
\begin{proof}
%Note first that $\{Kq_n\}_{n\in\N}$ and $\{Lq_n\}_{n\in\N}$ are rigidity sequences along $\{W_n\}_{n\in\N}$. Indeed, by $T$-invariance of$\mu$ for every set $A\in\mathcal B$ we have
%\[
%\mu((A\triangle T^{Kq_n})\cap W_n)\le\sum_{i=0}^{K-1}\mu((T^{iq_n}A\triangle T^{(i+1)q_n}A)\cap W_n)=\sum_{i=0}^{K-1}\mu((A\triangle T^{q_n}A)\cap T^{-iq_n}W_n).
%\]
%In view of \eqref{wl1} and \eqref{wl3} for every $i=0,\ldots,K-1$ we have
%\[
%\begin{split}
%\mu((A\triangle T^{q_n}A)\cap T^{-iq_n}W_n)=
%\end{split}
%\]

 In view of Theorem \ref{main} we have the following convergences
\[
T_{Ka_n}^f\to P_K(\mathcal T)=Res_K(P_K)(\mathcal T_K^f)\quad\text{and}\quad T_{La_n}^f\to P_L(\mathcal T)=Res_L(P_L)(\mathcal T_L^f).
\]
%\[
%\mu_{Ka_n}^f\to\int_{\R}\mu_{-t}\,dP_K(t)=\int_{\R}\mu_{-Kt}\,dRes_K(P_K)(t)
%\]
%and
%\[
%\mu_{La_n}^f\to\int_{\R}\mu_{-t}\,dP_L(t)=\int_{\R}\mu_{-Lt}\,dRes_L(P_L)(t).
%\]
Since $Res_K(P_K)\neq Res_L(P_L)$, Theorem \ref{krytspek} yields the desired result.
\end{proof}
\begin{remark} Note that assuming weak mixing in the above corollary is natural. Indeed, if a flow $\{T_t\}_{t\in\R}$ has a non-zero eigenvalue $a\in\R$ then $KLa$ is a common eigenvalue for $\{T_{Kt}\}_{t\in\R}$ and $\{T_{Lt}\}_{t\in\R}$ for every $K,L\in\N$. Thus the natural powers of a non-trivial ergodic flow which is not weakly mixing are never pairwise spectrally disjoint.
	\end{remark}
By definition of weak convergence of measures we also have the following remark:
\begin{remark}\label{rem:messet} It follows that if for $K\in \N$ and $P_K$ is as in Corollary \ref{cor:maintool}, then for every $b\in\R$ modulo a countable set, we have
$$
Res_K(P_K)((-\infty,b]\cup[b,\infty))=\lim_{n\to +\infty}\mu\left(\{x:\in \T\;:\; |S_{Kq_n}(f)(x)-Ka_n|\geq Kb\}\right).
$$
\end{remark}

\section{Flows over rotations and logarithmic singularity}\label{sec:logsym}
In this section we will apply the criterion from Section \ref{sec:spec} for special flows over irrational rotations and under roof function with one symmetric logarithmic singularity (which we assume WLOG is placed at zero). { More precisely, the roof function, $f:\T\to\R_+$, is given by:
\be\label{eq:logsym}
f(x)=-\log x-\log (1-x)+g(x),
\ee
where $g\in C^3(\T)$, $g>0$. All the reasoning which follows works also for any function of the above form multiplied by an arbitrary constant $C_f>0$.} The results of this section are largely based on the results presented by C.\ Ulcigrai in \cite{CU} for general IETs.
\bigskip
\subsection{Diophantine condition on the base rotation:}
Recall that for $\alpha\in \T$, $\{a_n\}_{n\in \N}$ denotes the continued fraction sequence of $\alpha$. For $K,L
\in \N$ we say that $\alpha\in \cC_{K,L}$ if there exists an increasing sequence $(n_k)_{k\in \N}$ and a constant $c>0$ such that 
\be\label{eq:diophcon}
a_{n_k+1-n}\leq e^{cn}\;\;\; \text{  for every }\;\; 1\leq n\leq n_k\;\;\text{and} \;\;\;200(K^2+L^2)>a_{n_k+1}>100(K^2+L^2).
\ee
Let $\cC:=\bigcap_{K,L\in \N} \cC_{K,L}$.
%\begin{remark}\label{rem:diophcon} The first part of the above diophantine condition was introduced for general IET's by C.\ Ulcigrai in \cite{CU}. As in \cite{CU}, this condition is crucial in fine estimates of Birkhoff sums of $f'$. However, in our case we need such estimates for ALL $x\in \T$, whereas in \cite{CU}, the additional {\em balance} condition for IET's  allows to control only a portion of space.
%\end{remark}
%We have the following lemma which is a  consequence of Lemma 2.2 in \cite{CU}:
\begin{lemma}\label{lem:fms}
With $\cC$ defined as above we have $\lambda(\cC)=1$.
\end{lemma}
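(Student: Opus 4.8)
The plan is to show that $\lambda(\cC_{K,L}) = 1$ for each fixed pair $K, L \in \N$, and then conclude by a countable intersection. Since $\cC_{K,L}$ depends only on $M := 200(K^2+L^2)$ (and the associated lower bound $M/2$), it suffices to prove: for every integer $a_\ast$, the set of $\alpha$ whose continued fraction digits satisfy $a_{n_k+1} = a_\ast$ for infinitely many $k$, \emph{and} satisfy the sub-exponential growth bound $a_{n_k+1-n} \le e^{cn}$ for $1 \le n \le n_k$ (for some $c > 0$ depending on $\alpha$), has full Lebesgue measure; applying this with any $a_\ast$ strictly between $M/2$ and $M$ then gives $\lambda(\cC_{K,L}) = 1$.

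The key input is ergodicity of the Gauss map $G(x) = \{1/x\}$ with respect to the Gauss measure $\mu_G$, which is equivalent to Lebesgue measure. First I would recall the standard metric fact: for $\mu_G$-a.e. $\alpha$, one has $\frac{1}{n}\log q_n \to \frac{\pi^2}{12\log 2}$ (L\'evy's constant), and in particular $\log a_n = o(n)$ and $\log q_n = O(n)$ almost everywhere; this already furnishes, for a.e. $\alpha$ and every increasing sequence $(n_k)$, a constant $c > 0$ with $a_{n_k+1-n} \le e^{cn}$ for all $1 \le n \le n_k$ — indeed $a_m \le e^{cm}$ eventually for \emph{all} $m$, and one absorbs the finitely many exceptional small indices by enlarging $c$. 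So the sub-exponential constraint in \eqref{eq:diophcon} costs nothing: it holds on a full-measure set regardless of the choice of $(n_k)$.

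What remains is to produce, for a.e. $\alpha$, infinitely many indices $m$ with $a_{m+1} = a_\ast$. This is immediate from ergodicity of $G$: the event $\{a_1 = a_\ast\} = \{x \in (\tfrac{1}{a_\ast+1}, \tfrac{1}{a_\ast}]\}$ has positive Gauss measure, so by the pointwise ergodic theorem (or just Poincar\'e recurrence) $\mu_G$-a.e. $\alpha$ has $G^{m}\alpha \in \{a_1 = a_\ast\}$, i.e. $a_{m+1}(\alpha) = a_\ast$, for infinitely many $m$; set $(n_k)$ to be this sequence of indices. Combining the two full-measure sets gives $\lambda(\cC_{K,L}) = 1$, and finally $\lambda(\cC) = \lambda\big(\bigcap_{K,L\in\N}\cC_{K,L}\big) = 1$ as a countable intersection of full-measure sets.

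The only mildly delicate point — and I expect it to be the main thing to get right rather than a genuine obstacle — is checking that one single constant $c$ works simultaneously for \emph{all} scales $1 \le n \le n_k$ along the whole sequence $(n_k)$: near the bottom end ($n$ close to $n_k$) the digit $a_{n_k+1-n} = a_1$ could be arbitrarily large for particular $\alpha$, but it is a fixed finite constant $a_1(\alpha)$, so one just takes $c \ge \max\{\,\log a_m(\alpha) : m \le m_0\,\}$ for the threshold $m_0$ beyond which $a_m \le e^{m}$, say, holds; this is a property of $\alpha$ alone and is harmless. No quantitative Diophantine estimate beyond the qualitative metric theory of continued fractions is needed.
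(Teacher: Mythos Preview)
Your argument contains a genuine gap: you have the direction of the exponential bound reversed. The condition in \eqref{eq:diophcon} reads $a_{n_k+1-n}\le e^{cn}$ for $1\le n\le n_k$; taking $n=1$ gives $a_{n_k}\le e^{c}$, taking $n=2$ gives $a_{n_k-1}\le e^{2c}$, and so on. In other words, the constraint bounds the digits \emph{immediately preceding} position $n_k+1$, with the bound loosening as one moves backward toward $a_1$. Your input $a_m\le e^{cm}$ (from L\'evy/Borel--Bernstein) bounds $a_m$ in terms of its own index $m$, so it says nothing about $a_{n_k}$ beyond $a_{n_k}\le e^{cn_k}$ --- useless for the requirement $a_{n_k}\le e^{c}$. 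Since a.e.\ $\alpha$ has $\limsup_m a_m=\infty$, one certainly cannot have $a_{n_k}\le e^{c}$ along an \emph{arbitrary} sequence $(n_k)$; the sequence must be chosen so that the digits just before $n_k+1$ are controlled. Your ``mildly delicate point'' paragraph addresses only the harmless end $n\approx n_k$ (digit index $\approx 1$) and misses the real obstruction at $n=1$.

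This is exactly why the paper passes to the natural extension $\hat G$ of the Gauss map. The backward-in-time condition on $a_{n_k},a_{n_k-1},\ldots$ becomes a condition on the \emph{second coordinate} of $\hat G^{n_k}(\alpha,\beta)$, and one can write down a fixed target set $\mathcal D_c\subset(0,1)^2$ encoding simultaneously ``$a_1$ in the desired range'' (future condition) and ``$a_{-n}\le e^{c(n+1)}$ for all $n\ge 0$'' (past condition). A short Borel--Cantelli estimate shows $\mathcal D_c$ has positive $\hat G$-invariant measure for $c$ large, and ergodicity of $\hat G$ then gives infinitely many visit times $n_k$. Your one-sided ergodic argument cannot see the past, so it cannot couple the two requirements; either pass to $\hat G$ as the paper does, or replace your second paragraph with an argument that explicitly produces infinitely many $m$ with $a_{m+1}=a_\ast$ \emph{and} $a_{m-j}\le e^{c(j+1)}$ for all $0\le j<m$ simultaneously.
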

\begin{proof}
Note that it is enough to show that for given $K,L\in\N$ we have $\la(\cC_{K,L})=1$. To prove this result we use the notion of natural extension $\hat G$ of Gauss map. That is we consider { the} map $\hat G$ on $\mathcal D:=(0,1)\setminus \Q\times(0,1)\setminus \Q$ given by the formula
\[
\hat G([0;a_1,a_2,\ldots],[0;a_0,a_{-1},\ldots])
=([0;a_2,a_3,\ldots],[0;a_1,a_0,a_{-1}\ldots]).
\]
This transformation preserves the measure $\la_{\hat G}$ given by the density $\frac{1}{\ln 2(1+\al^+\al^-)^2}$, where $\al^+=[0;a_1,a_2,\ldots]$ and $\al^-=[0;a_0,a_{-1},\ldots]$. In particular $\la_{\hat G}$ projects on each coordinate as a regular Gauss measure $\la_G=\frac{1}{\ln 2(1+x)}\,dx$. Moreover $\hat G$ with $\la_{\hat G}$ is an ergodic system (see i.e. \cite{UlSin}). 

Let
\[
\mathcal D_c:=\{([0;a_1,a_2,\ldots],[0;a_0,a_{-1},\ldots]);\ \forall_{n\ge 0}\,a_{-n}\le e^{c(n+1)}\ \text{ and }\ 100(K^2+L^2)<a_1<200(K^2+L^2)\}.
\]
Denote
 \[
 b:=\la_G\left(\{\al\in\mathbb T; a_{1}\ge200(K^2+L^2)\vee a_{1}\le100(K^2+L^2)\} \right)<1.
 \]	
Then 
\[
\la_{\hat G}(\mathcal D\setminus\mathcal D_c)\le b+\sum_{i=1}^{\infty}\ln(1+e^{-ci})\le b+\frac{e^{-c}}{1-e^{-c}}.
\]
By taking $c$ sufficiently large, we get that $\la_{\hat G}(\mathcal D_c)>0$. By the ergodicity of $\hat G$ it implies that for almost every $(\al,\beta)\in\mathcal D$ there exists an increasing sequence $\{n_k\}_{k\in\N}$ such that $\hat G^{n_k}(\al,\beta)\in \mathcal D_c$. This implies that $\al\in \cC$ with constant $c$ and thus concludes the proof.

\end{proof}

\subsection{Estimates on Birkhoff sums of  derivatives}
In this section we will estimate Birkhoff sums of the roof function (and { its} derivaties). One of the main tools is the the Denjoy-Koksma inequality.  For $n\in \N$ let 
\be\label{eq:dist111}
x^-_{n,min}:=\min_{0\leq j<q_n}x+j\alpha\;\;\text { and } \;\;
x^+_{n,min}:=\min_{0\leq j<q_n}1-(x+j\alpha)
\ee
considered on $[0,1)\subset \R$.
Let moreover $x_{n,min}:=\min(x^+_{n,min},x^-_{n,min})$. 

\textbf{General Denjoy-Koksma type estimates.}
The two lemmas below hold for every irrational $\alpha$.
\begin{lemma}\label{lem:DK} There exists a constant $C'>0$ such that for every $n\in \N$
$$
|S_{q_n}(f')(x)|<C'\left(q_n+\frac{1}{x_{n,min}}\right).
$$
\end{lemma}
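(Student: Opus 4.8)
The plan is to write $f'(x)=h(x)+g'(x)$, where $h(x)=-\tfrac1x+\tfrac1{1-x}$ is the singular part and $g'\in C^2(\T)$. Since $g$ is periodic, $g'$ has bounded variation and $\int_\T g'=0$, so the Denjoy-Koksma inequality gives $|S_{q_n}(g')(x)|\le \mathrm{Var}_\T(g')$ for all $n$ and $x$; this is a constant and is absorbed into the term $q_n$. Everything therefore reduces to proving $|S_{q_n}(h)(x)|\le C''\big(q_n+\tfrac1{x_{n,min}}\big)$.

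For the singular part the crucial observation is that $h$ is strictly increasing on $(0,1)$, from $-\infty$ to $+\infty$, and \emph{odd about $1/2$}: $h(1-x)=-h(x)$. Fix $n$, put $M:=q_n$, and let $h_M:=\max\{-M,\min\{M,h\}\}$ be the truncation of $h$ at height $M$. Then $h_M$ is again odd about $1/2$, hence $\int_\T h_M=0$, and $\mathrm{Var}_\T(h_M)\le 4q_n$; so Denjoy-Koksma yields $|S_{q_n}(h_M)(x)|\le 4q_n$. It remains to bound $S_{q_n}(h-h_M)(x)$. Now $h-h_M$ vanishes outside the two arcs $(0,x_M)$ and $(1-x_M,1)$ around the singularity, where $x_M$ solves $h(x_M)=-M$; from $1/x_M=M+1/(1-x_M)>M+1$ one gets $x_M<1/(M+1)<1/q_n$. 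On $(0,x_M)$ one has $|h-h_M|\le|h|\le 1/x$, and symmetrically $|h-h_M|\le 1/(1-x)$ on $(1-x_M,1)$.

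The last ingredient is a counting statement for the orbit $\{x+j\alpha:0\le j<q_n\}$. By the best-approximation property of the convergents, $\|k\alpha\|\ge\|q_{n-1}\alpha\|\ge\frac1{q_n+q_{n-1}}\ge\frac1{2q_n}$ whenever $0<k<q_n$, so the $q_n$ points above are pairwise at distance $\ge\frac1{2q_n}$ on $\T$. Hence the arc $(0,x_M)$, of length $<1/q_n$, contains at most two of them; if it contains any, the smallest is exactly $x^-_{n,min}$, and a second one (if present) is $\ge\frac1{2q_n}$. Therefore
\[
\sum_{j:\,x+j\alpha\in(0,x_M)}\frac1{x+j\alpha}\ \le\ \frac1{x^-_{n,min}}+2q_n,
\]
and symmetrically the contribution of $(1-x_M,1)$ is $\le \frac1{x^+_{n,min}}+2q_n$. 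Summing, $|S_{q_n}(h-h_M)(x)|\le\frac1{x^-_{n,min}}+\frac1{x^+_{n,min}}+4q_n\le\frac2{x_{n,min}}+4q_n$, and adding the earlier bounds for $S_{q_n}(h_M)$ and $S_{q_n}(g')$ gives $|S_{q_n}(f')(x)|\le C'\big(q_n+\tfrac1{x_{n,min}}\big)$ with $C'$ depending only on $g$ (through $\mathrm{Var}_\T(g')$).

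I expect the genuinely delicate point to be the role of the symmetry $h(1-x)=-h(x)$: a direct estimate of $\sum_{j<q_n}1/\{x+j\alpha\}$ only yields $O(q_n\log q_n+1/x_{n,min})$, and the improvement to $O(q_n+1/x_{n,min})$ hinges precisely on this symmetry, which makes $\int_\T h_M=0$ so that Denjoy-Koksma contributes only the $O(q_n)$ variation term, while the separation estimate isolates the single orbit point that produces the unbounded $1/x_{n,min}$ part.
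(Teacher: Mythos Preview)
Your proof is correct and follows essentially the same approach as the paper: split $f'$ into the smooth part $g'$ (handled directly by Denjoy--Koksma) and the singular part $h(x)=-\tfrac1x+\tfrac1{1-x}$, truncate $h$, exploit the antisymmetry $h(1-x)=-h(x)$ so the truncated function has mean zero and variation $O(q_n)$, apply Denjoy--Koksma again, and then use the $\tfrac1{2q_n}$ orbit-spacing to control the at most one or two orbit points near the singularity that see the untruncated part. The only cosmetic difference is that the paper truncates \emph{spatially} by multiplying $h$ with $\chi_{[\frac1{4q_n},\,1-\frac1{4q_n}]}$, whereas you truncate \emph{in value} by capping at $\pm q_n$; both choices yield the same estimates, and your observation about why the symmetry is essential (otherwise one only gets $O(q_n\log q_n)$) is exactly the point.
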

\begin{proof} Notice that by \eqref{eq:logsym}, we have $f'=\frac{1}{1-x}-\frac{1}{x}+g'$, where $g'\in C^2(\T)$ and $\int_{\T}g'd\la=0$. By Denjoy-Koksma inequality for every $x\in \T$ and every $n\in \N$ 
\be\label{eq:DK1}
|S_{q_n}(g')(x)|<C'',
\ee
for some $C''>0$. Hence we will only deal with  $h(x):=\frac{1}{1-x}-\frac{1}{x}$.
Let $\bar{f}(x):=\chi_{[\frac{1}{4q_n},1-\frac{1}{4q_n}]}(x)h(x)$ for $x\in\T$. Since $\|q_{n-1}\alpha\|\geq \frac{1}{2q_n}$ (see \eqref{eq:q_n}), it follows that  
$$
\{x,x+\alpha,\ldots,x+(q_{n}-1)\alpha\}\cap \left[-\frac{1}{4q_{n}},\frac{1}{4q_{n}}\right]\subset \{x+j\alpha\},
$$
where $0\leq j< q_n$ is such that $\|x+j\alpha\|=x_{n,min}$ (notice that the set above might be empty). This implies that 
\be\label{eq:esDK}
|S_{q_n}(\bar{f})(x)-S_{q_n}(h)(x)|\leq \frac{1}{x_{n,min}}.
\ee
Moreover, the function $\bar{f}$ is of bounded variation, and hence by Denjoy-Koksma inequality, we have 
$$
|S_{q_n}(\bar{f})(x)-q_n\int_\T\bar{f}d\lambda|<{\rm Var}(\bar{f}).
$$
By the definition of $\bar{f}$ and $h$ it follows that $q_n\int_\T\bar{f}(x)d\lambda=q_n\int_{\frac{1}{4q_n}}^{1-\frac{1}{4q_n}}h(x)d \lambda=0$ and ${\rm Var}(\bar{f})\leq 16 q_n$. This and \eqref{eq:esDK} finish the proof.
\end{proof}

The following lemma estimates Birkhoff sums for higher order derivatives\footnote{For a  function $h:\T\to \R$ and $r\geq 1$, we denote by $h^{(r)}$ the $r$-th derivative of $h$.} of symmetric log:

\begin{lemma}\label{lem:DKlog} Let $h(x)=-\log x-\log (1-x)$. Then for every $r\geq 1$, $n\in \N$ and $x\in \T$, we have 
$$
\left|S_{q_n}(h^{(r)})(x)- (r-1)!\left[(-1)^{r}\left(\frac{1}{x^-_{n,min}}\right)^r+\left(\frac{1}{x^+_{n,min}}\right)^r\right]\right|<100(r-1)!\|q_{n-1}\alpha\|^{-r}.
$$
\end{lemma}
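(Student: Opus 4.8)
The plan is to reduce everything to the well-known exact formula for Birkhoff sums of the function $\psi(x)=1/x$ over the orbit of length $q_n$, using the three-distance structure of the points $\{x+j\alpha\}_{0\le j<q_n}$. First I would observe that $h(x)=-\log x-\log(1-x)$ splits as $h=h^-+h^+$ where $h^-(x)=-\log x$ and $h^+(x)=-\log(1-x)$, so that $h^{(r)}(x)=(r-1)!\,x^{-r}+(-1)^r(r-1)!\,(1-x)^{-r}$ for $r\ge 1$ (the sign discrepancy relative to the statement being absorbed into which of $x^{\pm}_{n,min}$ one labels). Thus it suffices to estimate $S_{q_n}(\psi_r)$ for $\psi_r(x)=x^{-r}$ and, by symmetry $x\mapsto 1-x$, this also handles the $(1-x)^{-r}$ term. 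So the whole lemma collapses to: for $\psi_r(x)=x^{-r}$,
\[
\Bigl|S_{q_n}(\psi_r)(x)-\frac{1}{(x^-_{n,min})^r}\Bigr|<C_r\,\|q_{n-1}\alpha\|^{-r}
\]
with $C_r$ a modest multiple of $1$ (here I have divided the stated inequality by $(r-1)!$).

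The key step is the standard gap estimate. Among the $q_n$ points $x,x+\alpha,\dots,x+(q_n-1)\alpha$ on the circle, exactly one — call it $x+j_0\alpha$ — lies closest to $0$, at distance $x^-_{n,min}$; and crucially, every \emph{other} point lies at distance at least $\|q_{n-1}\alpha\|\ge \tfrac{1}{2q_n}$ from $0$, since the differences of two such points are of the form $k\alpha$ with $|k|<q_n$, and $\min_{0<|k|<q_n}\|k\alpha\|=\|q_{n-1}\alpha\|$. Therefore in the sum $S_{q_n}(\psi_r)(x)=\sum_{j}(x+j\alpha)^{-r}$ (with arguments taken in $(0,1)$, i.e. really $\|x+j\alpha\|$ up to sign which disappears after raising to an even-handled power — one must be slightly careful and write $\psi_r$ as a function on $\T$ with the $1/\|\cdot\|^r$ profile near $0$), the term $j=j_0$ contributes exactly $(x^-_{n,min})^{-r}$, and the remaining $q_n-1$ terms are bounded by comparing the points to an arithmetic-progression-like spacing. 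Concretely, the points other than the extreme one can be grouped so that the $m$-th closest among them is at distance $\gtrsim m\|q_{n-1}\alpha\|$ from $0$ (this is exactly the three-distance/Ostrowski combinatorics: the return times to small neighbourhoods of $0$ force roughly evenly spaced points), giving
\[
\sum_{j\ne j_0}\frac{1}{\|x+j\alpha\|^r}\le 2\sum_{m\ge 1}\frac{1}{(m\|q_{n-1}\alpha\|)^r}=\frac{2\zeta(r)}{\|q_{n-1}\alpha\|^r}\le \frac{2\pi^2/6}{\|q_{n-1}\alpha\|^r}<4\|q_{n-1}\alpha\|^{-r}
\]
for $r\ge 1$; for $r=1$ the series diverges but is replaced by a harmonic sum truncated at $\sim q_n$ terms, yielding an extra $\log q_n\cdot\|q_{n-1}\alpha\|^{-1}$-type bound, which is still $<50\,q_n$ and absorbable since $\|q_{n-1}\alpha\|^{-1}\asymp q_n$ — here I would just track constants to land under the stated $100(r-1)!$. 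Doing this for both $h^-$ and $h^+$ and adding gives the claimed two-term main part with error $<100(r-1)!\,\|q_{n-1}\alpha\|^{-r}$.

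The main obstacle is the bookkeeping in the grouping argument: one must justify cleanly that, after removing the single closest point, the remaining $q_n-1$ points of the orbit have distances to $0$ that dominate a geometric-like (really arithmetic-like) sequence with ratio $\|q_{n-1}\alpha\|$, uniformly in $x$. This is where the three-distance theorem enters — the orbit $\{j\alpha\}_{0\le j<q_n}$ partitions the circle into intervals of at most three lengths, the two smallest being $\|q_{n-1}\alpha\|$ and $\|q_n\alpha\|\ge\|q_{n-1}\alpha\|$-comparable pieces — so that in any window $[-t,t]$ around $0$ there are at most $O(t/\|q_{n-1}\alpha\|)+O(1)$ of the points. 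Feeding this counting bound into an Abel-summation / layer-cake estimate of $\sum\|x+j\alpha\|^{-r}$ is routine once stated, but getting the explicit constant $100$ requires being a little careful about the $r=1$ boundary case and about the point(s) possibly lying on the $(1-x)$-side versus the $x$-side; I would handle the two sides separately so that each extreme point is counted once in exactly one of $x^-_{n,min},x^+_{n,min}$, and the leftover points on each side are bounded as above.
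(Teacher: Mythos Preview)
Your approach for $r\ge 2$ is correct and in fact cleaner than the paper's. The paper truncates $h_1^{(r)}$ at the second orbit point, applies Denjoy--Koksma to the truncated (BV) function, and bounds the integral and variation terms separately; your direct argument---order the points, use that consecutive gaps are $\ge\|q_{n-1}\alpha\|$, and compare $\sum_{m\ge 1}(m\|q_{n-1}\alpha\|)^{-r}$ with $\zeta(r)\|q_{n-1}\alpha\|^{-r}$---gives the same conclusion with less machinery. (Minor cleanup: for $h^-$ you should be summing $1/(x+j\alpha)^r$ with $x+j\alpha\in(0,1)$, not $1/\|x+j\alpha\|^r$; and the factor $2$ in your displayed bound is spurious since for $h^-$ there is only one ``side''.)

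The $r=1$ case, however, has a genuine gap. Your claim that the truncated harmonic sum is ``still $<50\,q_n$'' is false: $\sum_{m=1}^{q_n-1}(m\|q_{n-1}\alpha\|)^{-1}\asymp q_n\log q_n$, which is \emph{not} bounded by any fixed multiple of $\|q_{n-1}\alpha\|^{-1}\asymp q_n$. More to the point, the inequality in the lemma is simply \emph{false} for $h^-$ or $h^+$ taken individually when $r=1$: one has
\[
S_{q_n}\bigl((h^-)'\bigr)(x)+\frac{1}{x^-_{n,\min}}=-\sum_{k\ge 2}\frac{1}{y_k}\approx -q_n\int_{\|q_{n-1}\alpha\|}^1\frac{dy}{y}\asymp -q_n\log q_n,
\]
so you cannot ``do this for both $h^-$ and $h^+$ and add''. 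The $r=1$ bound holds only because the two divergent pieces cancel. The paper makes this explicit: it applies Denjoy--Koksma to $\tilde h_1=(h^-)'\chi_{[\|q_{n-1}\alpha\|,1]}$ and $\tilde h_2=(h^+)'\chi_{[0,1-\|q_{n-1}\alpha\|]}$ and then uses
\[
q_n\int_{\|q_{n-1}\alpha\|}^{1}(h^-)'\,d\lambda\;+\;q_n\int_{0}^{1-\|q_{n-1}\alpha\|}(h^+)'\,d\lambda=0,
\]
so only the variation terms (each $\lesssim\|q_{n-1}\alpha\|^{-1}$) survive. To fix your argument for $r=1$ you must either invoke this cancellation directly, or equivalently apply Denjoy--Koksma to the single BV function $\bigl(\tfrac{1}{1-y}-\tfrac{1}{y}\bigr)\chi_{[\|q_{n-1}\alpha\|,\,1-\|q_{n-1}\alpha\|]}$, whose integral vanishes by symmetry.
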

\begin{proof}Let $h_1(x)=-\log x$ and $h_2(x)=-\log(1-x)$. Then 
$h_1^{(r)}(x)=(-1)^r(r-1)!x^{-r}$ and $h_2^{(r)}(x)=(r-1)!(1-x)^{-r}$.
Assume first that $r\geq 2$. We will estimate Birkhoff sums of $h_1^{(r)}$, the estimates for $h_2^{(r)}$ are analogous. Let 
$0<x_{n,min}^-=x+j_1\alpha<x+j_2\alpha<\ldots<x+j_{q_n}\alpha<1$
be all the points in the $q_n$ orbit of $x$ in increasing order. Then 
$x+j_2\alpha\geq \|q_{n-1}\alpha\|$ (since $|j_2-j_1|<q_{n}$). Let
$\tilde{h}_{1,r}(z):=h_1^{(r)}(z)\chi_{[x+j_2\alpha,1]}(z)$. Then \be\label{eq:imt}
S_{q_n}(h_1^{(r)})(x)-h_1^{(r)}(x_{n,min}^-)=S_{q_n}(\tilde{h}_{1,r})(x)
\ee
and $\tilde{h}_{1,r}$ has bounded variation. So by Denjoy-Koksma inequality for $\tilde{h}_{1,r}$, we get 
\be\label{eq:DKfg}
|S_{q_n}(\tilde{h}_{1,r})(x)-q_n\int_\T \tilde{h}_{1,r}(z) d\lambda|<{\rm Var}( \tilde{h}_{1,r}).
\ee
By monotonicity of $h_1$ it follows that 
\begin{multline*}
\left|\int_\T \tilde{h}_{1,r}\,d\lambda\right|=\left|\int_{x+j_2\alpha}^1 h_1^{(r)}\, d\lambda\right|\leq  |h_1^{(r-1)}(1)|+ |h_1^{(r-1)}(x+j_2\alpha)|\leq\\
 (r-2)!+|h_1^{(r-1)}(\|q_{n-1}\alpha\|)|\leq 2(r-1)!\|q_{n-1}\alpha\|^{-r+1}.
\end{multline*}
Moreover, by the definition of $\tilde{h}_{1,r}$ it follows that  ${\rm Var}( \tilde{h}_{1,r})\leq 4|h_1^{(r)}(x+j_2\alpha)|\leq 4(r-1)!\|q_{n-1}\alpha\|^{-r}$.
Since $\frac{2}{q_n}>\|q_{n-1}\alpha\|\geq \frac{1}{2q_n}$, from \eqref{eq:DKfg} and \eqref{eq:imt}, we get 
$$
|S_{q_n}(h_1^{(r)})(x)-(-1)^{r}(r-1)!\left(\frac{1}{x^-_{n,min}}\right)|\leq 8(r-1)!\|q_{n-1}\alpha\|^{-r}.
$$

Analogously we show that 
$$
|S_{q_n}(h_2^{(r)})(x)-(r-1)!\left(\frac{1}{x^+_{n,min}}\right)|\leq 8(r-1)!\|q_{n-1}\alpha\|^{-r}.
$$
Since $h^{(r)}=h_1^{(r)}+h_2^{(r)}$, the two above inequalities finish the proof for $r\geq 2$. 

The proof in case $r=1$ follows similar steps, however one defines
$\tilde{h}_{1}(z):=h'_1(z)\chi_{[\|q_{n-1}\alpha\|,1]}(z)$. Then \eqref{eq:imt} becomes 
$$
|S_{q_n}(h'_1)(x)-h'_1(x_{n,min}^-)-S_{q_n}(\tilde{h}_{1})(x)|\leq \frac{1}{\|q_{n-1}\al\|}.
$$
Indeed, if $x_{n,min}^-\in[0,\|q_{n-1}\al\|]$ then the LHS of the above inequality is equal to $0$. Otherwise, it is equal to $h_1'(x_{n,min}^-)=-\frac{1}{x_{n,min}^-}\geq \frac{1}{\|q_{n-1}\al \|}$.
Analogously
$$
|S_{q_n}(h'_2)(x)-h'_2(x_{n,min}^+)-S_{q_n}(\tilde{h}_{2})(x)|\leq \frac{1}{\|q_{n-1}\al\|}.
$$
Next, by Denjoy-Koksma inequality and the definition of $\tilde{h}_1$, we have
$$
|S_{q_n}(\tilde{h}_{1})(x)-q_n\int_{\|q_{n-1}\alpha\|}^1h'_1(z)\, d\lambda|<{\rm Var}( \tilde{h}_1),
$$
and ${\rm Var}( \tilde{h}_1)\leq 4\|q_{n-1}\alpha\|^{-1}$. Analogously, we get 
$$
|S_{q_n}(\tilde{h}_{2})(x)-q_n\int_0^{1-\|q_{n-1}\alpha\|}h'_2(z)\, d\lambda|<{\rm Var}( \tilde{h}_2),
$$
with ${\rm Var}( \tilde{h}_2)\leq 4\|q_{n-1}\alpha\|^{-1}$. Putting all this together and using the fact that $-\int_{\|q_{n-1}\alpha\|}^1h'_1\, d\lambda=\int_0^{1-\|q_{n-1}\alpha\|}h'_2\,d\lambda$, we get 
\begin{multline*}
\left|S_{q_n}(h')(x)-\left[-\left(\frac{1}{x^-_{n,min}}\right)+\left(\frac{1}{x^+_{n,min}}\right)\right]\right|\leq \\
{\rm Var}( \tilde{h}_1)+{\rm Var}( \tilde{h}_2)+2\|q_{n-1}\al\|^{-1}\leq 10\|q_{n-1}\alpha\|^{-1}.
\end{multline*}
This finishes the proof for $r=1$.
\end{proof}

\textbf{Estimates relying on the diophantine condition.}
In this paragraph we assume that $\alpha \in \cC$ and that $(n_k)_{k\in \N}$ is a sequence satisfying \eqref{eq:diophcon} for $\alpha$. The estimates and elements of proofs in this section should be compared with  Section 4 in \cite{CU} (Proposition 4.2. in particular). 
%The difference in our case is that we need to control ALL points along one fixed sequence $(q_{n_k})_{k\in \N}$.

\begin{lemma}\label{lem:Best} For every $c''>0$ there exists a constant $C''>0$  such that for every $k\in \N$ and every $x\in \T$ satisfying 
$$
\{x,x+\alpha,\ldots,x+(q_{n_k}-1)\alpha\}\cap \left[\frac{-c''}{q_{n_k}},\frac{c''}{q_{n_k}}\right]=\emptyset,
$$
we have that for every $\ell\in [0,q_{n_k}]\cap \N$ it holds that
$$
|S_\ell(f')(x)|\leq C''q_{n_k}.
$$
\end{lemma}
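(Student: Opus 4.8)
The plan is to estimate $S_\ell(f')(x)$ for $\ell\le q_{n_k}$ by breaking the orbit segment $\{x,x+\alpha,\ldots,x+(\ell-1)\alpha\}$ into pieces on which the singular part of $f'$ is controlled, and using the Diophantine condition to bound the smallest gap that the orbit gets to the singularities $0$ and $1$. Recall that $f'=\frac{1}{1-x}-\frac{1}{x}+g'$ with $g'\in C^2(\T)$ of zero mean, so by Denjoy--Koksma $|S_\ell(g')(x)|$ is uniformly bounded (more precisely, any partial sum of length $\le q_{n_k}$ of a BV function differs from the expected value by at most $\mathrm{Var}(g')$ plus one more term, hence $|S_\ell(g')(x)|\le 2\,\mathrm{Var}(g')$, say). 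So it suffices to treat $h(x)=\frac1{1-x}-\frac1x$, and by symmetry it is enough to bound $S_\ell\!\big(\frac{-1}{x}\big)(x)=-\sum_{j=0}^{\ell-1}\frac{1}{x+j\alpha}$.

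Next I would use the standard combinatorial fact that among the $q_{n_k}$ points $x,x+\alpha,\ldots,x+(q_{n_k}-1)\alpha$, the successive left-to-right gaps take only two values, both comparable to $1/q_{n_k}$ (indeed between $\frac{1}{q_{n_k}}-\frac{2}{q_{n_k+1}}$ and $\frac{1}{q_{n_k}}+\frac{2}{q_{n_k+1}}$ by \eqref{eq:lenn}), and in particular every gap is at least $c_0/q_{n_k}$ for an absolute $c_0>0$ once $q_{n_k}\ge$ const. Order the points as $0<y_0<y_1<\cdots$ where, by the hypothesis $\{x+j\alpha\}\cap[-c''/q_{n_k},c''/q_{n_k}]=\emptyset$, we have $y_0\ge c''/q_{n_k}$. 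Then
\[
\Big|\sum_{j=0}^{\ell-1}\frac{1}{x+j\alpha}\Big|\le \sum_{i\ge 0}\frac{1}{y_i}\le \frac{1}{y_0}+\sum_{i\ge 1}\frac{1}{y_0+i\,c_0/q_{n_k}}\le \frac{q_{n_k}}{c''}+\frac{q_{n_k}}{c_0}\sum_{i=1}^{q_{n_k}}\frac1i \ll q_{n_k}\log q_{n_k},
\]
which is not quite the claimed bound $C''q_{n_k}$ — so the naive harmonic estimate loses a logarithm, and this is exactly where the Diophantine input is needed. The correct estimate exploits that $\ell\le q_{n_k}$: one compares $\sum_{j=0}^{\ell-1}\frac{1}{x+j\alpha}$ with $q_{n_k}\int_{c''/q_{n_k}}^{1}\frac{dz}{z}$ via Denjoy--Koksma applied to the BV truncation $z\mapsto \frac1z\chi_{[c''/q_{n_k},1]}(z)$, getting $S_{q_n}$ of that truncation within $\mathrm{Var}\le 4q_{n_k}/c''$ of $q_{n_k}\log(q_{n_k}/c'')\sim q_{n_k}\log q_{n_k}$, and then one must cancel the $q_{n_k}\log q_{n_k}$ main term against the matching contribution from $\frac{1}{1-x}$ near $1$: by the symmetry $\int_{a}^{1}\frac{dz}{z}=\int_0^{1-a}\frac{dz}{1-z}$, the two logarithmic divergences cancel exactly in $S_\ell(h)$, leaving a remainder of size $O(q_{n_k})$ — this is precisely the mechanism already used in the proof of Lemma~\ref{lem:DKlog} for $r=1$. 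The one subtlety is that here we take a partial sum of length $\ell\le q_{n_k}$, not the full sum of length $q_{n_k}$; but a partial sum of a BV function also satisfies a Denjoy--Koksma-type bound up to an extra additive $\mathrm{Var}$ term, so the same cancellation argument goes through with a slightly larger, still absolute, constant in front.

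Putting this together: choose the constant $C''$ depending on $c''$ (and on $\mathrm{Var}(g')$, on $C_f$, and on the absolute gap constant $c_0$) so that for all $k$ with $q_{n_k}$ large enough, $|S_\ell(f')(x)|\le C''q_{n_k}$ for every $\ell\in[0,q_{n_k}]\cap\N$; the finitely many remaining small $k$ are absorbed by enlarging $C''$. The main obstacle, as indicated above, is to see that the apparent $q_{n_k}\log q_{n_k}$ barrier is illusory because the logarithmic singularities at the two endpoints of $[0,1]$ have opposite sign in $f'$ and their Birkhoff sums cancel up to bounded variation; once that cancellation is set up exactly as in the $r=1$ case of Lemma~\ref{lem:DKlog}, the rest is a routine two-value-gap estimate using \eqref{eq:lenn} and \eqref{eq:q_n}. (Note that the Diophantine condition $\alpha\in\cC$ is used only implicitly here, through the hypothesis that the orbit avoids the $c''/q_{n_k}$-neighbourhood of the singularity and through the control $\|q_{n_k-1}\alpha\|\asymp 1/q_{n_k}$; the bound $a_{n_k+1}\asymp K^2+L^2$ will matter later when $x$ is allowed to approach the singularity, not in this particular lemma.)
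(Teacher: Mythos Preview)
There is a genuine gap. Your key claim—that ``a partial sum of a BV function also satisfies a Denjoy--Koksma-type bound up to an extra additive $\mathrm{Var}$ term''—is false. Denjoy--Koksma gives $|S_{q_n}(\phi)-q_n\int\phi|\le\mathrm{Var}(\phi)$ only when the length is a continued-fraction denominator; for a general $\ell$ one writes $\ell=\sum_i b_i q_i$ (Ostrowski) and obtains at best $|S_\ell(\phi)-\ell\int\phi|\le(\sum_i b_i)\,\mathrm{Var}(\phi)$, and $\sum_i b_i$ is not $O(1)$. Applying this to the truncation $\bar h=h\cdot\chi_{[c''/q_{n_k},\,1-c''/q_{n_k}]}$, whose integral vanishes by symmetry but whose variation is $\asymp q_{n_k}/c''$, gives only $|S_\ell(\bar h)|\lesssim(\sum_i b_i)\,q_{n_k}$, which is useless without control on the Ostrowski digits. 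The cancellation between $-1/x$ and $1/(1-x)$ that you invoke is already built into Lemma~\ref{lem:DK}; it yields $|S_{q_m}(f')|\le C'(q_m+x_{m,\min}^{-1})$ for each \emph{denominator} $q_m$, but it does not extend to arbitrary partial sums because a length-$\ell$ orbit segment is not well enough equidistributed for the two singular contributions to match.

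Relatedly, your assertion that the Diophantine condition ``is used only implicitly'' is incorrect: the tail bound $a_{n_k+1-n}\le e^{cn}$ for \emph{all} $1\le n\le n_k$ in \eqref{eq:diophcon} is exactly what the argument needs. The paper decomposes $\ell=\sum_i b_i q_i$, applies Lemma~\ref{lem:DK} to each block of length $q_{n_k-1-i}$, and must then bound a double sum of terms $[(x(i,s))_{n_k-1-i,\min}]^{-1}$. At scale $i$ these closest approaches are at least $1/(2q_{n_k-i})$ apart, so their reciprocals form a harmonic-type sum bounded by $q_{n_k-i}\log\big(a_{n_k-1-i}\,q_{n_k}/q_{n_k-i}\big)$; the Diophantine condition turns the logarithm into a polynomial in $i$, which is then beaten by the geometric decay $q_{n_k-i}/q_{n_k}\le 2^{-\lfloor i/2\rfloor}$. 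Without the condition on the earlier partial quotients these logarithmic factors are uncontrolled and the bound $C''q_{n_k}$ can genuinely fail. The missing ingredient is therefore the Ostrowski decomposition combined with this multi-scale harmonic estimate, not a single Denjoy--Koksma step at scale $q_{n_k}$.
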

\begin{proof}  We use Ostrovski expansion to write $\ell=\sum_{i=1}^{n_k-1}b_iq_i$, for $0\leq b_i\leq a_i$. Let $x\in\mathbb T$ satisfy the assumptions of the lemma. Let $x(0,s):=x+sq_{n_{k-1}}\alpha$  for $0\le s<b_{n_k-1}$ and let $x(i,s):=x+\left(\sum_{j=0}^{i-1} b_{n_k-1-j}q_{n_k-1-j}+ s q_{n_k-1-i}\right)\alpha$  for $1\leq i\leq n_k-1$ and $0\leq s< b_{n_k-i-1}$. We may assume that the point of the orbit which minimizes the distance from $0$ is one of the first $b_{n_k-1}q_{n_k-1}$ iterations of $x$. Then we split into orbits of length $q_i$:
$$
S_{\ell}(f')(x)=\sum_{i=0}^{n_k-1}\left(\sum_{s=0}^{b_{n_k-1-i}-1}S_{q_{n_k-1-i}}(f')(x(i,s))\right)
$$
If the point of the orbit which minimizes the distance from $0$ is one of the last $b_{n_k-1}q_{n_k-1}$ iterations of $x$, then we proceed analogously by defining $x(0,s):=T^{\ell}x-sq_{n_{k-1}}\alpha$  for $0\le s<b_{n_k-1}$ and\\ $x(i,s):=T^{\ell}x-\left(\sum_{j=0}^{i-1} b_{n_k-1-j}q_{n_k-1-j}+ s q_{n_k-1-i}\right)\alpha$  for $1\leq i\leq n_k-1$ and $0\leq s< b_{n_k-i-1}$ and considering representation 
$$
S_{\ell}(f')(x)=\sum_{i=0}^{n_k-1}\left(-\sum_{s=0}^{b_{n_k-1-i}}S_{-q_{n_k-1-i}}(f')(x(i,s))\right)
$$
The rest of the calculations is symmetric.

By Lemma \ref{lem:DK}, we have 
\be\label{eq:splt}
|S_{q_{n_k-1-i}}(f')(x(i,s))|\leq C'q_{n_k-1-i}+C' \left[(x(i,s))_{n_k-1-i,min}\right]^{-1}.
\ee
Notice that we can assume that WLOG we have that for every $0\le i \le n_{k}-1$ and $0\le s<b_{n_k-i-1}$ it holds that $(x(i,s))_{n_k-1-i,min}=(x(i,s))_{n_k-1-i,min}^+$. Indeed, we have that $\left[(x(i,s))_{n_k-1-i,min}\right]^{-1}<\left[(x(i,s))_{n_k-1-i,min}^+\right]^{-1}+\left[(x(i,s))_{n_k-1-i,min}^-\right]^{-1}$ and the estimate for the expressions of the second kind is analogous\footnote{Notice that by \eqref{eq:dist111} for $y\in \T$ and $n\in \N$, the number $y_{n,min}$ represents the distance of the closest point of the $q_n$ orbit of $y$ to $0$ (on $\T$). In what follows, the notation $y_{n,min}$ will also denote the point in the orbit which minimizes the distance.}.

Denote $\tilde{x}_{i}:=\min_{0\leq s\leq b_{n_k-1-i}}(x(i,s))_{n_k-1-i,min}$. Recall that we assumed that $\tilde{x}_0$ is the point which minimizes the distance of the whole orbit from $0$. Notice that for $0\leq s<b_{n_k-1-i}$,  the points $(x(i,s))_{n_k-1-i,min}$ together with $\tilde x_{i+1}$ (we put $\tilde x_{n_k}=\infty$) are distinct
and they belong to the orbit of length $q_{n_k-i}$ of the point $x(i,0)$. So for $0\leq s,s'< b_{n_k-1-i}$, $s\neq s'$, we have 
$$
\|(x(i,s))_{n_k-1-i,min}-(x(i,s'))_{n_k-1-i,min}\|\geq \frac{1}{2q_{n_k-i}}.
$$
and
\[
\|(x(i,s))_{n_k-1-i,min}-\tilde{x}_{i+1}\|\geq \frac{1}{2q_{n_k-i}}.
\]
Therefore by assumption on $x$ we get, 
\begin{equation}\label{log0}
	\begin{split}
\left|\frac{1}{\tilde x_{i+1}}+\sum_{s=1}^{b_{n_k-1-i}-1}\left[(x(i,s))_{n_k-1-i,min}\right]^{-1}\right|&\leq
 \left|\sum_{s=1}^{b_{n_k-1-i}}\frac{1}{\tilde{x}_{i}+\frac{s}{2q_{n_k-i}}}\right|\\
 &\leq 2q_{n_{k}-i}\left|\sum_{s=1}^{b_{n_k-1-i}}\frac{1}{2c''\frac{q_{n_k-i}}{q_{n_k}}+ s} \right|.
 \end{split}
\end{equation}
 Moreover, by the standard estimate on harmonic sum along arithmetic progression, we get 
\begin{equation}\label{log1}
\begin{split}
2q_{n_{k}-i}\left|\sum_{s=1}^{b_{n_k-1-i}}\frac{1}{2c''\frac{q_{n_k-i}}{q_{n_k}}+ s} \right|&\leq 2q_{n_{k}-i}\ln\left(1+\frac{b_{n_k-1-i}q_{n_k}}{2c''q_{n_k-i}} \right)\\
&\le 
 2q_{n_{k}-i}\ln\left((2c''+1)\frac{a_{n_k-1-i}q_{n_k}}{2c''q_{n_k-i}} \right).
\end{split}
\end{equation}
By using the fact that {$q_j=a_jq_{j-1}+q_{j-2}\le a_j(q_{j-1}+q_{j-2})\le 2a_jq_{j-1}\le ea_jq_{j-1}$} and the diophantine condition \eqref{eq:diophcon} we obtain
\begin{equation}\label{log2}
	\begin{split}
\ln\left((2c''+1)\frac{a_{n_k-1-i}q_{n_k}}{2c''q_{n_k-i}} \right)&\le \ln\left( \tfrac{2c''+1}{2c''}e^{i+1}\prod_{j=n_k-1-i}^{n_k}a_j \right)\\
&\le\ln\left(\tfrac{2c''+1}{2c''}e^{i+1}\prod_{j=0}^{i+1}e^{cj} \right)\\
&\le\ln\tfrac{2c''+1}{2c''}+(i+1)+\frac{c(i+2)(i+1)}{2}
	\end{split}
	\end{equation}
By combining \eqref{log0}, \eqref{log1} and \eqref{log2} and the assumption on $x$  we get that there exists a constant $C'''>0$ such that
\[
\begin{split}
\sum_{i=0}^{n_k-1}\sum_{s=0}^{b_{n_k-1-i}}&\left[(x(i,s))_{n_k-1-i,min}\right]^{-1}\\
&\leq\frac{1}{\tilde x_0}+\sum_{i=0}^{n_k-1}2q_{n_k-i}\left(\ln\tfrac{2c''+1}{2c''}+(i+1)+\frac{c(i+2)(i+1)}{2}\right)\\
&\leq C'''q_{n_k}, 
\end{split}
\]
since
$\frac{q_{n_k-s}}{q_{n_k}}\leq 2^{\lfloor-s/2\rfloor}$\footnote{ Indeed notice that for every $n\geq 2$, $q_n=a_nq_{n-1}+q_{n-2}\geq q_{n-1}+q_{n-2}\geq 2q_{n-2}$. Therefore $q_n\geq 2^{\lfloor{s/2}\rfloor}q_{n-s}$. We use it for $n=n_k$.}, for $s\geq 2$.
This together with \eqref{eq:splt} finishes the proof by setting $C=C'(1+C''')$.
\end{proof}

\subsection{Spectral disjointness}
Recall that we consider $\{T^{\alpha,f}_{Kt}\}_{t\in\R}$ and $\{T^{\alpha,f}_{Lt}\}_{t\in\R}$, where $K,L\in \N$, $K\neq L$, $\alpha\in \cC$ and $(n_k)_{k\in \N}$ is the sequence along which \eqref{eq:diophcon} holds for $\alpha$. The distinct natural numbers $K$ and $L$ are fixed in what follows and we will assume WLOG and that $L>K$.

Let $\T\ni x_k:=\frac{1}{2 q_{n_k}}$ and let $c_k:=S_{q_{n_k}}(f)(x_k)$. The following set will play an important role in establishing spectral disjointness: for $b\in \R_+$,  let 
\be\label{eq:cuspset}
A_k(b):=\{x\in \T\;:\; |S_{q_{n_k}}(f)(x)-c_k|\geq b\}.
\ee
Notice that for $b'>b>0$, $A_k(b')\subset A_k(b)$.
The two lemmas below  allow us to show spectral disjointness of $(T_{Kt})$ and $(T_{Lt})$. 

\begin{lemma}\label{lem:tech1} 
There exists  a constant $D>0$ such that for  every $k\in \N$, we have 
$$
D^{-1}e^{-b}\leq \lambda\left(A_k(b)\right)\leq De^{-b}.
$$
\end{lemma}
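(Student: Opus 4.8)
The plan is to understand $A_k(b)$, the set of $x$ where the Birkhoff sum $S_{q_{n_k}}(f)(x)$ deviates from the value $c_k=S_{q_{n_k}}(f)(x_k)$ by at least $b$, as (essentially) the set of points $x$ whose orbit of length $q_{n_k}$ comes very close to the singularity at $0$. The heuristic is this: by the Denjoy--Koksma inequality, for a function of bounded variation the Birkhoff sum over $q_{n_k}$ iterates is within $\mathrm{Var}$ of $q_{n_k}\int f$; the only reason $S_{q_{n_k}}(f)(x)$ can be large is that one orbit point falls into the cusp, and then $S_{q_{n_k}}(f)(x)\approx -\log x_{n_k,min}+(\text{bounded})$, where $x_{n_k,min}$ is the distance of the closest orbit point to $0$. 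Since $x_k=\tfrac{1}{2q_{n_k}}$ was chosen precisely so that its closest orbit approach is of order $1/q_{n_k}$ (this is where $x_k$ being a "typical" cusp value enters, and $c_k$ is a normalising constant of size $\sim\log q_{n_k}$), the condition $|S_{q_{n_k}}(f)(x)-c_k|\ge b$ should be, up to bounded multiplicative error, equivalent to $x_{n_k,min}\le C^{\pm1}e^{-b}/q_{n_k}$ for appropriate constants.

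First I would split $f$ as $f(x)=h(x)+g(x)$ with $h(x)=-\log x-\log(1-x)$ and $g\in C^3(\T)$; by Denjoy--Koksma $|S_{q_{n_k}}(g)(x)-q_{n_k}\int g|$ is uniformly bounded, so this term only shifts things by an absolute constant, absorbed into $D$. For the logarithmic part I would invoke the (integrated, $r=0$ analogue of) Lemma \ref{lem:DKlog}: away from the single closest orbit point, the truncated function $h(z)\chi_{[\|q_{n_k-1}\alpha\|,\,1-\|q_{n_k-1}\alpha\|]}$ has bounded variation $O(\log q_{n_k})$ — actually one wants the sharper statement that $S_{q_{n_k}}(h)(x) = -\log x^-_{n_k,min} - \log x^+_{n_k,min} + O(1)$, proved exactly as Lemma \ref{lem:DKlog} with the same truncation argument but integrating $h'$ rather than $h^{(r)}$. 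Then $S_{q_{n_k}}(f)(x) = -\log x_{n_k,min} + O(1)$ uniformly in $x$ and $k$ (using that the second-closest approach is $\ge \|q_{n_k-1}\alpha\|\sim 1/q_{n_k}$, so the "far" cusp contributes only $O(\log q_{n_k})$ which is exactly cancelled against $c_k$). Applying the same identity at $x=x_k$ gives $c_k = \log(2q_{n_k}) + O(1)$, hence
\[
S_{q_{n_k}}(f)(x)-c_k = -\log\!\big(q_{n_k}\, x_{n_k,min}\big) + O(1),
\]
with the $O(1)$ bounded by some absolute constant $C_0$ independent of $k$ and $x$.

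Consequently $A_k(b)$ is sandwiched between $\{x:\ q_{n_k}x_{n_k,min}\le e^{-b+C_0}\}$ and $\{x:\ q_{n_k}x_{n_k,min}\le e^{-b-C_0}\}$ (for the relevant range of $b$; for small $b$ one adjusts $D$ so the bound is trivial since $\lambda(A_k(b))\le 1$). Finally I would compute the Lebesgue measure of $\{x\in\T:\ x_{n_k,min}\le \eta\}$ for $\eta=e^{-b\pm C_0}/q_{n_k}$: this set is $\bigcup_{j=0}^{q_{n_k}-1}\big((-j\alpha - \eta,\,-j\alpha+\eta)\big)$, a union of $q_{n_k}$ intervals each of length $2\eta$ around the points $\{-j\alpha\}_{0\le j<q_{n_k}}$, which by \eqref{eq:lenn} are roughly $1/q_{n_k}$-separated; for $\eta$ of order $e^{-b}/q_{n_k}$ with $b$ not too small these are pairwise disjoint, so the measure is $2q_{n_k}\eta = 2e^{-b\pm C_0}$. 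Setting $D:=2e^{C_0}$ (enlarged to handle the small-$b$ regime) gives $D^{-1}e^{-b}\le\lambda(A_k(b))\le De^{-b}$, as required.

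The main obstacle is the uniform $O(1)$ control, independent of both $k$ and $x$, of $S_{q_{n_k}}(f)(x) + \log x_{n_k,min}$: one must be careful that the truncation point $\|q_{n_k-1}\alpha\|$ really does separate the single dangerous orbit point from all the others (which follows since any two of the $q_{n_k}$ orbit points differ by $j\alpha$ with $0<|j|<q_{n_k}$, so are $\ge\|q_{n_k-1}\alpha\|\ge\tfrac1{2q_{n_k}}$ apart), and that the variation of the truncated log and the contribution of $x^+_{n_k,min}$ versus $c_k$ genuinely cancel up to a constant rather than leaving a residual $\log q_{n_k}$. The diophantine condition \eqref{eq:diophcon} is not needed for this particular lemma — it holds for every irrational $\alpha$, exactly like Lemmas \ref{lem:DK} and \ref{lem:DKlog} — which is a useful sanity check on the argument.
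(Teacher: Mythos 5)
Your heuristic picture is correct — $A_k(b)$ should be a union of $q_{n_k}$ small intervals of radius $\sim e^{-b}/q_{n_k}$ around the orbit points $\{-j\alpha\}_{0\le j<q_{n_k}}$, and that picture does hold (it is precisely Lemma~\ref{lem:new} in the paper). But the step on which your whole argument rests — the uniform $O(1)$ bound
\[
S_{q_{n_k}}(f)(x) - c_k \;=\; -\log\bigl(q_{n_k}\,x_{n_k,\min}\bigr) + O(1),
\]
with the implicit constant independent of $k$ and $x$ — is exactly the hard point, and the mechanism you offer does not produce it. The ``$r=0$ analogue'' of Lemma~\ref{lem:DKlog}, i.e.\ Denjoy--Koksma applied to the truncated function $\bar h(z)=h(z)\chi_{[\|q_{n_k-1}\alpha\|,\,1-\|q_{n_k-1}\alpha\|]}(z)$, gives an error controlled by $\mathrm{Var}(\bar h)\sim\log q_{n_k}$, not by an absolute constant. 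The reason the paper can get $O(1)$ errors at the $r\ge1$ level is that there $\int h^{(r)}=0$ and the Denjoy--Koksma error $\sim\|q_{n_k-1}\alpha\|^{-r}$ is of the same size as the main term at the midpoint $y_s$; for $r=0$ one has $\int h=2\neq 0$ and the error is a genuine $O(\log q_{n_k})$ term which does not cancel automatically between $x$ and $x_k$. Carried through, your estimate would only give
$e^{-b}q_{n_k}^{-C}\lesssim\lambda(A_k(b))\lesssim e^{-b}q_{n_k}^{\,C}$, which is far too weak. You acknowledge this as ``the main obstacle,'' but assert the sharper $O(1)$ statement without supplying the extra input needed to overcome it.

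The paper's proof of Lemma~\ref{lem:tech1} goes through Lemma~\ref{lem:new} and gets around exactly this difficulty by a two-stage argument: \textbf{Claim~A} Taylor-expands $S_{q_{n_k}}(h)(x)-S_{q_{n_k}}(h)(y_s)$ around the \emph{midpoint} $y_s$ of the Farey interval $I_s$ containing $x$, using Lemma~\ref{lem:DKlog} for each $r\ge1$; after summing the series, the main terms reproduce $-\log\frac{2(x-v_s)}{|I_s|}-\log\frac{2(w_s-x)}{|I_s|}$ up to an $O(1)$ error. \textbf{Claim~B} then shows $|S_{q_{n_k}}(h)(y_s)-c_k^h|<C'''$ uniformly in $s$ and $k$ by comparing $y_s$ with $x_k-\ell\alpha$ via the mean value theorem and Lemma~\ref{lem:Best}, which bounds $|S_\ell(f')(\theta)|\lesssim q_{n_k}$ for $\theta$ staying away from the cusp. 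This also shows that your ``sanity check'' is wrong: the Diophantine condition \eqref{eq:diophcon} \emph{is} used, and essentially so. The lower bound $a_{n_k+1}>100(K^2+L^2)$ gives $\|q_{n_k-1}\alpha\|\ge\frac{98}{100q_{n_k}}$, which makes the ratio $|x-y_s|/\|q_{n_k-1}\alpha\|<1$ so that the Taylor series in Claim~A converges; and the controlled-growth condition $a_{n_k+1-n}\le e^{cn}$ is what drives Lemma~\ref{lem:Best}, which is the engine of Claim~B. So the missing ideas in your sketch are precisely the midpoint comparison and the use of Lemma~\ref{lem:Best}; neither is a routine ``same truncation argument.''
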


\begin{lemma}\label{lem:tech2} There exists $k_0\in \N$ and $b_0>0$ such that for every $k\geq k_0$, $b\geq b_0$ and for every $i\in \{-L,-L+1,\ldots, -1,1,\ldots, L\}$, we have
\be\label{eq:lem2}
A_k(b)\cap R_{\alpha}^{i q_{n_k}}\left(A_k\left(\frac{b}{2L}\right)\right)=\emptyset.
\ee
\end{lemma}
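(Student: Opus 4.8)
The plan is to argue by contradiction. Fix $i$ with $1\le |i|\le L$ and suppose that for some large $k$ and $b$ the intersection in \eqref{eq:lem2} is nonempty; pick $x$ in it and put $z:=R_\alpha^{-iq_{n_k}}x$. Writing $\phi_k:=S_{q_{n_k}}(f)(\cdot)-c_k$, the two membership conditions become $|\phi_k(z)|\ge b/2L$ and $|\phi_k(z+iq_{n_k}\alpha)|\ge b$. I will show these force two points of the $R_\alpha$-orbit of $z$, lying in different ``$q_{n_k}$-blocks'', to be simultaneously far too close to $0\in\T$, which is impossible along the chosen Diophantine subsequence. (The ``far'' side of $A_k$ will be seen to be empty once $b_0$ is large, so the absolute values will disappear.)

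First I would record the closed form for $\phi_k$ that really underlies Lemma~\ref{lem:tech1}: there is a constant $M=M(g,K,L)$ and a $k_0$ such that for $k\ge k_0$ and all $z\in\T$,
\[
\bigl|\,\phi_k(z)+\log\bigl(q_{n_k}z^-_{n_k,min}(z)\bigr)+\log\bigl(q_{n_k}z^+_{n_k,min}(z)\bigr)\,\bigr|\le M .
\]
This rests on three points: (a) for the \emph{symmetric} singularity $-\log x-\log(1-x)$ the ``$q_{n_k}\log q_{n_k}$'' and ``$\log q_{n_k}$'' scale terms of the $q_{n_k}$-Birkhoff sum cancel, because by \eqref{eq:lenn} the gap of $\{z+j\alpha:0\le j<q_{n_k}\}$ through $0$ has length $g$ with $q_{n_k}g=q_{n_k}z^-_{n_k,min}(z)+q_{n_k}z^+_{n_k,min}(z)=1+O((K^2+L^2)^{-1})$, using that $a_{n_k+1}$ is bounded along $(n_k)$ by \eqref{eq:diophcon}; (b) Denjoy--Koksma for the $C^3$ part $g$ (variation $O(1)$); (c) the same expansion at the base point $x_k=\tfrac1{2q_{n_k}}$, for which $q_{n_k}(x_k)^{\pm}_{n_k,min}\asymp1$ (a direct computation from $a_{n_k+1}$ being large), so that the $q_{n_k}$-linear and $\log q_{n_k}$ terms drop out of the difference $\phi_k=S_{q_{n_k}}(f)(\cdot)-c_k$. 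Since $z^{\pm}_{n_k,min}\le g\le C_0/q_{n_k}$ by \eqref{eq:lenn}, each $-\log(q_{n_k}z^{\pm}_{n_k,min})\ge-\log C_0$, hence $\phi_k\ge -2\log C_0-M=:-C_3$ everywhere; so for $b\ge b_0\ge 2LC_3$ one has $A_k(\beta)=\{\phi_k\ge\beta\}$ for $\beta\in\{b,b/2L\}$, and, moreover, taking the larger of the two $-\log$ terms, $\phi_k(w)\ge\beta$ implies $\min\bigl(w^-_{n_k,min}(w),w^+_{n_k,min}(w)\bigr)\le C_1e^{-\beta/2}/q_{n_k}$ for a constant $C_1=C_1(g,K,L)$; that is, the orbit $\{w+j\alpha:0\le j<q_{n_k}\}$ contains a point at $\|\cdot\|$-distance $\le C_1e^{-\beta/2}/q_{n_k}$ from $0\in\T$.

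Now for Step 2. Applying the last implication to $w=z$, $\beta=b/2L$ gives $j_0\in\{0,\dots,q_{n_k}-1\}$ with $\|z+j_0\alpha\|\le C_1e^{-b/4L}/q_{n_k}$; applying it to $w=z+iq_{n_k}\alpha$, $\beta=b$ gives $j_1\in\{iq_{n_k},\dots,(i+1)q_{n_k}-1\}$ with $\|z+j_1\alpha\|\le C_1e^{-b/2}/q_{n_k}\le C_1e^{-b/4L}/q_{n_k}$, whence $\|(j_1-j_0)\alpha\|\le\|z+j_0\alpha\|+\|z+j_1\alpha\|\le 2C_1e^{-b/4L}/q_{n_k}$. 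But $j_0$ lies in the $0$-th block and $j_1$ in the $i$-th block with $i\neq0$, so $j_0\ne j_1$ and $0<|j_1-j_0|<(L+1)q_{n_k}$; by \eqref{eq:diophcon}, $q_{n_k+1}=a_{n_k+1}q_{n_k}+q_{n_k-1}>100(K^2+L^2)q_{n_k}>(L+1)q_{n_k}$, so $|j_1-j_0|<q_{n_k+1}$, and hence by the best-approximation property and \eqref{eq:q_n},
\[
\|(j_1-j_0)\alpha\|\ge\|q_{n_k}\alpha\|\ge\frac1{2q_{n_k+1}}\ge\frac1{2\bigl(200(K^2+L^2)+1\bigr)q_{n_k}}\ge\frac1{500(K^2+L^2)q_{n_k}},
\]
where the upper bound $a_{n_k+1}<200(K^2+L^2)$ of \eqref{eq:diophcon} is used. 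Combining the two estimates yields $e^{-b/4L}\ge\bigl(1000\,C_1(K^2+L^2)\bigr)^{-1}$, i.e. $b\le 4L\log\bigl(1000\,C_1(K^2+L^2)\bigr)$. Taking $b_0:=\max\{2LC_3,\ 4L\log(1000\,C_1(K^2+L^2))\}+1$ and $k_0$ as in Step 1, this contradicts $b\ge b_0$; hence the intersection in \eqref{eq:lem2} is empty, proving the lemma.

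The main obstacle is entirely in Step 1: one needs the error in the closed form for $\phi_k$ to be $O(1)$ rather than $O(\log q_{n_k})$, for otherwise the comparison with $c_k$ would not be uniform in $k$. This is exactly where the symmetric nature of the singularity (producing the cancellation) and the two-sided pinching $100(K^2+L^2)<a_{n_k+1}<200(K^2+L^2)$ are essential — the latter keeps $q_{n_k}z^{\pm}_{n_k,min}$ of order $1$ and $\|q_{n_k}\alpha\|$ of order $1/((K^2+L^2)q_{n_k})$ — and it is the same estimate that proves Lemma~\ref{lem:tech1}. Granting it, Step 2 is a soft pigeonhole: any two points of a $(2L{+}1)q_{n_k}$-long stretch of the orbit that lie in distinct blocks are at distance $\ge\|q_{n_k}\alpha\|\gg e^{-b}/q_{n_k}$, so they cannot both be $e^{-b/4L}/q_{n_k}$-close to $0$.
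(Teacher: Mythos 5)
Your proof is correct and follows essentially the same strategy as the paper: your Step 1 is the outer inclusion of the paper's Lemma \ref{lem:new} (Claims A and B), namely that membership in $A_k(\beta)$ forces a point of the $q_{n_k}$-orbit to come within $O(e^{-c\beta}/q_{n_k})$ of $0\in\T$, and your Step 2 uses the pinched partial quotient $100(K^2+L^2)<a_{n_k+1}<200(K^2+L^2)$ to rule out two such close encounters at lags differing by $iq_{n_k}$. The only real difference is one of packaging in Step 2: rather than estimating $\|R_\alpha^{iq_{n_k}}x-(-s\alpha)\|$ and splitting into the cases $s=j$ and $s\neq j$ as the paper does, you apply the best-approximation inequality $\|m\alpha\|\geq\|q_{n_k}\alpha\|$ for $0<|m|<q_{n_k+1}$ once, which handles both cases uniformly.
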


We will give proofs of Lemma \ref{lem:tech1} and \ref{lem:tech2} in Subsection \ref{sec:lemproof}. Let us first show how the two lemmas imply the main result of this section. Notice first that Lemma \ref{lem:tech2} has the following important consequence:
\begin{corollary}\label{cor:tech2} Let $k\geq k_0$ and $b\geq b_0$. If $x\in \T$ is such that $|S_{q_{n_k}}(f)(x)-c_k|\geq b$, then for every $i\in \{-L,-L+1,\ldots, -1,1,\ldots, L\}$, $|S_{q_{n_k}}(f)(R_\alpha^{iq_{n_k}}x)-c_k|\leq \frac{b}{2L}$.
\end{corollary}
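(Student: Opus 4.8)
The statement is a direct logical consequence of Lemma \ref{lem:tech2}, so the plan is simply to unwind the definitions and argue by contraposition. Fix $k\geq k_0$, $b\geq b_0$ and $x\in\T$ with $|S_{q_{n_k}}(f)(x)-c_k|\geq b$, i.e.\ $x\in A_k(b)$. Fix $i\in\{-L,\ldots,-1,1,\ldots,L\}$ and suppose, towards a contradiction, that $|S_{q_{n_k}}(f)(R_\alpha^{iq_{n_k}}x)-c_k|>\frac{b}{2L}$; that is, $R_\alpha^{iq_{n_k}}x\in A_k\!\left(\frac{b}{2L}\right)$, equivalently $x\in R_\alpha^{-iq_{n_k}}\bigl(A_k(\tfrac{b}{2L})\bigr)$. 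Then $x$ would lie in $A_k(b)\cap R_\alpha^{-iq_{n_k}}\bigl(A_k(\tfrac{b}{2L})\bigr)$, which is empty by Lemma \ref{lem:tech2} applied with $-i$ in place of $i$ (note $-i$ again ranges over $\{-L,\ldots,-1,1,\ldots,L\}$, so the hypothesis of the lemma covers it). This contradiction forces $|S_{q_{n_k}}(f)(R_\alpha^{iq_{n_k}}x)-c_k|\leq\frac{b}{2L}$, which is exactly the claim.

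The only subtlety worth recording is the index bookkeeping: Corollary \ref{cor:tech2} quantifies over $i$, while Lemma \ref{lem:tech2} is phrased with $R_\alpha^{iq_{n_k}}$ acting on the inner set $A_k(\tfrac{b}{2L})$; passing between ``$R_\alpha^{iq_{n_k}}x\in A_k(\tfrac{b}{2L})$'' and ``$x\in R_\alpha^{jq_{n_k}}\bigl(A_k(\tfrac{b}{2L})\bigr)$'' requires $j=-i$, and one must observe that the set $\{-L,\ldots,-1,1,\ldots,L\}$ is symmetric under negation so the lemma still applies. There is no real obstacle here — the corollary is essentially a restatement of the lemma in pointwise form — so the proof is a two-line argument once Lemma \ref{lem:tech2} is in hand; the genuine work lies entirely in establishing Lemma \ref{lem:tech2} itself (deferred to Subsection \ref{sec:lemproof}), which is where the diophantine condition \eqref{eq:diophcon} and the Birkhoff-sum estimates of Lemmas \ref{lem:DK}, \ref{lem:DKlog} and \ref{lem:Best} are used.
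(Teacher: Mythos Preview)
Your proof is correct and follows exactly the same approach as the paper: both derive the corollary as an immediate contrapositive of Lemma~\ref{lem:tech2} using the definition \eqref{eq:cuspset} of $A_k(b)$. Your version is in fact slightly more careful, since you explicitly note the $i\mapsto -i$ index swap and the symmetry of $\{-L,\ldots,-1,1,\ldots,L\}$, a point the paper's one-line proof leaves implicit.
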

\begin{proof}This is a straightforward consequence of Lemma \ref{lem:tech2} and \eqref{eq:cuspset} as every $x$ contradicting the statement of corollary has to satisfy 
$$
x\in A_k(b)\cap R_{\alpha}^{i q_{n_k}}\left(A_k\left(\frac{b}{2L}\right)\right).
$$
\end{proof}

\begin{proof}[of Theorem~{\rm\ref{thm:1}}.] To show spectral disjointness of $\{T^{\alpha,f}_{Kt}\}_{t\in\R}$ and $\{T^{\alpha,f}_{Lt}\}_{t\in\R}$ we will use Corollary \ref{cor:maintool} with $a_k=c_k$, $W_k=\T$ and $q_k=q_{n_k}$, $k\in \N$. First, it follows by \cite{FrLem2} that $\{T^{\alpha,f}_t\}_{t\in\R}$ is weakly mixing (the authors show weak mixing for every $\alpha\notin \Q$). Notice also that  with this choice of $\{W_k\}$, $\{a_k\}$ and $\{q_k\}$, \eqref{wl1}-\eqref{wl3} hold trivially. Moreover, by Lemma \ref{lem:tech1} and the definition of $A_k(b)$ it follows that \eqref{wl5} holds. By cocycle identity and \eqref{wl5}, it then follows that for $w\in \{K,L\}$, we have 
$$
\int_\T |S_{wq_{n_k}}(f)(x)-wc_k|^2 d\lambda \;\text{ is bounded }.
$$
Therefore, the sequence of measures $(S_{wq_{n_k}}(f)(x)-wc_k)_\ast\lambda$ on $\R$ is \emph{uniformly tight}. Let then (by passing to subsequence if necessary) 

\be\label{eq:con1}
(S_{Kq_{n_k}}(f)(x)-Kc_k)_\ast\lambda\to P_K
\ee
and 
\be\label{eq:con2}
(S_{Lq_{n_k}}(f)(x)-Lc_k)_\ast\lambda \to P_L.
\ee
For simplicity we will denote $P_K=P$ and $P_L=Q$.

To show spectral disjointness, by Corollary \ref{cor:maintool}, it is enough to show that $P, Q\in \mathcal{P}(\R)$ both  have exponential decay  and that $Res_K(P)\neq Res_L(Q)$. 
To show exponential decay of $P$ and $Q$ we will use the technique from \cite{FrLem2}. In \cite{FrLem2} the measure $P$ was obtained as a weak limit of 
$(S_{q_{n_k}}(f)(x)-c_k)_\ast\lambda$, where $(n_k)_{k\in \N}$ was a {\em very rigid} sequence, i.e. $a_{n_k+1}\geq n_k$. In our case, if we used this very rigid sequence, then one could show that $P=Q$. Therefore we are forced to use {\em balanced sequences}  $(n_k)_{k\in \N}$, (see \eqref{eq:diophcon}), which allows to show that the measures $Res_K(P)$ and $Res_L(Q)$ are different, however it is harder to show exponential decay. 
The proof is henceforward divided into two steps. Firstly we show that $Res_K(P)\neq Res_L(Q)$. Secondly we show the exponential decay of the aforementioned measures.

\noindent\textbf{The measures $Res_K(P)$ and $Res_L(Q)$ are not equal}.
To show that $Res_K(P)\neq Res_L(Q)$ we will show that there exists $b>0$ such that 
$$
Res_K(P)\left((-\infty, -b)\cup(b,+\infty)\right)\neq Res_L(Q)\left((-\infty, -b)\cup(b,+\infty)\right).
$$
By the definition of $P$ and $Q$ and Remark \ref{rem:messet}, the above follows by showing
\begin{multline}\label{eq:shw1}
\lim_{k\to+\infty} \lambda\{x\in \T\;:\; |S_{Kq_{n_k}}(f)(x)-Kc_k|>Kb\}\neq \\
\lim_{k\to+\infty} \lambda\{x\in \T\;:\; |S_{Lq_{n_k}}(f)(x)-Lc_k|>Lb\}.
\end{multline}
Let $w\in \{K,L\}$. Let $b>0$ be a parameter to be specified later, for now we assume $b\geq b_0$, where $b_0$ comes from Lemma \ref{lem:tech2}. Notice that by cocycle identity, we have 
\be\label{eq:coc}
S_{wq_{n_k}}(f)(x)-wc_k=
\sum_{i=0}^{w-1}\left(S_{q_{n_k}}(f)(R_\alpha^{iq_{n_k}}x)-c_k\right).
\ee
Let $B^w_k(b):=\{x\in \T\;:\;  \exists_{i\in\{1,\ldots,w\}}\,  b\leq |S_{q_{n_k}}(f)(R_\alpha^{iq_{n_k}}x)-c_k|\leq (w-1/2)b\}$. We will show below that 
\be\label{eq:empint}
\{x\in \T\;:\; |S_{wq_{n_k}}(f)(x)-wc_k|\geq wb\}\cap B^w_k(b)=\emptyset.
\ee
Indeed, notice that if for some $i\in \{1,\ldots,w\}$, we have  $b\leq |S_{q_{n_k}}(f)(R_\alpha^{iq_{n_k}}x)-c_k|\leq (w-1/2)b$, then, by Corollary \ref{cor:tech2} for $\bar{x}=R_\alpha^{iq_{n_k}}x$, for every $j\in \{1,\ldots,w\}$, $j\neq i$, we have $|S_{q_{n_k}}(f)(R_\alpha^{iq_{n_k}}x)-c_k|\leq \frac{b}{2L}$. This, { the} triangle inequality and \eqref{eq:coc} imply that
$$
 |S_{wq_{n_k}}(f)(x)-wc_k|\leq (w-1/2)b+(w-1)\frac{b}{2L}<wb,
$$
since $w\leq L$. This shows \eqref{eq:empint}. { Notice that if for $x\in \T$ it holds that $|S_{wq_{n_k}}(f)(x)-wc_k|\geq wb$, then by \eqref{eq:coc} and the triangle inequality, there exists $i\in \{1,\ldots,w\}$ such that $|S_{q_{n_k}}(f)(R_\alpha^{iq_{n_k}}x)-c_k|\geq b$.} Therefore and by \eqref{eq:empint}, it follows that (see also \eqref{eq:cuspset})
\begin{multline*}
\{x\in \T\;:\; |S_{wq_{n_k}}(f)(x)-wc_k|\geq wb\}\subset \\
\{x\in \T\;: \;\exists_i\in\{1,\ldots,w\}  |S_{q_{n_k}}(f)(R_\alpha^{iq_{n_k}}x)-c_k|> (w-1/2)b\}\subset\\
\bigcup_{i=1}^wR_\alpha^{-iq_{n_k}}\left(A_k((w-1/2)b\right).
\end{multline*}
On the other hand,
$$
 A_k((w+\frac{w}{2L})b)\subset \{x\in \T\;:\; |S_{wq_{n_k}}(f)(x)-wc_k|\geq wb\}.
$$
Indeed, since $x\in A_k((w+\frac{w}{2L})b)$ iff $|S_{q_{n_k}}(f)(x)-c_k|\ge (w+\frac{w}{2L})b$ which in turn implies that $|S_{q_{n_k}}(f)(x)-c_k|\ge b$, then by \eqref{eq:cuspset} and Corollary \eqref{cor:tech2} we get 
\[
|S_{wq_{n_k}}(f)(x)-wc_k|\geq (w+\frac{w}{2L})b-(w-1)\frac{b}{2L}\geq wb.
\] 
The two above inclusions together with Lemma \ref{lem:tech1} imply that 
$$
D^{-1}e^{-w(1+\frac{1}{2L})b}\leq \lambda\left(\{x\in \T\;:\; |S_{wq_{n_k}}(f)(x)-wc_k|\geq wb\}\right)\leq wDe^{-(w-1/2)b}.
$$
Using this for $w=K$, we get 
$$
\lim_{k\to+\infty} \lambda\{x\in \T\;:\; |S_{Kq_{n_k}}(f)(x)-Kc_k|>Kb\}\geq D^{-1}e^{-K(1+\frac{1}{2L})b}
$$
and for $w=L$, we get 
$$
\lim_{k\to+\infty} \lambda\{x\in \T\;:\; |S_{Lq_{n_k}}(f)(x)-Lc_k|>Lb\}\leq LDe^{-(L-1/2)b}.
$$
Since $L>K$ (and so $L\geq K+1$), we have $D^{-1}e^{-K(1+\frac{1}{2L})b}>LDe^{-(L-1/2)b}$ if $b$ is large enough (depending on $K,L,D$). The two above inequalities prove \eqref{eq:shw1}. Therefore $Res_K(P)\neq Res_L(Q)$.

\textbf{Exponential decay of $P$ and $Q$.}
 Since the proof of exponential decay of $P$ and $Q$ follows the same steps, we will show the proof of exponential decay of $P$(see also Proposition 7 in \cite{FrLem2})\footnote{As mentioned before, the diophantine condition used here to get the limiting measure is orthogonal to the one in \cite{FrLem2}. Therefore, although the statement of Lemma \ref{lem:tech1} is the same as that of Proposition 7 in \cite{FrLem2}, the proofs are different.}, that is
 there exist constants $\bar{C}, \bar{c}>0$ such that 
$$
P\left(\{t\in \R \;:\; |t|\geq s\}\right)\leq \bar{C}e^{-s\bar{c}}.
$$
Indeed, notice that by \eqref{eq:con1}, the cocycle indentity (see \eqref{eq:coc}) and \eqref{eq:cuspset}, we have 

\begin{equation*}
		\begin{split}
P\left(\{t\in \R \;:\; |t|\geq s\}\right)&\leq P\left(\{t\in \R \;:\; |t|> \tfrac{1}{2}s\}\right)\\ &\leq\liminf_{k\to +\infty} \lambda\left(\{x\in \T\;:\; |S_{Kq_{n_k}}(f)(x)-Kc_k|>\tfrac{1}{2}s\}\right)\\ &\leq 
\limsup_{k\to +\infty} \lambda\left(\{x\in \T\;:\;\exists_{i\in \{1,\ldots K\}}\; |S_{q_{n_k}}(f)(R_\alpha^{iq_{n_k}}x)-c_k|\geq \frac{s}{2K}\}\right)\\ &\leq 
\limsup_{k\to+\infty}\lambda\left(\bigcup_{i=0}^{K-1}R_\alpha^{-iq_{n_k}}(A_k(\tfrac{s}{2K}))\right)\leq 2KDe^{-\frac{s}{2K}},
\end{split}
\end{equation*}
the last inequality by Lemma \ref{lem:tech1}. This finishes the proof.
\end{proof}

\subsection{Proof of Lemma \ref{lem:tech1} and \ref{lem:tech2}}\label{sec:lemproof}
Lemma \ref{lem:tech1} and \ref{lem:tech2} are consequences of the following lemma:
\begin{lemma}\label{lem:new}There exists $D,b_0,k_0>0$ such that for every $b\geq b_0$ and $k\geq k_0$, we have
\begin{multline}\label{eq:oldin}
\bigcup_{j=0}^{q_{n_k}-1}\left[-\frac{D^{-1}e^{-b}}{2q_{n_k}}-j\alpha,-j\alpha+\frac{D^{-1}e^{-b}}{2q_{n_k}}\right]\subset A_k(b)
\subset\\
\bigcup_{j=0}^{q_{n_k}-1}\left[-\frac{De^{-b}}{2q_{n_k}}-j\alpha,-j\alpha+\frac{De^{-b}}{2q_{n_k}}\right].
\end{multline}
\end{lemma}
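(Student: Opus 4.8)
The idea is to describe the set $A_k(b)$ as a union of small intervals around the points $-j\alpha$, $0\le j<q_{n_k}$, by analysing precisely where the Birkhoff sum $S_{q_{n_k}}(f)(x)$ is large. The key point is that, by the Denjoy--Koksma philosophy, $S_{q_{n_k}}(f)(x)$ is within a controlled constant of $q_{n_k}\int_\T f\,d\lambda$ plus the dominant term coming from the point of the $q_{n_k}$-orbit of $x$ closest to the singularity at $0$. Concretely, set $h(x)=-\log x-\log(1-x)$ and recall $f=h+g$ with $g\in C^3(\T)$; Denjoy--Koksma gives $|S_{q_{n_k}}(g)(x)-q_{n_k}\int g|<\mathrm{Var}(g)$, so the only source of unboundedness is $h$. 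I would prove a clean statement: there is a constant $C_0>0$ such that for all $k$ and all $x\in\T$,
\[
\Bigl|S_{q_{n_k}}(f)(x)-c_k + \log\bigl(2q_{n_k}x^-_{n_k,min}\bigr)+\log\bigl(2q_{n_k}x^+_{n_k,min}\bigr)\Bigr|\le C_0,
\]
where $c_k=S_{q_{n_k}}(f)(x_k)$ with $x_k=\tfrac1{2q_{n_k}}$ the normalising point. This is obtained by applying Lemma~\ref{lem:DKlog} with $r=1$ \emph{after} integrating, or more directly by the same truncation argument: write $h=h_1+h_2$ with $h_1(x)=-\log x$, truncate $h_1$ outside a $\|q_{n_k-1}\alpha\|$-neighbourhood of $0$ to get a bounded-variation function, apply Denjoy--Koksma, and note the truncated-off part is exactly $-\log x^-_{n_k,min}$ up to $O(1)$ (using $\tfrac1{2q_{n_k}}\le\|q_{n_k-1}\alpha\|\le\tfrac2{q_{n_k}}$ from \eqref{eq:q_n}); symmetrically for $h_2$ and $x^+_{n_k,min}$. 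Evaluating at $x_k$ fixes the additive constant and produces the normalisation by $2q_{n_k}$ inside the logarithms.

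**Deriving the two-sided inclusion.** Granting the displayed estimate, $|S_{q_{n_k}}(f)(x)-c_k|\ge b$ holds essentially when $-\log(2q_{n_k}x^-_{n_k,min})-\log(2q_{n_k}x^+_{n_k,min})\ge b$ (up to the constant $C_0$, and provided $b\ge b_0$ is large enough that the large values are all positive, i.e. $x$ is close to $0$ in the orbit). Since for any $x\ne 0$ in the orbit exactly one of $x^{\pm}_{n_k,min}$ can be genuinely small while the other is $\ge\|q_{n_k-1}\alpha\|\asymp q_{n_k}^{-1}$, the condition reduces to: the unique closest point of the orbit to $0$ lies within distance $\asymp e^{-b}/q_{n_k}$ of $0$. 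Equivalently, $x$ itself lies within $\asymp e^{-b}/(2q_{n_k})$ of one of the points $-j\alpha$, $0\le j<q_{n_k}$ (these being precisely the preimages under the first $q_{n_k}$ iterates of a neighbourhood of $0$; note the $-j\alpha$ come from $R_\alpha^j x$ near $0$). Tracking the constant $C_0$ through the exponential gives the factor $D$: choosing $D=e^{C_0+1}$, say, $|S_{q_{n_k}}(f)(x)-c_k|\ge b$ forces $x^-_{n_k,min}$ or $x^+_{n_k,min}$ to be at most $\tfrac{D e^{-b}}{2q_{n_k}}$, which is the outer inclusion, and conversely distance $\le\tfrac{D^{-1}e^{-b}}{2q_{n_k}}$ from some $-j\alpha$ forces the logarithmic term, hence the sum, to exceed $b$, which is the inner inclusion. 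The intervals are disjoint for $k$ large since their total length $q_{n_k}\cdot\tfrac{De^{-b}}{q_{n_k}}=De^{-b}<1$ and the centres $-j\alpha$ are $\|q_{n_k-1}\alpha\|\asymp q_{n_k}^{-1}$-separated.

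**Main obstacle.** The delicate part is the uniformity of the constant $C_0$ in $k$ and, relatedly, the bookkeeping of the additive constant coming from normalisation at $x_k$: one must check that evaluating the estimate at $x_k=\tfrac1{2q_{n_k}}$ genuinely cancels the $q_{n_k}\int f$ term and leaves only the $\log(2q_{n_k}\cdot)$ corrections, rather than introducing a $k$-dependent drift. A second subtlety is the boundary case $b\ge b_0$: for small $b$ the set $A_k(b)$ is not simply controlled by proximity to $0$, because the $O(1)$ error and the second (non-small) logarithm interfere; this is exactly why the lemma is only claimed for $b\ge b_0$ and $k\ge k_0$, and choosing $b_0$ large enough (depending on $C_0$ and on $\mathrm{Var}(g)$) to absorb all these errors is where care is needed. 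Once Lemma~\ref{lem:new} is in hand, Lemma~\ref{lem:tech1} follows immediately by computing the Lebesgue measure of the two unions of intervals (both equal to $q_{n_k}\cdot\tfrac{D^{\pm1}e^{-b}}{q_{n_k}}=D^{\pm1}e^{-b}$, using disjointness), and Lemma~\ref{lem:tech2} follows because $R_\alpha^{iq_{n_k}}$ shifts each interval $[-j\alpha-\varepsilon,-j\alpha+\varepsilon]$ to $[-(j-i)\alpha-\varepsilon,-(j-i)\alpha+\varepsilon]$ up to wraparound, and the diophantine condition $\eqref{eq:diophcon}$ (forcing $a_{n_k+1}\asymp K^2+L^2$, hence $\|q_{n_k}\alpha\|\asymp q_{n_k}^{-1}$ \emph{not} much smaller) guarantees that for $1\le |i|\le L$ the point $-(j-i)\alpha$ stays at distance $\gg L e^{-b}/q_{n_k}$ from $0$, so the shifted copy of $A_k(b/2L)$ avoids $A_k(b)$ — here one needs $|i|\|q_{n_k}\alpha\|$ to dominate the relevant interval radii, which is precisely what the balanced choice of $a_{n_k+1}$ secures and what the very-rigid choice would destroy.
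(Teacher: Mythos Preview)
Your overall picture is right, and the displayed estimate you aim for is indeed equivalent to the lemma. But the argument you sketch has a genuine gap: truncating $h_1(x)=-\log x$ outside a $\|q_{n_k-1}\alpha\|$-neighbourhood of $0$ and applying Denjoy--Koksma produces an error of size ${\rm Var}(\bar h_1)\asymp\log q_{n_k}$, not $O(1)$, and this error is $x$-dependent---it does \emph{not} cancel when you subtract the value at $x_k$. Likewise, integrating Lemma~\ref{lem:DKlog} with $r=1$ along a path from $x_k$ to $x$ gives an error $O(q_{n_k})\cdot|x-x_k|$, which is $O(q_{n_k})$ for generic $x$. Either route yields only $C_0=O(\log q_{n_k})$, which makes $D$ in \eqref{eq:oldin} blow up with $k$ and destroys the lemma.

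The paper closes this gap in two separate steps. First (its Claim~A), it Taylor-expands $S_{q_{n_k}}(h)(x)-S_{q_{n_k}}(h)(y_s)$ around the midpoint $y_s$ of the partition interval $I_s\ni x$, using Lemma~\ref{lem:DKlog} for \emph{every} $r\ge1$; summing the resulting geometric series gives exactly your formula, but with $c_k$ replaced by the local reference value $S_{q_{n_k}}(h)(y_s)$. Second (its Claim~B), it shows $|S_{q_{n_k}}(h)(y_s)-c_k^h|\le C'''$ \emph{uniformly in $s$}: via the cocycle identity and the mean value theorem this reduces to bounding $|S_\ell(h')(\theta)|\cdot\|q_{n_k}\alpha\|$ for $\ell<q_{n_k}$, and the estimate $|S_\ell(h')(\theta)|\le C''q_{n_k}$ is supplied by Lemma~\ref{lem:Best}. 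That lemma is where the full diophantine condition \eqref{eq:diophcon} enters---not only the balanced size of $a_{n_k+1}$ (which you correctly use for Lemma~\ref{lem:tech2}), but crucially the subexponential bound $a_{n_k+1-n}\le e^{cn}$ on all earlier partial quotients, which controls the Ostrowski-decomposed derivative sums. You invoke \eqref{eq:diophcon} only for Lemma~\ref{lem:tech2}, but it is already essential for Lemma~\ref{lem:new} itself; without it the constant $C_0$ cannot be made uniform in $x$.
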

Before we prove Lemma \ref{lem:new} let us show how it implies Lemma \ref{lem:tech1} and \ref{lem:tech2}.
\begin{proof}[of Lemma~{\rm\ref{lem:tech1}}]
The upper bound in Lemma \ref{lem:tech1} follows strightforward from the second inclusion in Lemma \ref{lem:new}. For the lower bound, by the first inclusion in Lemma \ref{lem:new}, it is enough to show that for $v,u\in \{0,\ldots,q_{n_k}-1\}$, $v\neq u$, we have
$$
\left[-\frac{D^{-1}e^{-b}}{2q_{n_k}}-v\alpha,-v\alpha+\frac{D^{-1}e^{-b}}{2q_{n_k}}\right]\cap \left[-\frac{D^{-1}e^{-b}}{2q_{n_k}}-u\alpha,-u\alpha+\frac{D^{-1}e^{-b}}{2q_{n_k}}\right]=\emptyset.
$$
This however follows from the fact that $D^{-1}e^{-b}<1/2$ and 
$$
\|v\alpha-u\alpha\|=\|(v-u)\alpha\|\geq \min_{0< j\leq q_{n_k}-1}\|j\alpha\|\geq \frac{1}{2q_{n_k}}.
$$
This finishes the proof.
\end{proof}

Now let us prove Lemma \ref{lem:tech2}:

\begin{proof}[of Lemma~{\ref{lem:tech2}}]
Let $b>2Lb_0$ ($b_0$ from Lemma \ref{lem:new}) and let $x\in A_k(\frac{b}{2L})$. By the right inclusion in Lemma \ref{lem:new} it follows that there exists $j\in\{0,\ldots, q_{n_k}-1\}$ such that 
\be\label{eq:as1}
\|x-(-j\alpha)\|\leq \frac{De^{\frac{-b}{2L}}}{2q_{n_k}}.
\ee
We will show that for every $i\in \{-L,\ldots,-1,1,\ldots, L\}$, we have
\be\label{eq:ns}
\min_{0\leq s\leq q_{n_k-1}}\|R_\alpha^{iq_{n_k}}x-(-s\alpha)\|\geq \frac{2De^{-b}}{2q_{n_k}}.
\ee
This together with the right inclusion in Lemma \ref{lem:new} implies that for every \\
$i\in \{-L,\ldots,-1,1,\ldots, L\}$ we have $R_\alpha^{iq_{n_k}}x\notin A_k(b)$ and therefore the proof of Lemma \ref{lem:tech2} is completed. So we only need to show \eqref{eq:ns}. Fix $i\in \{-L,\ldots,-1,1,\ldots, L\}$. 
Notice that by \eqref{eq:diophcon}, we have $\frac{q_{n_k+1}}{q_{n_k}}\leq a_{n_k+1}+1\leq 300(K^2+L^2)$. Thus by \eqref{eq:as1} we obtain 
\begin{multline}\label{eq:nee1}
\|R_\alpha^{iq_{n_k}}x-(-j\alpha)\|=\|x-(-j\alpha)+iq_{n_k}\alpha\|\geq 
|i|\|q_{n_k}\alpha\|-\|x-(-j\alpha)\|\geq\\
 \frac{1}{2q_{n_k+1}}-\frac{De^{\frac{-b}{2L}}}{2q_{n_k}}\geq \frac{{(300(K^2+L^2))^{-1}}-De^{\frac{-b}{2L}}}{2q_{n_k}}\geq \frac{2De^{-b}}{2q_{n_k}},
\end{multline}
by enlarging $b$ if necessary. This shows \eqref{eq:ns} for $s=j$. Assume now $0\leq s\leq q_{n_k}-1$ and $s\neq j$. By \eqref{eq:diophcon}, we have $\frac{q_{n_k+1}}{q_{n_k}}\geq a_{n_k+1}\geq 100(K^2+L^2)$. Moreover $\|(s-j)\alpha\|\geq \min_{0\leq r\leq q_{n_k}-1}\|r\alpha\|\geq \frac{1}{q_{n_k}}$. Therefore and by \eqref{eq:as1}, we have
\begin{multline}\label{eq:nee2}
\|R_\alpha^{iq_{n_k}}x-(-s\alpha)\|=\|(s-j)\alpha +x-(-j\alpha)+iq_{n_k}\alpha\|\geq \\
\|(s-j)\alpha\|- |i|\|q_{n_k}\alpha\|-\|x-(-j\alpha)\|\geq 
\frac{1}{2q_{n_k}}-\frac{|i|}{100(K^2+L^2)q_{n_k}}-\frac{De^{\frac{-b}{2L}}}{2q_{n_k}}\geq \frac{2De^{-b}}{2q_{n_k}},
\end{multline}
the last inequality since $|i|\leq L$ and (by enlarging $b$ if necessary), 
$1-\frac{2|i|}{200(K^2+L^2)}-De^{\frac{-b}{2L}}>2De^{-b}$.
This finishes the proof of \eqref{eq:ns} for $s\neq j$. By \eqref{eq:nee1} and \eqref{eq:nee2} it follows that \eqref{eq:ns} holds. This finishes the proof of Lemma \ref{lem:tech2}.
\end{proof}

So it only remains to prove Lemma \ref{lem:new}:

\begin{proof}[of Lemma~{\rm\ref{lem:new}}]
Let $h:\T\to \R_+$, $h(x)=-(\log x +\log(1-x))$. Then (see \eqref{eq:logsym} and recall that we assume that $C_f=1$) $f(x)=h(x)+g(x)$, where $g\in C^3(\T)$. By Denjoy-Koksma inequality (and triangle inequality), it follows that for every $x,y\in \T$, 
\be\label{eq:dkd}
|S_{q_{n_k}}(g)(x)-S_{q_{n_k}}(g)(y)|<2{\rm Var} (g).
\ee
Recall also that $c_k:=S_{q_{n_k}}(h)(x_k)+S_{q_{n_k}}(g)(x_k)$. Let $c_k^h:=S_{q_{n_k}}(h)(x_k)$.

Let 
\be\label{eq:defV}V_k^h(b):=\{x\in \T\;:\; |S_{q_{n_k}}(h)(x)-c_k^h|\geq b\}.
\ee
 We claim that for $b\geq 4{\rm Var}(g)$, we have 
\be\label{eq:neweq}
V_k^h\left(b+4{\rm Var}(g)\right)\subset A_k(b)\subset V_k^h\left(b-4{\rm Var}(g)\right).
\ee
Indeed, this just follows from $S_{q_{n_k}}(f)(x)-c_k=[S_{q_{n_k}}(h)(x)-c_k^h]+[S_{q_{n_k}}(g)(x)-S_{q_{n_k}}(g)(x_k)]$ and from \eqref{eq:dkd} (for $x$ and $x_k$). By \eqref{eq:neweq}, Lemma \ref{lem:new} follows by showing that there exists $D',b_0',k_0'>0$  such that for $k\geq k'_0$ and $b'\geq b'_0$, we have 
\begin{multline}\label{eq:newin}
\bigcup_{j=0}^{q_{n_k}-1}\left[-\frac{D'^{-1}e^{-b'}}{2q_{n_k}}-j\alpha,-j\alpha+\frac{D'^{-1}e^{-b'}}{2q_{n_k}}\right]\subset V^h_k(b')\subset\\
\bigcup_{j=0}^{q_{n_k}-1}\left[-\frac{D'e^{-b'}}{2q_{n_k}}-j\alpha,-j\alpha+\frac{D'e^{-b'}}{2q_{n_k}}\right].
\end{multline}
Indeed, by the left inclusion in \eqref{eq:neweq} for $b'=b+4{\rm Var}(g)$ and then the right inclusion in \eqref{eq:neweq} for $b'=b-4{\rm Var}(g)$) it then follows that \eqref{eq:oldin} holds with $D=D'e^{4{\rm Var}(g)}$, $b_0=b'_0+4{\rm Var}(g)$ and $k_0=k_0'$.
Therefore it is enough to show \eqref{eq:newin}. To simplify the notation we will drop all the apostrophes in the proof of \eqref{eq:newin}.
\bigskip

Consider the partition  $\{I_1,\ldots, I_{q_{n_k}}\}$ of $\T$ by points $\{-i\alpha\}_{i=0}^{q_{n_k}-1}$ and denote $I_s=(v_s,w_s)$. Then \eqref{eq:newin} is a straightforward consequence of the following: there exists $D,k_0,b_0$ such that for every $k\geq k_0$, $b\geq b_0$ and every $s\in \{1,\ldots,q_{n_k}\}$, we have 
\begin{multline}\label{eq:newin2}
  \left(v_s,v_s+ \frac{D^{-1}e^{-b}}{2q_{n_k}}\right)\cup \left(w_s-\frac{D^{-1}e^{-b}}{2q_{n_k}},w_s\right) \subset V_k^h(b)\cap I_s\subset\\
   \left(v_s,v_s+ \frac{De^{-b}}{2q_{n_k}}\right)\cup \left(w_s-\frac{De^{-b}}{2q_{n_k}},w_s\right).
\end{multline}
Therefore it is enough to show \eqref{eq:newin2}. Let $y_s\in I_s$ be the midpoint of $I_s$, i.e. $y_s:=v_s+\frac{w_s-v_s}{2}$ and let 
$$
W_{k,s}(b):=\{x\in I_s\;:\;|S_{q_{n_k}}(h)(x)-S_{q_{n_k}}(h)(y_s)|\geq b\}.
$$
Notice that by \eqref{eq:diophcon}, we have $q_{n_k+1}\geq 100(K^2+L^2)q_{n_k}\geq 200 q_{n_k}$.
Below we will also use the following estimates on the length of $I_s$: for every $s\in\{1,\ldots, q_{n_k}-1\}$, we have  (see \eqref{eq:lenn}) 
\be\label{eq:lenin}
\frac{99}{100q_{n_k}}\leq \frac{1}{q_{n_k}}-\frac{2}{q_{n_k+1}}\leq |I_s|\leq \frac{1}{q_{n_k}}+\frac{2}{q_{n_k+1}}\leq \frac{101}{100q_{n_k}}.
\ee

We have the following two claims:\\
\textbf{Claim A.} There exists $D,k_0,b_0$ such that for every $k\geq k_0$, $b\geq b_0$ and every $s\in \{1,\ldots,q_{n_k}\}$, we have 
\begin{multline}\label{eq:newin3}
  \left(v_s,v_s+ \frac{D^{-1}e^{-b}}{2q_{n_k}}\right)\cup \left(w_s-\frac{D^{-1}e^{-b}}{2q_{n_k}},w_s\right) \subset W_{k,s}(b)\subset \\
   \left(v_s,v_s+ \frac{De^{-b}}{2q_{n_k}}\right)\cup \left(w_s-\frac{De^{-b}}{2q_{n_k}},w_s\right).
\end{multline}
\textbf{Claim B.} There exists a (global) constant $C'''>0$ such that for every $k\in \N$, every  $s\in\{1,\ldots,q_{n_k}\}$ and every $b>0$, we have 
\be\label{eq:wv}
W_{k,s}(b+C''')\subset V_k^h(b)\cap I_s\subset W_{k,s}(b-C''').
\ee
Notice that \textbf{Claim A.} and \textbf{Claim B.} together imply \eqref{eq:newin2}: the left inclusion in \eqref{eq:newin3} for $b-C'''$ together with left inclusion in \eqref{eq:wv} imply that 
$$
\left(v_s,v_s+ \frac{D^{-1}e^{-C'''}e^{-b}}{2q_{n_k}}\right)\cup \left(w_s-\frac{D^{-1}e^{-C'''}e^{-b}}{2q_{n_k}},w_s\right)\subset V_k^h(b)\cap I_s.
$$
Analogously, the right inclusion in \eqref{eq:newin3}  for $b+C'''$ together with right inclusion in \eqref{eq:wv} imply that 
$$
V_k^h(b)\cap I_s\subset\\
   \left(v_s,v_s+ \frac{De^{C'''}e^{-b}}{2q_{n_k}}\right)\cup \left(w_s-\frac{De^{C'''}e^{-b}}{2q_{n_k}},w_s\right).
$$
This finishes the proof of \eqref{eq:newin2} and hence also the proof of Lemma \ref{lem:new}. So it remains to prove \textbf{Claim A.} and \textbf{Claim B.}

\noindent\emph{Proof of \textbf{Claim A}.} 
By analyticity of $\log(\cdot)$ and $\log(1-\cdot)$ on $(0,1)$, we have for $s=1,\ldots,q_{n_k}$ and $x\in I_s$
\be\label{eq:najwazniejszywzor}
S_{q_{n_k}}(h)(x)-S_{q_{n_k}}(h)(y_{s})=\sum_{r=1}^{+\infty}\frac{S_{q_{n_k}}(h^{(r)})(y_{s})}{r!}(x-y_{s})^r
\ee
Since $y_s$ is the midpoint of $I_s=(v_s,w_s)$ and the endpoints of $I_s$ are in the in the orbit of $\alpha$ it follows that $y^+_{n_k,min}=y^-_{n_k,min}=\frac{|I_s|}{2}$.
Therefore by Lemma \ref{lem:DKlog} for $r\geq 1$, we have 
\be\label{eq:neab}
\left|S_{q_{n_k}}(h^{(r)})(y_{s})-[(-1)^{r}+1](r-1)!\left(\frac{2}{|I_s|}\right)^r\right|
|\leq 100(r-1)!\|q_{n_k-1}\alpha\|^{-r}.
\ee
Notice that by \eqref{eq:diophcon} it follows that $\|q_{n_k-1}\alpha\|\geq \frac{98}{100q_{n_k}}$ (indeed, $\|q_{n_k}\alpha\|{ q_{n_k-1}}\le \|q_{n_k}\alpha\|\frac{q_{n_k+1}}{a_{n_{k}+1}}\le\frac{1}{100}$). This together with \eqref{eq:lenin} and again \eqref{eq:diophcon} implies that $\frac{|x-y_s|}{\|q_{n_k-1}\alpha\|}\leq \frac{100q_{n_k}|I_s|}{2\cdot 98}\leq \frac{101}{196}<1$. Therefore and by \eqref{eq:najwazniejszywzor} and \eqref{eq:neab}, there exists $C''''>0$ such that
$$
\left|S_{q_{n_k}}(h)(x)-S_{q_{n_k}}(h)(y_{s})+\sum_{r=1}^{+\infty}\frac{(-1)^{r+1}-1}{r}\left(\frac{2(x-y_{s})}{|I_s|}\right)^r\right|\leq C''''.
$$
This is equivalent to 
$$
\left|[S_{q_{n_k}}(h)(x)-S_{q_{n_k}}(h)(y_{s})]+
\log\left(1+\frac{2(x-y_{s})}{|I_s|}\right)+\log\left(1-\frac{2(x-y_{s})}{|I_s|}\right)\right|\leq C'''',
$$
which, since $I_s=(v_s,w_s)$ and $y_s$ is the midpoint of $I_s$ is equivalent to 
\be\label{eq:host}
\left|[S_{q_{n_k}}(h)(x)-S_{q_{n_k}}(h)(y_{s})]+
\log\left(\frac{2(x-v_s)}{|I_s|}\right)+\log\left(\frac{2(w_s-x)}{|I_s|}\right)\right|\leq C''''.
\ee
Let $W^{\log}_s(b):=\{x\in I_s\;:\; -\log\left(\frac{2(x-v_s)}{|I_s|}\right)-\log\left(\frac{2(w_s-x)}{|I_s|}\right)\geq b\}$. Then by \eqref{eq:host} and the definition of $W_{k,s}(b)$, we have
\be\label{eq:host2}
W^{\log}_s(b+C'''')\subset W_{k,s}(b)\subset W^{\log}_s(b-C'''').
\ee
Moreover, $x\in W^{\log}_s(b+C'''')$ iff $\log\left(\frac{2(x-v_s)}{|I_s|}\right)+\log\left(\frac{2(w_s-x)}{|I_s|}\right)\leq -b-C''''$, which holds if  $|x-v_s|\leq D^{-1}e^{-b}|I_s| $ or if $|w_s-x|\leq D^{-1}e^{-b}|I_s|$ for some (global) constant $D>1$ ($D$ depending on $C''''$). By \eqref{eq:lenin}, it follows that (by enlarging $D$ if necessary)
\be\label{eq:host3}
\left(v_s,v_s+\frac{D^{-1}e^{-b}}{2q_{n_k}}\right)\cup\left(w_s-\frac{D^{-1}e^{-b}}{2q_{n_k}},w_s\right)\subset W^{\log}_s(b+C'''').
\ee
Similarly (by enlarging $D$ if necessary), it follows that for $x\in I_s$ if $|x-v_s|\geq De^{-b}|I_s|$ and $|w_s-x|\geq De^{-b}|I_s|$, then $x\notin W^{\log}_s(b-C'''')$. Therefore 
\be\label{eq:host4}
W^{\log}_s(b-C'''')\subset \left(v_s,v_s+\frac{De^{-b}}{2q_{n_k}}\right)\cup\left(w_s-\frac{De^{-b}}{2q_{n_k}},w_s\right).
\ee

Finally, \eqref{eq:host2}, \eqref{eq:host3} and \eqref{eq:host4} give \eqref{eq:newin3} and hence finish the proof of \textbf{Claim A}.

\noindent\emph{Proof of \textbf{Claim B}.} 
We will first show that there exists a (global) constant $C'''>0$ such that for every $k\in \N$ and every $s\in \{1,\ldots, q_{n_k}\}$, we have
\be\label{eq:rts}
|S_{q_{n_k}}(h)(y_s)-c_k^h|<C'''.
\ee
Recall that  $x_k=\frac{1}{2q_{n_k}}$ and $c_k^h:=S_{q_{n_k}}(h)(x_k)$. The left endpoint of $I_s=(v_s,w_s)$ is equal to $-\ell\alpha$ for some $\ell\in [0,\ldots,q_{n_k}-1]$. Consider the point $x_k-\ell \alpha$. By \eqref{eq:lenin}, it follows that $x_k-\ell\alpha\in I_s$, and moreover, again by  \eqref{eq:lenin}
it follows that 
\begin{equation}\label{eq:1/4}
\min\{|x_k-\ell\alpha-v_s|,|w_s-(x_k-\ell\alpha)|\}\geq\frac{49}{100q_{n_k}} \geq\frac{1}{4q_{n_k}}. 
\end{equation}
By cocycle identity and mean value theorem, we have 
\begin{equation}\label{eq:funy}
	\begin{split}
	|c_k^h-S_{q_{n_k}}(h)(x_k-\ell\alpha)|&=|S_{\ell}(h)(x_k-\ell\alpha)-S_{\ell}(h)(x_k-\ell\alpha+q_{n_k}\alpha)|\\&=|S_{\ell}(h')(\theta)|\|q_{n_k}\alpha\|,
\end{split}
\end{equation}
where $\theta\in [\min(x_k-\ell\alpha,x_k-\ell\alpha+q_{n_k}\alpha),\max(x_k-\ell\alpha,x_k-\ell\alpha+q_{n_k}\alpha)]$. Notice that since $\|q_{n_k}\alpha\|\leq \frac{1}{q_{n_k+1}}\leq \frac{1}{100q_{n_k}}$ (see \eqref{eq:diophcon}), by \eqref{eq:1/4} it follows that $\theta_{n,min}\geq \frac{1}{8q_{n_k}}$. Therefore by Lemma \ref{lem:Best} for $c''=1/8$, it follows that $|S_{\ell}(h')(\theta)|\leq C_2q_{n_k}$. Hence, by \eqref{eq:funy}, we have 
\be\label{eq:nesr}
|c_k^h-S_{q_{n_k}}(h)(x_k-\ell\alpha)|\le C_2.
\ee
Moreover, for some $\theta'\in [\min(y_s,x-\ell\alpha),\max(y_s,x-\ell\alpha)]$, we have 
\be\label{eq:nesr2}
|S_{q_{n_k}}(h)(y_s)-S_{q_{n_k}}(h)(x_k-\ell\alpha)|=|S_{q_{n_k}}(h')(\theta')|\cdot|I_s|.
\ee
Similarly to the estimates on $\theta$, it follows that $\theta'_{n_k,min}\geq \frac{1}{8q_{n_k}}$ ($y_s$ is the midpoint of $I_s$ and $x-\ell\alpha$ is $\frac{1}{2q_{n_k}}$ distant from the left endpoint of $I_s$). Therefore, by Lemma \ref{lem:DK} for $\theta'$, \eqref{eq:nesr2} and \eqref{eq:lenin} imply that 
$$
|S_{q_{n_k}}(h)(y_s)-S_{q_{n_k}}(h)(x_k-\ell\alpha)|\leq C_3.
$$
This finishes the proof of \eqref{eq:rts}. By \eqref{eq:rts}  and triangle inequality it follows that if for $x\in I_s$ we have
$|S_{q_{n_k}}(h)(x)-S_{q_{n_k}}(h)(y_s)|\geq b+C'''$, then $|S_{q_{n_k}}(h)(x)-c_k^h|\geq b$ and similarly if $x\in I_s$ satisfies $|S_{q_{n_k}}(h)(x)-c_k^h|\geq b$, then $|S_{q_{n_k}}(h)(x)-S_{q_{n_k}}(h)(y_s)|\geq b-C'''$. This, by the definition of $W_{k,s}(b)$ and $V_k^h(b)$ implies \eqref{eq:wv} and finishes the proof of \textbf{Claim B.}

The proof of Lemma \ref{lem:new} is thus finished.

\end{proof}

\section{Flows over IETs under piecewise constant roof function}\label{sec:pico}
In this section we will prove Theorem \ref{Main}. The proof of this result is long and divided into sections. It is worth to mention that the construction of sequence of towers which is one of the main points of the proof is very similar to the construction given in \cite{BFr} where authors used it to differentiate between special flows over IETs and their inverses. The construction used there is however insufficient in our case because the authors in \cite{BFr} did not control the form of the whole limit of the just a large part of it. The measure of the part they did not control however, in our case would be the same as the part which differentiates between limit measures for different time scalings of considered flows. The construction we present in this proof is more complicated, it allows though to compute the whole limit measure, which in the past was achievable only for rotations. 	

\begin{proof}[of Theorem~{\rm\ref{Main}}.]
	Fix a Rauzy graph $\mathfrak{R}$ of irreducible permutations. Then for almost every $(\pi,\la)\in\mathfrak{R}\times\Lambda^{\mathcal A}$ there exists a natural number $N(\pi,\la)$ such that the $N(\pi,\la)$-th Rauzy matrix $A^{N(\pi,\la)}(\pi,\la)$ is a positive matrix. Fix such $(\pi_0,\la_0)$ and denote $N:=N(\pi_0,\la_0)$ and $B:=A^{N}(\pi_0,\la_0)$. Moreover, assume that $\hat\pi=\pi^{N}_0$ satisfies
	\[
	\hat\pi_0^{-1}(1)=\hat\pi_1^{-1}(d)\ \text{and}\ \hat\pi_0^{-1}(d)=\hat\pi_1^{-1}(1).
	\]
	This is possible, because in view of Theorem \ref{twpierost} every Rauzy graph contains such permutation.
	
	For any positive $\,d\times d$ matrix $B$ let
	\[
	\rho(B)=\max_{1\le i,j,k\le d}\frac{B_{ij}}{B_{ik}}\ge 1.
	\]
	Set $b_{j}=\sum_{i=1}^dB_{ij}$ and let $A$ be any nonnegative
	nonsingular $\,d\times d$ matrix. The following properties are well-known and 
	easy to prove
	\begin{equation}\label{ro1}
	b_j\le\rho(B)b_k\quad\text{ for any }\quad1\le j,k\le d,
	\end{equation}
	\begin{equation}\label{ro2}
	\rho(AB)\le\rho(B).
	\end{equation}
	
	\subsection{The choice of a set of recurrence.}\label{sec:rec} Let $K,L\in\N\setminus\{0\}$ be two distinct numbers such that $K>L$. Let 
	\begin{equation}\label{defep}
	0<\ep<\min\big(\frac{1}{100\rho(B)},\frac{1}{100K}\big)
	\end{equation}
	and 
	\begin{equation}\label{defde}
	\frac{\ep}{3}<\de'<\de<\frac{\ep}{2}.
	\end{equation}
	
	For every $n\in\N$ and $\la\in\Lambda^{\mathcal A}$ define 
	\[
	Q_n(\la)=\la_{\hat\pi_0^{-1}(1)}-(n-2)\big(\sum_{j=2}^d\la_{\hat\pi_0^{-1}(j)} \big).
	\]
	Let $Y^{K}_n\subset\Lambda^{\mathcal A}$ be the set of vectors $\la$ satisfying $Q_n(\la)>0$ and the following conditions 
	\begin{equation}\label{ineq1}
	\frac{1}{2}-\de+\frac{3(\de-\de')}{8}<\frac{\la_{\hat\pi_0^{-1}(1)}-(n-1)\big(\sum_{j=2}^d\la_{\hat\pi_0^{-1}(j)}\big)}{Q_n(\la)}<\frac{1}{2}-\de+\frac{5(\de-\de')}{8},
	\end{equation} 
	\begin{equation}\label{ineq2}
	\frac{1}{2}+\de'+\frac{3(\de-\de')}{8}<\frac{\la_{\hat\pi_0^{-1}(d)}}{Q_n(\la)}<\frac{1}{2}+\de'+\frac{5(\de-\de')}{8},
	\end{equation}
	\begin{equation}\label{ineq3}
	\frac{1}{Q_n(\la)}\sum_{j=2}^{d-1}\la_{\hat\pi_0^{-1}(j)}<\frac{1}{n+\rho(B)}.
	\end{equation}
	
	It is an open set and non-empty. Indeed, if the vector $\hat\la\in\R^{\mathcal A}$ is given by
	\[
	\hat\la_{\hat\pi_0^{-1}(2)}=\ldots=\hat\la_{\hat\pi_0^{-1}(d-1)}:=\frac{\de-\de'}{(d-1)(n+\rho(B))};
	\]
	\[
	\hat\la_{\hat\pi_0^{-1}(d)}:=\frac{1}{2}+\de'+\frac{3}{8}(\de-\de')+\frac{\de-\de'}{(d-1)(n+\rho(B))};
	\]
	and
	\[
	\hat\la_{\hat\pi_0^{-1}(1)}:=\frac{1}{2}-\de+\frac{5}{8}(\de-\de')-2\frac{\de-\de'}{(d-1)(n+\rho(B))}
	+(n-1)\sum_{j=2}^d\hat\la_{\hat\pi_0^{-1}(j)},
	\]
	then $\hat\la/|\hat\la|\in Y^{K}_n$.
	
	We will denote 
	\be\label{eq:tilte}
	\tilde\la_{\hat\pi_0^{-1}(1)}:=\la_{\hat\pi_0^{-1}(1)}-(n-1)\big(\sum_{j=2}^d\la_{\hat\pi_0^{-1}(j)}\big).
	\ee
	Then in view of \eqref{ineq1} and \eqref{ineq2} we obtain
	\begin{equation}\label{ineq4}
	\frac{\ep}{2}<\frac{2\ep}{3}-\frac{\ep}{16}<\de'+\de-\frac{\de-\de'}{4}
	<\frac{\la_{\hat\pi_0^{-1}(d)}-\tilde\la_{\hat\pi_0^{-1}(1)}}{Q_n(\la)}<\de'+\de+\frac{\de-\de'}{4}<2\de<\ep.
	\end{equation}
	
	Define
	\[
	V_n^{K}:=\{(\pi_0,\frac{1}{|B\la|}B\la);\,\la\in Y_n^{K}\}.
	\]
	Since $Y_n^{K}$ is a non-empty open set, then so is $V_n^{K}$.
	Since the normalised Rauzy-Veech induction $\tilde R$ is ergodic and recurrent (see Theorem \ref{powr}), there exists a set of full Lebesgue measure $\Upsilon\subset\mathfrak{R}\times\Lambda^{\mathcal A}$ such that for every $(\pi,\la)\in\Upsilon$ and every $K,L\in\N\setminus\{0\}$ there exists an increasing sequence of natural numbers $\{r_n^{K}\}_{n\in\N}$ satisfying
	\[
	\tilde R^{r_n^{K}-N-(d-1)(n-1)}(\pi,\la)\in \{\hat\pi\}\times V^{K}_n.
	\]
	Then in view of Remark \ref{mnozmac} we also have
	\be\label{defrn}
	\tilde{R}^{r_n^{K}-(d-1)(n-1)}(\pi,\la)\in \{\hat\pi\}\times Y_n^{K}.
	\ee
	
	\begin{figure}[h]
		\includegraphics[scale=0.15]{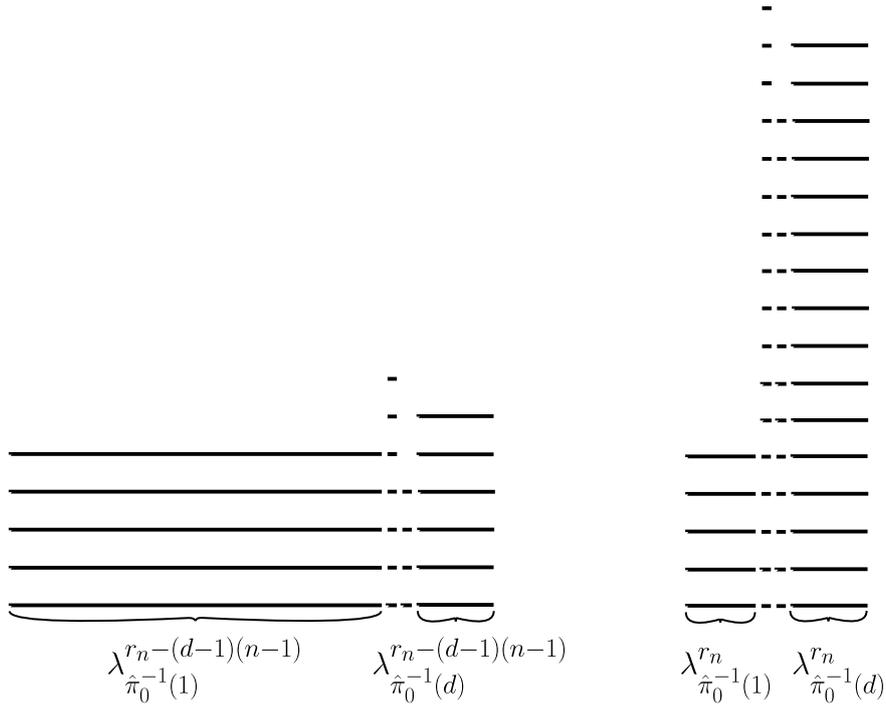}
		$\quad\quad\quad\quad$
		\caption{The tower after $r_n-(d-1)(n-1)$ steps of Rauzy-Veech induction (left) and after $r_n$ steps of Rauzy-Veech induction (right).\label{rys1}}
	\end{figure}
	
	\subsection{Decomposition into towers.}
	{ Fix $(\pi,\la)\in\Upsilon$ and $K,L\in\N$, $K>L$. Moreover, denote by $T$ the IET corresponding to $(\pi,\la)$ and write $r_n$ instead of $r_n^{K}$ (which is the sequence defined in \eqref{defrn}).  For any $n\in\N$ consider ${R}^{r_n}(\pi,\la)={R}^{(d-1)(n-1)}\circ{R}^{r_n-(d-1)(n-1)}(\pi,\la)$. 
	From now on we write $I^n$ instead of $I^{r_n}$ for the domain of ${R}^{r_n}(\pi,\la)$ and $I^n_a$ instead of $I^{r_n}_a$ for the exchanged interval corresponding to $a\in\mathcal A$. Let $s_a^n:=\sum_{b\in\mathcal A}A^{r_n-(d-1)(n-1)}_{ba}(\pi,\la)$ for any $a\in\mathcal A$. 
	Recall that by \eqref{ro1} and \eqref{ro2} since $A^{r_n-(d-1)(n-1)}(\pi,\la)=CB$, where $C$ is a non-negative matrix, {we have \be\label{eq:distest}
		s_a^n\le \rho(B)s_b^n\;\text{ for all }a,b\in\mathcal A. 
		\ee}
	We now show that we obtain the following decomposition of $I=[0,1)$ into Rokhlin towers.
	\begin{itemize}
		\item The domain $I^n$ of ${R}^{r_n}(\pi,\la)$ has length $Q_n(\la^{r_n-(d-1)(n-1)})$;\\
		\item The permutation of IET corresponding to ${R}^{r_n}(\pi,\la)$ is $\hat\pi$;\\
		\item The interval $I^n_{\hat\pi_0^{-1}(1)}$ has length $\la_{\hat\pi_0^{-1}(1)}^{r_n}=\la_{\hat\pi_0^{-1}(1)}^{r_n-(d-1)(n-1)}-(n-1)\big(\sum_{j=2}^d\la_{\hat\pi_0^{-1}(j)}^{r_n-(d-1)(n-1)} \big)$ and the height of the tower obtained by Rauzy-Veech induction is $s_1^n:=s_{\hat\pi_0^{-1}(1)}^n$;\\
		\item Intervals $I^{n}_{\hat\pi_0^{-1}(j)}$ for $j=2,\ldots,d-1$ have lengths $\la_{\hat\pi_0^{-1}(j)}^{r_n}:=\la_{\hat\pi_0^{-1}(j)}^{r_n-(d-1)(n-1)}$ respectively and if $s_j^n:=s_{\hat\pi_0^{-1}(j)}^n$ then the corresponding towers have height equal to $s_j^n+(n-1)s_1^n$;\\
		\item The interval $I^n_{\hat\pi_0^{-1}(d)}$ has length $\la_{\hat\pi_0^{-1}(d)}^{r_n}=\la_{\hat\pi_0^{-1}(d)}^{r_n-(d-1)(n-1)}$ and if $s_d^n:=s_{\hat\pi_0^{-1}(d)}^n$ then the height of the corresponding tower is $s_d^n+(n-1)s_1^n$.
	\end{itemize}
	We show this for $n=2$, for $n\ge 3$ the proof is analogous by proceeding inductively. In view of Remark \ref{rozkladnawieze} we have that $R^{r_2-(d-1)}(\pi,\la)$ is an IET $(\hat\pi,\la^{r_2-(d-1)})$, where $\la^{r_2-(d-1)}$ satisfies conditions \eqref{ineq1}, \eqref{ineq2} and \eqref{ineq3} and the interval $I$ is decomposed into towers over intervals $I_{\hat\pi_0^{-1}(j)}^{r_2-(d-1)}$ of height $s_j^2$ for every $j=1,\ldots,d$ (see left picture in Figure \ref{rys1}). Note that in particular by \eqref{ineq1} we obtain that
	\[
	\la_{\hat\pi_0^{-1}(1)}^{r_2-(d-1)}>\sum_{j=2}^d\la_{\hat\pi_0^{-1}(j)}.
	\]
	Thus for $k=1,\ldots,d-1$, $k$-th step of Rauzy-Veech induction is the first return maps to the interval $\Big[0,|\la^{r_2-(d-1)}|-\sum_{j=d-k+1}^d\la^{r_2-(d-1)}_{\hat\pi_0^{-1}(j)}\Big)$. Hence 
	\[
	I^2=\sum_{j=1}^d\la^{r_2-(d-1)}_{\hat\pi_0^{-1}(j)}-\sum_{j=2}^d\la^{r_2-(d-1)}_{\hat\pi_0^{-1}(j)}=\la^{r_2-(d-1)}_{\hat\pi_0^{-1}(1)}=Q_2(\la^{r_2-(d-1)}).
	\]
	
	Moreover, note that the permutation $\hat\pi^1$ obtained after one step of Rauzy Veech induction is given by
	\[
	\begin{split}
	\hat\pi^1_0(\hat\pi_0^{-1}(1))&=1;\\
	\hat\pi^1_0(\hat\pi_0^{-1}(d))&=2=d-(d-1)+1;\\
	\hat\pi^1_0(\hat\pi_0^{-1}(j))&=j+1\quad\text{for}\quad j=2,\ldots,d-1;\\
	\hat\pi^1_1&=\hat\pi_1.
	\end{split}
	\]
	Furthermore, for $k=2,\ldots,d-1$ by induction we get
		\[
		\begin{split}
		\hat\pi^k_0(\hat\pi_0^{-1}(1))&=1\\
		\hat\pi^k_0(\hat\pi_0^{-1}(j))&=2=j-(d-1)+k\quad\text{for}\quad j=d-k+1,\ldots,d\\
		\hat\pi^k_0(\hat\pi_0^{-1}(j))&=j+k\quad\text{for}\quad j=2,\ldots,d-k\\
		\hat\pi^k_1&=\hat\pi_1
		\end{split}
		\]
	By considering $k=d-1$ we get $\hat\pi^{d-1}=\hat\pi$. 
	
	Moreover, since by performing $(d-1)$ steps of Rauzy-Veech induction we only ,,cut'' from the interval corresponding to the symbol $\hat\pi_0^{-1}(1)$, we get that
	\[
	\la^{r_2}_{\hat\pi_0^{-1}(1)}=\la^{r_2-(d-1)}_{\hat\pi_0^{-1}(1)}-\sum_{j=2}^d\la^{r_2-(d-1)}_{\hat\pi_0^{-1}(j)}\quad\text{and}\quad\la^{r_2}_{\hat\pi_0^{-1}(j)}=\la^{r_2-(d-1)}_{\hat\pi_0^{-1}(j)}\quad \text{for}\ j=2,\ldots,d.
	\]
	
	Finally, since for every $j=2,\ldots,d$, we have $I_{\hat\pi_0^{-1}(j)}^2\subset I_{\hat\pi_0^{-1}(1)}^{r_n-(d-1)}$ and $T^{s_{\hat\pi_0^{-1}(1)}^2}I_{\hat\pi_0^{-1}(j)}^2=I_{\hat\pi_0^{-1}(j)}^{r_n-(d-1)}$, we get that the first return time for the interval $I_{\pi_0^{-1}(j)}^2$ to $I^2$ (and thus the height of the corresponding tower) equals $s_1^2+s_j^2$. Moreover the first return time of the interval $I_{\pi_0^{-1}(1)}^{2}$ to $I^2$ does not change in comparison to the first return time of the interval $I_{\hat\pi_0^{-1}(1)}^{r_2-(d-1)}$ to $I^{r_2-(d-1)}$ and thus is equal to $s_1^2$.

	\vspace{2mm}
	\noindent To summarize we obtain a decomposition of an interval into one short tower of width a little smaller then the half of length of the domain, $d-2$ very high but very thin towers, and one very high tower of width a little greater then the half of length of the domain (see Figure \ref{rys1}). The reader should have in mind that this is made so that the interval exchange transformation under consideration resembles a rotation with bounded partial quotients along subsequence. This construction leads to different heights of the first and the last tower (with all others negligible) is crucial to find a sequence which is a rigidity sequence along a family of subsets whose measure converges to 1. It is emphasised in the continuation of the proof.}

	\subsection{The description of the construction.}\label{sec:const} 
%	\textit{Towers over $I^{n}_{\hat\pi_0^{-1}(j)}$ for $j=2,\ldots,d-1$.} 
	Observe that in view of \eqref{ineq1}, \eqref{ineq2}, \eqref{eq:distest} and the fact that $\la_{\hat\pi_0^{-1}(1)}^{r_n}>0$ we obtain that
	\begin{equation}\label{ineq5}
	Q_n(\la^{r_n-(d-1)(n-1)})<2\la^{r_n}_{\hat\pi_0^{-1}(d)}<\frac{2}{n-1}\la^{r_n-(n-1)(d-1)}_{\hat\pi_0^{-1}(1)}.
	\end{equation}

	We now prove that the measure of the union of towers over intervals $I^{n}_{\hat\pi_0^{-1}(j)}$ for $j=2,\ldots,d-1$ tends to $0$ as $n\to\infty$. Indeed, by \eqref{ineq3} and \eqref{ineq5} we have
	\be\label{eq:IETmes}
	\begin{split}
		Leb&\big(\bigcup_{j=2,\ldots,d-1}\bigcup_{i=0}^{s_j^n+(n-1)s_1^n-1}T^iI^n_{\hat\pi_0^{-1}(j)}\big)
		=\sum_{j=2}^{d-1}\la_{\hat\pi_0^{-1}(j)}^{r_n}(s_j^n+(n-1)s_1^n)\\
		&<\sum_{j=2}^{d-1}\la_{\hat\pi_0^{-1}(j)}^{r_n}s_1^n(n-1+\rho(B))<\frac{1}{n+\rho(B)}Q_n(\la^{r_n-(d-1)(n-1)})s_1^{n}(n-1+\rho(B))\\
		&<\frac{2}{n-1}\la^{r_n-(n-1)(d-1)}_{\hat\pi_0^{-1}(1)}s_1^{n}<\frac{2}{n-1}\to 0\ \text{as}\ n\to\infty,
	\end{split}
	\ee
	where in the last inequality we used the fact that the interval $[0,\la^{r_n-(n-1)(d-1)}_{\hat\pi_0^{-1}(1)})$ is a base of Rokhlin tower of height $s_1^n$.
	
	\textit{Discontinuities of $T$.} Recall that by the definition of IET the discontinuities of $T$ are leftpoints of intervals $I_{a}$ for $a\in\mathcal A$ . Denote those points by $\partial I_a$. By Remark \ref{rozkladnawieze} we obtain that for each $a\in\mathcal A$, the point $\partial I_a$ is the leftpoint of some level of tower $\{T^{i}I^n_{\hat\pi_0^{-1}(j)};\ 0\le i<s_j^{n}+(n-1)s_1^n\}$ for some $j=2,\ldots,d$. However, note that all towers $\{T^{i}I^n_{\hat\pi_0^{-1}(j)};\ 0\le i<(n-1)s_1^n\}$ are contained in the tower over $[0,\la^{r_n-(n-1)(d-1)}_{\hat\pi_0^{-1}(1)})$ of height $s_1^n$. Hence those towers \underline{cannot} contain discontinuities of $T$. In fact, for all $s_1^n\le i<(n-2)s_1^n$ the sets
	
	\begin{equation}\label{glue0}
	\begin{split} &\text{$T^i[\la^{r_n}_{\hat\pi_0^{-1}(1)},Q_n(\la^{r_n-(d-1)(n-1)}))$, $T^{i-s_1^n}[\la^{r_n}_{\hat\pi_0^{-1}(1)},Q_n(\la^{r_n-(d-1)(n-1)}))$},\\ &\quad\text{and $T^{i+s_1^n}[\la^{r_n}_{\hat\pi_0^{-1}(1)},Q_n(\la^{r_n-(d-1)(n-1)}))$}\\&
	\text{are intervals included in $T^{i\ mod\ s_1^n}[0,\la^{r_n-(n-1)(d-1)}_{\hat\pi_0^{-1}(1)})$.}
	\end{split}
	\end{equation} Moreover 
	\begin{equation}\label{glue}
	\begin{split}
	&\text{the leftpoint of $T^i[\la^{r_n}_{\hat\pi_0^{-1}(1)},Q_n(\la^{r_n-(d-1)(n-1)}))$}\\&\quad  \text{is the rightpoint of $T^{i-s_1^n}[\la^{r_n}_{\hat\pi_0^{-1}(1)},Q_n(\la^{r_n-(d-1)(n-1)}))$,}
	\\\ &\text{while the rightpoint of $T^i[\la^{r_n}_{\hat\pi_0^{-1}(1)},Q_n(\la^{r_n-(d-1)(n-1)}))$}\\&\quad \text{is the leftpoint of  $T^{i+s_1^n}[\la^{r_n}_{\hat\pi_0^{-1}(1)},Q_n(\la^{r_n-(d-1)(n-1)}))$.}
	\end{split}
	\end{equation}
	In particular all discontinuities of $T$ are contained in towers of form $\{T^{i}I^n_{\hat\pi_0^{-1}(j)};\ (n-1)s_1^n\le i<s_j^{n}+(n-1)s_1^n\}$ for some $j=2,\ldots,d$.
	
	\textit{Tower $W_n$.} For any $n\in\N$ define a tower $W_n:=\bigcup_{i=0}^{q_n-1} J^n$, where
	\begin{itemize}
		\item $q_n=s_d^n+ns_1^n$;\\
		\item $J^n:=I_{\hat\pi_0^{-1}(1)}^n\cap T^{-q_n}I_{\hat\pi_0^{-1}(1)}^n\cap\ldots\cap T^{-Kq_n}I_{\hat\pi_0^{-1}(1)}^n.$
	\end{itemize}
	Denote
	\[
	\Delta_n:=\la^{r_n}_{\hat\pi_0^{-1}(d)}-\la^{r_n}_{\hat\pi_0^{-1}(1)}>0,
	\]
	where the inequality follows from \eqref{ineq4}.
	Note that $T^{s_1^n}(I_{\hat\pi_0^{-1}(1)}^n)$ is an interval, which is the rightmost part of the interval $I_{\hat\pi_0^{-1}(d)}^n$ and thus 
	\begin{equation}\label{gdzieI_1}
	T^{q_n}(I_{\hat\pi_0^{-1}(1)}^n)=T^{s_d^n+(n-1)s_1^n}(T^{s_1^n}I_{\hat\pi_0^{-1}(1)}^n)=\big[\Delta_n,\la^{r_n}_{\hat\pi_0^{-1}(1)}+\Delta_n\big). 
	\end{equation}
	In particular $I_{\hat\pi_0^{-1}(1)}^n\cap T^{q_n}I_{\hat\pi_0^{-1}(1)}^n$ is an interval of length $\la^{r_n}_{\hat\pi_0^{-1}(1)}-\Delta_n$. Since it is an subinterval of $T^{s_d^n+(n-1)s_1^n}I^{r_n}_{\hat\pi_0^{-1}(d)}$ on which $T^{-q_n}$ acts by translation by $-\Delta_n$, we get that 
	$I_{\hat\pi_0^{-1}(1)}^n\cap T^{-q_n}I_{\hat\pi_0^{-1}(1)}^n=\big[0,\la^{r_n}_{\hat\pi_0^{-1}(1)}-\Delta_n\big)$  Note also that in view of \eqref{ineq4} {(see also \eqref{eq:tilte})}, \eqref{ineq1} {and the defintion of $\ep$,} we get 
	\begin{equation}\label{3/5}
	K\Delta_n<K\ep Q_n(\la^{r_n-(d-1)(n-1)})<\frac{3K}{100K}\la^{r_n}_{\hat\pi_0^{-1}(1)}=\frac{3}{100}\la^{r_n}_{\hat\pi_0^{-1}(1)}.
	\end{equation}
	Thus, by induction, we get that
	\[
	J^n=\big[0,\la^{r_n}_{\hat\pi_0^{-1}(1)}-K\Delta_n\big)
	\]
	is a non-empty interval.
	
	Moreover, since by \eqref{ineq1} and the choice of $\ep$ we get that $\la^{r_n}_{\hat\pi_0^{-1}(1)}>\frac{1}{3}|I^n|$, in view of \eqref{3/5}, we get 
	\begin{equation}\label{Jdown}
	Leb(J^n)=\la^{r_n}_{\hat\pi_0^{-1}(1)}-K\Delta_n>
	\frac{97}{100}\la^{r_n}_{\hat\pi_0^{-1}(1)}>\frac{3}{10}|I^n|=\frac{3}{10}\sum_{j=1}^d\la^{r_n}_{\hat\pi_0^{-1}(j)},
	\end{equation}
	which implies that
	\[
	\begin{split}
	Leb(W_n)&=q_n\cdot Leb(J^n)>\frac{3}{10}(s_d^n+ns_1^n)\sum_{j=1}^d\la^{r_n}_{\hat\pi_0^{-1}(j)}\\
	&>\frac{3}{10\rho(B)}\big( 
	s_1^n\la^{r_n}_{\hat\pi_0^{-1}(1)}+\sum_{j=2}^d(s_j^n+(n-1)s_1^n)\la^{r_n}_{\hat\pi_0^{-1}(j)} \big)=\frac{3}{10\rho(B)},
	\end{split}
	\]
	where in the last equation we used the fact that the union of all considered towers over intervals $I^n_{\hat\pi_0^{-1}(j)}$ is the unit interval. In particular we obtained that
	\begin{equation}
	\liminf_{n\to\infty}Leb(W_n)> 0.
	\end{equation}
	
	We will also define here a number which will play crucial role in the conclusion of the proof. Note that also by \eqref{ineq4} {and \eqref{eq:distest},} we have
	\begin{equation}\label{3/10}
	\begin{split}
	q_n\Delta_n
	&>\frac{\ep}{2}q_n Q_n(\la^{r_n-(d-1)(n-1)})\\
	&>\frac{\ep}{2\rho(B)}\big(s_1^n\la_{\hat\pi_0^{-1}(1)}^{r_n}+\sum_{i=2}^d(s_i^n+(n-1)s_1^n)\la_{\hat\pi_0^{-1}(i)}^{r_n}\big)=\frac{\ep}{2\rho(B)}>0.
		\end{split}
		\end{equation}
	In particular by passing to a subsequence if necessary and also by using the fact $\lim_{n\to\infty}\frac{(n-1)s_1^n}{q_n}=1$ we obtain the there exists $\g>0$ such that
	\begin{equation}\label{qnlan}
	\lim_{n\to\infty}q_n\Delta_n=
	\lim_{n\to\infty}(n-1)s_1^n\Delta_n=\g
	\end{equation}
	
	\textit{Set of non-controlled points}. For each $n\in\N$ we now describe the set $X_n$ of points for which we do not control the values of cocycle $f$. It consists of three parts. First are the towers $\{T^{i}I^{n}_{\hat\pi_0^{-1}(j)};\ 0\le i<s_j^n+(n-1)s_1^n \}$ for $2\le j\le d$. We have already proved though {(see eg. \eqref{eq:IETmes})}, that the measure of union of those towers converges to $0$ as $n\to\infty$. The second part is the tower 
	\[
	\{T^{i}[|I^n|-K\Delta_n,|I^n|);\ (n-3)s_1^n\le i<s_d^n+(n-1)s_1^n\}.
	\]	
	However, in view of \eqref{ineq4} and \eqref{ineq2}, the measure of this set is  equal to 
	\[
	\begin{split}
	K&\Delta_n(2s_1^n+s_d^n)<(2+\rho(B))s_1^nK\ep|I^{n}|\\
	&\le2(2+\rho(B))\frac{s_d^{n}+(n-1)s_1^{n}}{n-1}K\ep|I^{n}_{\hat\pi_0^{-1}(d)}|\le\frac{2(2+\rho(B))}{n-1}K\ep\to 0\text{ as }n\to\infty.
	\end{split}
	\]
	The final part of $X_n$ is the set $\bigcup_{i=0}^{Kq_n-1}T^{-i}(\bigcup_{j=2}^{d-1}I_{\hat\pi_0^{-1}(j)}^n)$. By \eqref{ineq3} its measure however is bounded by
	\[
	\begin{split}
	Kq_n\sum_{j=2}^{d-1}\la^{r_n}_{\hat\pi_0^{-1}(j)}&<K(n+\rho(B))s_1^{r_n}\frac{|I^n|}{n+\rho(B)}\\&<4K\la^{r_n}_{\hat\pi_0^{-1}(d)}\frac{s_d^n+(n-1)s_1^n}{n-1}<\frac{4K}{n-1}\to 0\text{ as }n\to\infty.
	\end{split}
	\]
	To sum up we obtained that sets $X_n$ consisting of three parts listed above satisfy
	\begin{equation}\label{miaraXn}
	\lim_{n\to\infty}Leb(X_n)=0.
	\end{equation}
	
	\begin{figure}[h]
		\includegraphics[scale=0.7]{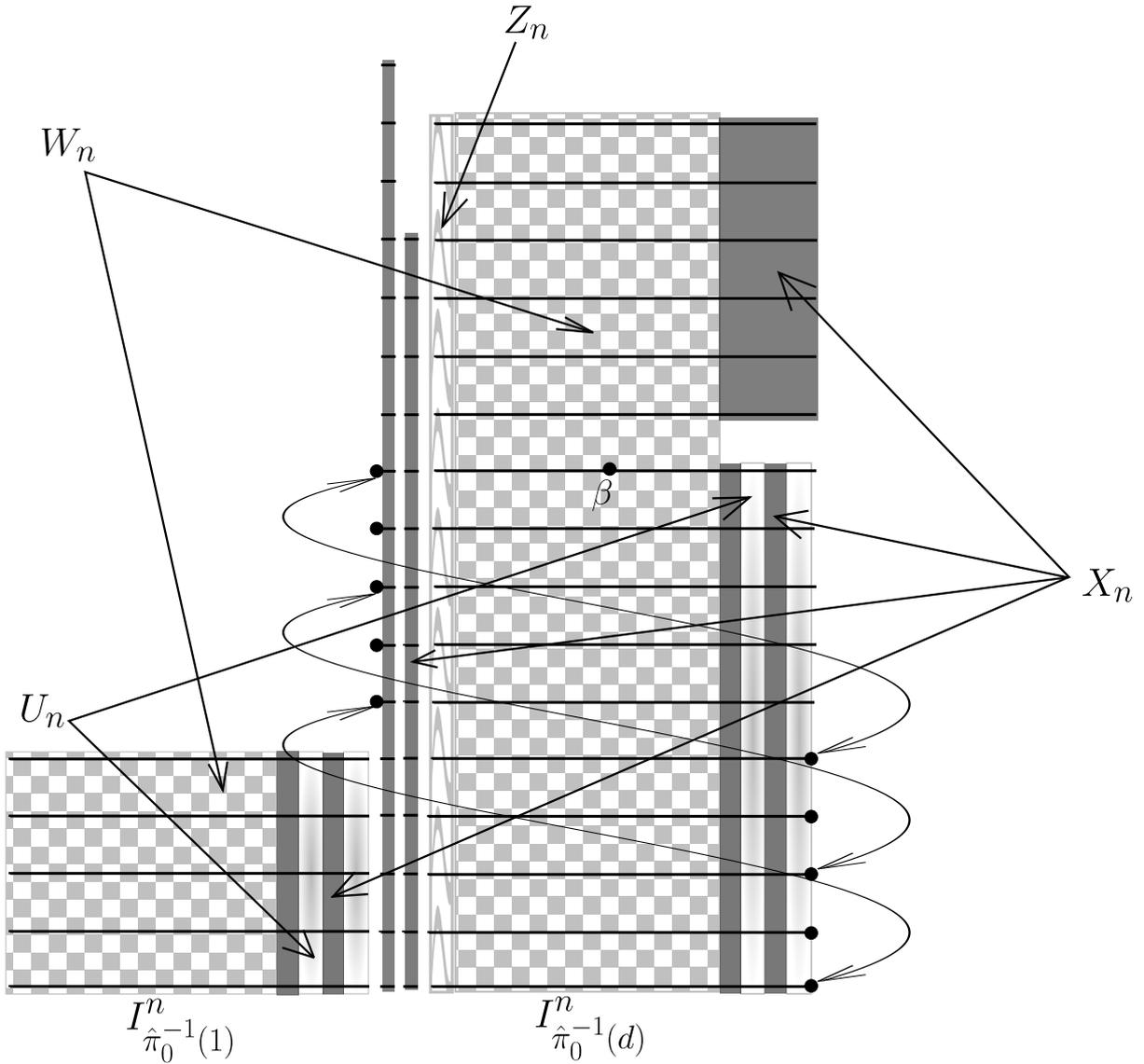}
		$\quad\quad\quad\quad$
		\caption{Sets $W_n$, $Z_n$, $U_n$ and $X_n$ obtained in the construction. Curved arrows show which points are identified in view of \eqref{glue}.\label{rys2}}
	\end{figure}
	
	\medskip
	\textit{Set of controlled points.} We now give precise description of points which are included in the set $I\setminus X_n$. Namely we show that $I\setminus X_n$ is a union of three disjoint sets $W_n,Z_n$ and $U_n$, where $W_n$ is defined as before. 
	
	For every $n\in\N$ denote $\Sigma_n:=\sum_{j\le d-1}\la^{r_n}_{\hat\pi_0^{-1}(j)}$. Moreover, for each $n\in\N$ let 
	\[
	Z_n:=\bigcup_{i=0}^{s_d^n+(n-1)s_1^n-1}T^i\big[\Sigma_n,\Sigma_n+\Delta_n \big)
	\]
	and
	\[
	U_n:=\bigcup_{i=0}^{(n-2)s_1^n-1}T^i\big [|I^{n}_{\hat\pi_0^{-1}(1)}|-K\Delta_n,|I^{n}_{\hat\pi_0^{-1}(1)}|\big)\setminus X_n,
	\]
	see Figure \ref{rys2}.
	The set $Z_n$ is the leftmost part of Rokhlin tower over $I^n_{\hat\pi_0^{-1}(d)}$ of height $s_d^n+(n-1)s_1^n$. The set $U_n$ on the other hand is the rightmost part of the tower $\{T^iI^n_{\hat\pi_0^{-1}(1)};\ 0\le i <q_n\}$, excluding a portion of top levels and preimages of intervals $I^n_{\hat\pi_0^{-1}(j)}$ for $j=2,\ldots,d-1$.
	More precisely, by \eqref{miaraXn} and \eqref{qnlan} we have that there exists a sequence $\{\de_n\}_{n\in\N}$ of positive numbers such that
	\begin{equation}\label{pdec}
		\lim_{n\to\infty}\frac{\de_n}{\Delta_n}=0\ \text{and}\ T^{-m}(x)\in\big[|J^n|+(p-1)\Delta_n+\de_n,|J^n|+p\Delta_n \big).
	\end{equation}
	Each of the set $W_n$, $Z_n$ and $U_n$ is a nontrivial part of $I$. We will now show that these are indeed sets on which we control the asymptotic behaviour of $T$.
	
	\medskip	
	\subsection{The rigidity time.}\label{sec:rig} We now show that for any $i=1,\ldots, K$, the sequence  $\{iq_n\}_{n\in\N}$ is a rigidity time along the sequence $\{I\setminus X_n\}_{n\in\N}$. It is worth to mention that since $Leb(X_n)\to 0$, we actually obtain that each of those sequence is a regular rigidity time for $T$. Hence it should be enough that  $\{q_n\}_{n\in\N}$ is a rigidity sequence. For clarity and to point out certain phenomena we give the proof for every $i=1,\ldots,K$. To do this we want to use Lemma \ref{metr}. By \eqref{gdzieI_1} we have that 
	\begin{equation}\label{transWn}
	T^{iq_n}x=x+i\Delta_n\ \text{for}\ x\in W_n.
	\end{equation}
	Hence
	\begin{equation}
	\lim_{n\to\infty}|T^{iq_n}x-x|=0\ \text{for}\ x\in W_n.
	\end{equation}
	We will now show that the above convergence is also true for $x\in Z_n\cup U_n$. 
	Let $x\in Z_n$. Then for some $l=0,\ldots,s_d^n+(n-1)s_1^n$ we have
	\[
	x\in T^l\big[\Sigma_n,\Sigma_n+\Delta_n \big)\subset T^lI^n_{\hat\pi_0^{-1}(d)}.
	\]
	Recall that 
	\begin{equation}\label{gdzieId}
	T^{s_d^n+(n-1)s_1^n}I^n_{\hat\pi_0^{-1}(d)}=[0,|I^n_{\hat\pi_0^{-1}(d)}|).
	\end{equation}
	Moreover in view of \eqref{ineq4} and \eqref{ineq1} we have
	\[
	2\Delta_n<2\ep|I^n|<\frac{1}{50}|I^n|<|I^n_{\hat\pi_0^{-1}(1)}|.
	\]
	Thus
	\[
	T^{s_d^n+(n-1)s_1^n}\big[\Sigma_n,\Sigma_n+\Delta_n\big)=[0,\Delta_n)\subset I^n_{\hat\pi_0^{-1}(1)}.
	\]
	Since $T^{s_1^n}I^n_{\hat\pi_0^{-1}(1)}\subset I^n_d$, we get
	\begin{equation}
	T^{q_n}x=x+\Delta_n\subset T^l I^n_{\hat\pi_0^{-1}(d)}.
	\end{equation}
	More precisely, 
	\begin{equation}
	\begin{split}\label{ZnwWn}
	T^{q_n}(x)&\in T^{l}\big[\Sigma_n+\Delta_n,\Sigma_n+2\Delta_n\big)\\&\subset T^{s_1^n+l}J^n\subset W_n.
	\end{split}
	\end{equation}
	Hence by \eqref{transWn} we obtain that
	\begin{equation}\label{transZn}
	T^{iq_n}x=T^{(i-1)q_n}(T^{q_n}x)=x+i\Delta_n\text{ for }x\in Z_n.
	\end{equation}
	Hence for every $i=1,\ldots,K$ it holds that
	\begin{equation}
	\lim_{n\to\infty}\sup_{x\in Z_n}|T^{iq_n}x-x|=0
	\end{equation}
	
	Assume now that $x\in U_n$ and for $j=1,\ldots,K$ define 
	\[
	\begin{split}
	U_{n,j}:=\bigcup_{i=0}^{(n-2)s_1^n-1}T^i\big[|I^{n}_{\hat\pi_0^{-1}(1)}|-&(K-j+1)\Delta_n,\\
	&\quad|I^{n}_{\hat\pi_0^{-1}(1)}|-(K-j)\Delta_n\big)\setminus X_n.
	\end{split}
	\]
	Then for some $1\le l\le K$ we have $x\in U_{n,l}$. That is there exists $0\le m<(n-2)s_1^n$ such that 
	\[
	\begin{split}
	x&\in T^m\big[|I^{n}_{\hat\pi_0^{-1}(1)}|-(K-l+1)\Delta_n,|I^{n}_{\hat\pi_0^{-1}(1)}|-(K-l)\Delta_n\big)\\
	&=T^m\big[|J^n|+(l-1)\Delta_n,|J^n|+l\Delta_n\big).
	\end{split}
	\]
	
	Now assume that $i=1,\ldots,K-l$. By \eqref{gdzieI_1} and the fact that $T$ acts on each level of towers $\{T^MI^n_{\hat\pi_0^{-1}(1)};\ 0\le M<s_1^n \}$ and $\{T^MI^n_{\hat\pi_0^{-1}(d)};\ 0\le M<q_n-s_1^n \}$ via translation, we obtain
	\[
	T^{q_n-m}x=T^{q_n}(T^{-m}x)=T^{-m}x+\Delta_n\in \big[|J^n|+l\Delta_n,|J^n|+(l+1)\Delta_n\big)\subset I^n_{\hat\pi_0^{-1}(1)},
	\]
	and thus
	\[
	T^{q_n}x=x+\Delta_n\in T^m \big[|J^n|+l\Delta_n,|J^n|+(l+1)\Delta_n\big)\subset T^mI^n_{\hat\pi_0^{-1}(1)}. 
	\]
	Analogously, for every $i=1,\ldots,K-l$ we obtain	
	\begin{equation}\label{transUn1}
	T^{iq_n}x=x+i\Delta_n.
	\end{equation}
	and
	\begin{equation}\label{transUn2}
	\begin{split}
	T^{iq_n}x\in T^m \big[&|J^n|+(l+i-1)\Delta_n,\\
	&|J^n|+(l+i)\Delta_n\big)\subset T^mI^n_{\hat\pi_0^{-1}(1)} 
	\end{split}
	\end{equation}
	Thus for $i=1,\ldots,K-l$ we obtained that 
	\[
	\lim_{n\to\infty}\sup_{x\in U_{n,l}}|T^{iq_n}x-x|=0.
	\]
	Consider now $i=K-l+1,\ldots,K$. We have already proved that
	\[
	T^{(K-l)q_n}x\in T^m \big[|J^n|+(K-1)\Delta_n,|J^n|+K\Delta_n\big)\setminus X_n\subset U_{n,K}.
	\]
	In view of \eqref{gdzieId} and the fact that $x\notin X_n$ we get that
	\[
	T^{(K-l+1)q_n-m}x\in \big[\Sigma_n,\Sigma_n+\Delta_n \big)
	\] 
	and thus
	\begin{equation}\label{przeskok1}
	T^{(K-l+1)q_n}x\in T^m\big[\Sigma_n,\Sigma_n+\Delta_n \big).
	\end{equation}
	If $m<s_1^n$ then $T^{(K-l)q_n}x,T^{(K-l+1)q_n}x\in T^m{I^n}$ and in view of \eqref{gdzieI_1} and \eqref{glue} we have
	\begin{equation}\label{przeskok}
	\begin{split}
	T^{(K-l+1)q_n}x&=T^{m}(T^{(K-l+1)q_n-m}x)\\
	&=T^m(T^{(K-l)q_n-m}x+\Delta_n) =T^{(K-l)q_n}x+\Delta_n.
	\end{split}
	\end{equation}
	For $m\ge s_1^n$, again by using \eqref{glue} we get for points $T^{(K-l)q_n}x\in T^{m-s_1^n}I^n_{\hat\pi_0^{-1}(d)}$ and $T^{(K-l+1)q_n}x\in T^{m}I^n_{\hat\pi_0^{-1}(d)}$ and moreover $T^{(K-l+1)q_n}x=T^{(K-l)q_n}x+\Delta_n$, that is property \eqref{przeskok} holds for every $x\in U_{n,l}$. Since in view of \eqref{przeskok1} we have that
	\begin{equation}\label{przeskok2}
	T^{(K-l+1)q_n}x\in Z_n,
	\end{equation}
	 by \eqref{transZn} we obtain that for any $i=K-l,\ldots,K$ the property \eqref{transUn1} holds. To sum up we get that for any $i=1,\ldots,K$ we have
	\begin{equation}\label{transUn}
	T^{iq_n}x=x+i\Delta_n\quad\text{for every }x\in U_n
	\end{equation}
	and thus
	\[
	\lim_{n\to\infty}\sup_{x\in U_n}|T^{iq_n}x-x|=0.
	\]
	This concludes the proof that $iq_n$ is a rigidity time along the sequence $\{W_n\cup Z_n\cup U_n\}_{n\in\N}$ for every $i=1,\ldots,K$.
	\subsection{Piecewise constant roof functions.} All objects defined in sections \ref{sec:rec}-\ref{sec:rig} are used in proving both parts of Theorem \ref{Main}. From this point we evaluate the values of the cocycle for piecewise constant and piecewise linear with non-zero slope roof functions separately. On this basis we calculate the limit measure which differentiates between the powers of special flows under consideration. 
	\subsubsection{\bf Roof function.} Let $D_{\hat\pi,\la}\subset[0,1)$ be such that for every $\beta\in D_{\hat\pi,\la}$ we have that
	\[
	\beta\in\bigcup_{i=0}^{q_n-1}T^i\big[\frac{1}{2}|J^n|,|J^n|)\big)\ \text{ for infinitely many }n\in\N.
	\]
	Then for those $n\in\N$ and $0\le i<K$ we have
	\[
	\bigcup_{j=iq_n}^{(i+1)q_n-1}T^{-j}[\beta-\Delta_n,\beta)\subset W_n.
	\]
	Indeed, in view of \eqref{3/5} and \eqref{Jdown} we get that 
	\begin{equation}\label{1/10}
		(K+1)\Delta_n\le 2K\Delta_n<\frac{3}{50}\la^{r_n}_{\hat\pi_0^{-1}(1)}<\frac{1}{10}|J^n|,
	\end{equation}
	from which the above inclusion follows.
	
	From now on we assume that $f$ is a piecewise constant function, which has discontinuity point in $\beta$ and all other discontinuities are included in the set of discontinuity points of $T$. We denote the jump of $f$ in $\beta$ by $D_\beta$. 
	
	\textit{The values of the cocycle.} We now prove that there exists a real sequence $\{a_n\}_{n\in\N}$ such that for every $i=1,\ldots,K$ we have that
	\begin{equation}\label{jestemogr}
	\text{the sequences }\big\{\int_{W_n\cup Z_n\cup U_n}|S_{iq_n}(f)(x)-ia_n|^2\,dx\big\}_{n\in\N}\ \text{ are bounded.}
	\end{equation}
	Let $a_n:=S_{q_n}(f)(0)$. For every $x\in W_n\cup Z_n\cup U_n$ we evaluate the difference $S_{iq_n}(f)(x)-ia_n$ for $i=1,\ldots,K$. Since these values will be uniformly bounded, \eqref{jestemogr} will be straightforward. 
	
	\textit{Points from $W_n$.} Let $i=1,\ldots,K$. Assume first that $x\in W_n$. Then for some $l=0,\ldots,q_n-1$ we have $x\in T^lJ^n$. Then one of the following possibilities holds:
\begin{enumerate}
		\item[\emph{(i)}] the orbit $\{T^kx\}_{k=0,\ldots,iq_n-1}$ intersects the interval $T^lI_{\hat\pi_0^{-1}(1)}$ only to the left of $\beta$. Then \begin{equation}\label{limitWn-2}
			S_{iq_n}(f)(x)=iS_{q_n}(f)(0).
			\end{equation}
		\item[\emph{(ii)}] the orbit $\{T^kx\}_{k=0,\ldots,iq_n-1}$ intersects the interval $T^lI_{\hat\pi_0^{-1}(1)}$ only to the right of $\beta$. Then 
		\begin{equation}\label{limitWn-1}
		S_{iq_n}(f)(x)=iS_{q_n}(f)(0)+iD_\beta.
		\end{equation}
		\item[\emph{(iii)}] $i\ge 2$ and for some $k=1,\ldots,i-1$ we have 
		\[
		x\in \bigcup_{j=(i-k-1)q_n}^{(i-k)q_n-1}T^{-j}[\beta-\Delta_n,\beta)\subset W_n.
		\]
	\noindent Then 
	\begin{equation}\label{limitWn}
		S_{iq_n}(f)(x)=iS_{q_n}(f)(0)+kD_\beta.
		\end{equation}
	\end{enumerate}
	The points $(i)$ and $(ii)$ follow from the fact that $f$ is constant on all level intervals of the tower $\{T^M I_{\hat\pi_0^{-1}(1)}^n;\ 0\le M<q_n\}$, except the level to which $\beta$ belongs. Since the orbit $\{T^j(0)\}_{j=0}^{q_n-1}$ is precisely the set of left endpoints of aforementioned tower and by the definition of $W_n$ each orbit segment of length $q_n$ intersects each level of this tower exactly once, the points $(i)$ and $(ii)$ follow.
	
	Assume now that $i\ge 2$ and for some $k=1,\ldots,i-1$ we have 
	\begin{equation}\label{inne}
	x\in \bigcup_{j=(i-k-1)q_n}^{(i-k)q_n-1}T^{-j}[\beta-\Delta_n,\beta).
	\end{equation}
	Then for $m=0,\ldots,i-k-1$ the orbit segment $\{T^jx\}_{j=mq_n}^{(m+1)q_n}$ intersects the interval $T^l I_{\hat\pi_0^{-1}(1)}$ to the left of $\beta$. Hence  
	\begin{equation}\label{wn1}
	S_{q_n}(f)(T^{mq_n}x)=S_{q_n}(f)(0)\ \text{ for }\ m=0,\ldots,i-k-1.
	\end{equation}
	On the other hand, for $m=i-k,\ldots,i-1$ the orbit segment $\{T^jx\}_{j=mq_n}^{(m+1)q_n}$ intersects the interval $T^l I_{\hat\pi_0^{-1}(1)}$ to the right of $\beta$. Hence
	\begin{equation}\label{wn2}
	S_{q_n}(f)(T^{mq_n}x)=S_{q_n}(f)(0)+D_\beta\ \text{ for }\ m=i-k,\ldots,i-1.
	\end{equation}
	To summarize, \eqref{wn1} and \eqref{wn2} together yield
	\begin{equation}
	S_{iq_n}(f)(x)=\sum_{m=0}^{i-1}S_{q_n}(f)(T^{mq_n}x)=iS_{q_n}(f)(0)+kD_\beta\ \text{for}\ x\ \text{satisfying \eqref{inne}}.
	\end{equation}

	\textit{Points from $Z_n$.}
	Assume now that $x\in Z_n$, that is 
	for some $m=0,\ldots,s_d^n+(n-1)s_1^n-1$ we have
	\[
	x\in T^m\big[\Sigma_n,\Sigma_n+\Delta_n\big)\subset T^mI^n_{\hat\pi_0^{-1}(d)}.
	\]
	Then by \eqref{transZn} for $0\le j<q_n$ and $b=0,\ldots,i-1$ we have
	\begin{equation}\label{Znupper}
	T^{bq_n}(T^jx)=T^jx+b\Delta_n.
	\end{equation}
	Moreover note that since $\beta\in T^lJ^n$ we have that if $l<s_1^n$ then $\beta\in T^lI_{\hat\pi_0^{-1}(1)}^n$ and if $l\ge s_1^n$ then $\beta\in T^{l-s_1^n}I_{\hat\pi_0^{-1}(d)}^n$ . Additionally
	\begin{equation}\label{Zndist}
	T^jx\in T^{m+j}I^n_{\hat\pi_0^{-1}(d)}\ \text{ for }\ 0\le j< q_n-m-1 .
	\end{equation}
	Then by \eqref{Znupper}, \eqref{1/10} and the definition of $Z_n$ the orbit $\{T^jx\}_{j=0}^{iq_n-1}$ is contained in tower 
	\begin{equation}\label{Zntower}
	\big\{\bigcup_{j=0}^{q_n-1}T^j\big[\Sigma_n,\Sigma_n+\frac{1}{10}|J^n| \big\}
	\end{equation}
	Since $\beta\in T^l[\frac{1}{2}|J^n|,|J^n|)$, the form of above tower implies that we cannot have point $\beta$ to the left of orbit $\{T^jx\}_{j=0}^{iq_n-1}$. This implies that 
	\begin{equation}\label{limitZn}
	S_{iq_n}(f)(x)=S_{iq_n}(f)(0)\ \text{for}\ x\in Z_n\ \text{and}\ i=1,\ldots,K.
	\end{equation}
	
	\textit{Points from $U_n$.} Suppose now that $x\in U_n$ that is $x\in U_{n,p}$ for some $1\le p\le K$. Then there exists $0\le b<(n-2)s_1^n$ such that 
	\[
	x\in T^b \big[|J^n|+(p-1)\Delta_n,|J^n|+p\Delta_n \big)\subset T^bI_{\hat\pi_0^{-1}(1)}^n.
	\]
	If $p<K$ then for $m=0,\ldots,K-p-1$ we have that the orbit segments $\{T^jx\}_{j=mq_n}^{(m+1)q_n}$ intersect each level of tower $\{T^MI^n_{\hat\pi_0^{-1}(1)};\ 0\le M<s_1^n \}$ and $\{T^MI^n_{\hat\pi_0^{-1}(d)};\ 0\le M<q_n-s_1^n \}$ exactly once. 
	Moreover whenever they intersect the interval $T^l I_{\hat\pi_0^{-1}(1)}$ they do it to the right of $\beta$. Hence for $m=0,\ldots,K-p-1$ we have that 
	\[
	S_{q_n}(f)(T^{mq_n}x)=S_{q_n}(f)(0)+D_\beta
	\]
	and thus if $1\le p\le K-i$ then
	\begin{equation}\label{wn01}
	S_{iq_n}(f)(x)=if^{(q_n)}(x)+iD_\beta\ \text{ for }\ x\in U_{n,p}.
	\end{equation}
	
	By the definition of $U_{n,p}$ we have that 
	\begin{equation}\label{slip}
	T^{(K-p)q_n+q_n-m}x\in[\Sigma_n,\Sigma_n+\Delta_n \big)
	\end{equation}
	Recall that $T$ acts on each level of towers $\{T^MI^n_{\hat\pi_0^{-1}(c)};\ 0\le M<s_c^n \}$ by translation. Then in view of \eqref{glue0} and \eqref{glue}, for every $j=0,\ldots,q_n-1$ the set $T^j[T^{(K-p-1)q_n}x,T^{(K-p)q_n}x]$ is an \underline{interval} of length $\Delta_n$ which does not contain $\beta$. Thus
	\begin{equation}\label{wn02}
	S_{q_n}(f)(T^{(K-p)q_n}x)=S_{q_n}(f)(T^{(K-p-1)q_n}x)=S_{q_n}(f)(0)+D_\beta
	\end{equation}
	Finally note that $T^{(K-p+1)q_n}x\in Z_n$. Thus if $p>1$ then in view of \eqref{limitZn} we obtain for every $m=K-p+1,\ldots,K-1$ the following
	\begin{equation}\label{wn03}
	S_{q_n}(f)(T^{mq_n}x)=S_{q_n}(f)(0)\ \text{for every}\ x\in U_{n,p}.
	\end{equation}
	By combining \eqref{wn01}, \eqref{wn02} and \eqref{wn03} if $K-i+1\le p\le K$ ($K-p+1\le i \le K$) then
	\begin{equation}\label{wn04}
	S_{iq_n}(f)(x)=iS_{q_n}(f)(0)+(K-p+1)D_\beta\ \text{ for every }\ x\in U_{n,p}.
	\end{equation}
	
	\subsubsection{\bf{The evaluation of $S_{iq_n}(f)$.}} We now summarize the results of three previous parts of the proof and obtain the values of $S_{iq_n}(f)$ on $W_n\cup Z_n\cup U_n$ for every $i=1,\ldots,K$. By \eqref{limitWn-2}, \eqref{limitWn-1}, \eqref{limitWn}, \eqref{limitZn}, \eqref{wn01} and \eqref{wn04} for every $x\in W_n\cup Z_n\cup U_n$ we have
	\[
	S_{iq_n}(f)(x)-iS_{q_n}(f)(0)=jD_\beta\quad\text{for some}\ j=0,\ldots,i.
	\]
	More precisely, if $0<j<i$ then for $x\in W_n\cup Z_n\cup U_n$ we have
	\[
	S_{iq_n}(f)(x)-iS_{q_n}(f)(0)=jD_\beta\Leftrightarrow x\in\bigcup_{l=(j-k+1)q_n}^{(j-k)q_n-1}T^{-l}[\beta-\Delta_n,\beta)\subset W_n\vee x\in U_{n,k-j+1}.
	\]
	Thus in view of \eqref{qnlan}, \eqref{pdec} and the fact that $U_n$ and $W_n$ are disjoint we get
	\begin{equation}\label{jval}
		\begin{split}
	Leb\{x\in W_n\cup Z_n\cup U_n;&\, S_{iq_n}(f)(x)-iS_{q_n}(f)(0)=jD_\beta\}\\&=q_n\Delta_n+(n-1)s_1^n(\Delta_n-\de_n)\to2\g.
	\end{split}
	\end{equation}
	Moreover by \eqref{limitZn} and \eqref{qnlan} we get that 
	\begin{equation}\label{0val}
		\begin{split}
	Leb\{x\in W_n\cup Z_n\cup U_n;&\, S_{iq_n}(f)(x)-iS_{q_n}(f)(0)=0\}\\&\ge Leb(Z_n)=(q_n-s_1^n)\Delta_n\to\g.
	\end{split}
		\end{equation}
	On the other hand in view of \eqref{wn04} we have that
		\begin{equation}\label{ival}
			\begin{split}
				Leb\{x\in W_n\cup Z_n\cup U_n;&\, S_{iq_n}(f)(x)-iS_{q_n}(f)(0)=iD_{\beta}\}\\&\ge Leb(U_{n,K-i+1})=(n-1)s_1^n(\Delta_n-\de_n)\to\g.
			\end{split}
		\end{equation}	
		In view of \eqref{miaraXn} we also have
			\begin{equation}\label{wnunzn}
				\lim_{n\to\infty} Leb(W_n\cup Z_n\cup U_n)=\lim_{n\to\infty}Leb(I\setminus X_n)=1. 
			\end{equation}
		Thus combining \eqref{jval}, \eqref{0val}, \eqref{ival} and \eqref{wnunzn} we get that for every $i=1,\ldots,K$ there exists $\al,\beta\ge\g$ such that the following convergence holds
		
			\begin{equation}\label{maincocycle}\begin{split}
			\lim_{n\to\infty}&(S_{iq_n}(f)-iS_{q_n}(f)(0))_*Leb=P_i\quad \text{where}\\ &P_i:=\al\de_{0}+\beta\de_{iD_\beta}+
			2\g\sum_{0<j<i}\de_{jD_\beta}.
			\end{split}
			\end{equation}
	Since $L<K$, in view of Theorem \ref{main}, for $a_n=S_{q_n}(f)(0)$  we have the following convergences
	\[
	\lim_{n\to\infty}T^f_{Ka_n}\to\int_{\R}T_{-t}^f\,dP_K(t)
	\]
	and
	\[
	\lim_{n\to\infty}T^f_{La_n}\to\int_{\R}T_{-t}^f\,dP_L(t).
	\]
	Since $P_K$ and $P_L$ have different number of atoms and are supported on compact sets (hence with exponential decay) then so do rescalings of these measures. Thus Corollary \ref{cor:maintool} implies that the flows $\{S^{T,f}_{Kt}\}_{t\in\R}$ and $\{S^{T,f}_{Lt}\}_{t\in\R}$ are spectrally disjoint that is part \emph{(i)} of Theorem \ref{Main} is proved.

\begin{remark}
As a consequence of the first part of Theorem \ref{Main} together with the results included in \cite{DaRy} and Proposition 7.2 from \cite{BFr1}, one can show that in every connected component of the Moduli space of translation structures, the set of those translation structures for which the associated vertical flow is disjoint with all its natural rescalings, form a topologically generic set.
	\end{remark}
	
\begin{remark}
One can replace in Theorem \ref{Main} special flows with integral automorphisms and obtain analogous result for them. The proof of such result goes along the same lines as the proof of first part of Theorem \ref{Main}. As a consequence, one can show that if $\pi$ is a \emph{degenerate} permutation of alphabet $\mathcal A$ in the sense of Veech (see Section 5 in \cite{Veech}) then for almost every  $\la\in\Lambda^{\mathcal A}$ and for every distinct $K,L\in\N$, the automorphisms $T^K_{\pi,\la}$ and $T^L_{\pi,\la}$ are spectrally disjoint. This is a generalization of Theorem B.1 from \cite{CE}. The criterion on spectral disjointness that needs to be used in the case of automorphisms is similar but more elaborate, see Proposition 2 in \cite{ALR}.
\end{remark}

\subsection{Piecewise linear roof functions}
Now we prove the second part of Theorem \ref{Main} where instead of considering piecewise constant roof functions, we consider piecewise linear roof functions with constant non-zero slope. We do not assume however that there is an additional discontinuity point inside some of the exchanged intervals. As one can see this discontinuity was crucial in generating "asymmetry" in the limit measures required to obtain spectral disjointness. In this case this role is handed over to the non-zero slope. 

	Throughout this proof we rely on the notation and results of subsections \ref{sec:rec}, \ref{sec:const} and \ref{sec:rig}. Assume that $(\pi,\la)\in \Upsilon$ and $f$ is piecewise linear function with constant slope $\kappa\neq 0$, linear over intervals exchanged by $T:=T_{\pi,\la}$. In view of the result of the aforementioned subsections, we only need to evaluate of $S_{iq_n}(f)$ on $W_n\cup Z_n \cup U_n$, pick $a_n$ so that $S_{iq_n}(f)-ia_n$ is a bounded sequence on $W_n\cup Z_n\cup U_n$ and finally calculate the limit $\lim_{n\to\infty}(S_{iq_n}(f)-ia_n)_*Leb_{W_n\cup Z_n\cup U_n}$.

\subsubsection{\bf Values of the cocycle.}
Let $a_n=S_{q_n}(f)(0)$. First we evaluate the number $S_{iq_n}(f)(T^m(0))$ for $i=1,\ldots,K$ and $m=0,\ldots,q_n-1$. Recall that
\[
\Delta_n=\la_{\hat\pi_0^{-1}(d)}^{r_n}-\la_{\hat\pi_0^{-1}(d)}^{r_n}.
\]
Since $0\in W_n$, by \eqref{transWn} we have that for every $m=0,\ldots,q_n-1$ and $j=1,\ldots,K$ the following holds
\begin{equation}\label{jak0}
T^{jq_n}(T^m(0))=T^m(0)+j\Delta_n.
\end{equation}
Recall that there are no discontinuities of $T$ in the interior of $W_n$. Thus
\[
S_{q_n}(f)(T^{jq_n}(T^m0))=S_{q_n}(f)(T^m(0))+j\kappa q_n\Delta_n.
\]
On the other hand
\[
\begin{split}
S_{q_n}(f)(T^m(0))&=\sum_{k=m}^{q_n-1}f(T^i(0))+\sum_{k=0}^{m-1}f(T^{q_n+k}(0))\\
&=\sum_{k=m}^{q_n-1}f(T^i(0))+\sum_{k=0}^{m-1}f(T^{k}(0))+m\kappa\Delta_n\\
&=S_{q_n}(f)(0)+m\kappa\Delta_n.
\end{split}
\]
Thus we obtain that 
\begin{equation}\label{value0}
	\begin{split}
	S_{iq_n}(f)(T^m(0))&=\sum_{j=0}^{i-1}S_{q_n}(f)(T^{jq_n}(T^m(0)))\\&=\sum_{j=0}^{i-1}\big(S_{q_n}(f)(T^m(0))+j\kappa q_n\Delta_n \big)\\
	&=\sum_{j=0}^{i-1}\big(S_{q_n}(f)(0)+(jq_n+m)\kappa \Delta_n \big)\\&=iS_{q_n}(f)(0)+\Big(\frac{i(i-1)}{2}q_n+im\Big)\kappa\Delta_n
	\end{split}
	\end{equation}
for $i=1,\ldots,K$ and $m=0,\ldots,q_n-1$.

\textit{Points from $W_n$.}
Assume that $x\in W_n$ that is $x\in T^mJ^n$ for some $m=0,\ldots,q_n-1$. This implies that
\begin{equation}\label{xprzes}
x= T^{m}(0)+T^{-m}(x),
\end{equation}
where $T^{-m}(x)\in[0,|J^n|)$.
Since $x\in W_n$ then for $m=0,\ldots,q_n-1$ the points $T^{kq_n+m}x$ for every $k=0,\ldots,K-1$ belong to the same level of either tower $\{T^M I_{\hat\pi_0^{-1}(1)}^n;\ 0\le M<s_1^n\}$ or $\{T^M I_{\hat\pi_0^{-1}(d)}^n;\ 0\le M<s_d^n\}$. In particular, there are no discontinuities of $T$ between $T^{k+m}(0)$ and $T^k(x)$. Thus by \eqref{value0} and \eqref{xprzes} we get
\begin{equation}
	\begin{split}
	S_{iq_n}&(f)(x)=S_{iq_n}(f)(T^{m}(0))+iq_n\kappa\cdot T^{-m}(x)\\
	&=iS_{q_n}(f)(0)+\Big(\frac{i(i-1)}{2}q_n+im\Big)\kappa\Delta_n+iq_n\kappa\cdot T^{-m}(x)
	\end{split}
	\end{equation}
	for every $x\in W_n$.
	Thus we obtained that for every $m=0,\ldots,q_n-1$ we have
	\[
	\big(S_{iq_n}(f)-iS_{q_n}(f)(0)\big)_*Leb|_{T^{m}J^n}=g_{W_n,m}\,dx,
	\]
	where
	\[
	\displaystyle
	{g_{W_n,m}=\tfrac{1}{iq_n\kappa}\chi_{\left(\frac{i(i-1)}{2}q_n+im\right)\kappa\Delta_n+iq_n\kappa\left[0,|J^n|\right)}.}
	\]
	Recall also that in view of \eqref{1/10} we have that $K\Delta_n<|J^n|$.
	It follows that
		\begin{equation}\label{vNWn}
		\big(S_{iq_n}(f)-iS_{q_n}(f)(0)\big)_*Leb|_{W_n}=g_{W_n}\,dx,
		\end{equation}
		where
	\[
	\begin{split}
	g&_{W_n}=\sum_{m=0}^{q_n-2}\tfrac{m+1}{iq_n\kappa}\chi_{\left(\frac{i(i-1)}{2}q_n+im\right)\kappa\Delta_n+i\kappa[0,\Delta_n)}\\
	&+\tfrac{1}{i\kappa}\chi_{\frac{i(i-1)}{2}q_n\kappa\Delta_n+i\kappa [(q_n-1)\Delta_n,q_n|J^n|)}\\
	&+\sum_{m=1}^{q_n-1}\tfrac{q_n-m}{iq_n\kappa}\chi_{\left(\frac{i(i-1)}{2}q_n+i(m-1)\right)\kappa\Delta_n+iq_n\kappa|J^n|+i\kappa[0,\Delta_n)}.
	\end{split}
	\]

\textit{Points from $Z_n$.} Assume now that $x\in Z_n$, that is 
for some $m=0,\ldots,s_d^n+(n-1)s_1^n-1$ we have
\[
x\in T^m\big[\Sigma_n,\Sigma_n+\Delta_n\big)\subset T^mI^n_{\hat\pi_0^{-1}(d)},
\]
or in other words
\[
x=T^{m+s_1^n}(0)+\tilde x,\ \text{where}\ \tilde x=T^{q_n-s_1^n-m}x-\Delta_n.
\]
Then $\tilde x\in[-\Delta_n,0)$.
By \eqref{Znupper} and \eqref{Zntower}, for $M=0,\ldots,q_n-1$ we have that the points $T^{kq_n+M}x$ for every $k=0,\ldots,K-1$ belong to the same level of either tower $\{T^M I_{\hat\pi_0^{-1}(1)}^n;\ 0\le M<s_1^n\}$ or $\{T^M I_{\hat\pi_0^{-1}(d)}^n;\ 0\le M<s_d^n\}$. 
In particular, there are no discontinuity points of $T$ between $T^jx$ and $T^j(T^{m+s_1^n}(0))$ and by \eqref{transZn} we have $T^j(T^{m+s_1^n}(0))-T^jx=\tilde x$. Thus, by \eqref{value0}, we obtain
\begin{equation}\label{kocZn}
\begin{split}
S_{iq_n}(f)(x)&=S_{iq_n}(f)(T^{m+s_1^n}0)+iq_n\kappa\tilde x\\
&=iS_{q_n}(f)(0)+\Big(\frac{i(i-1)}{2}q_n+i(m+s_1^n)\Big)\kappa\Delta_n+iq_n\kappa\tilde x,
\end{split}
\end{equation}
for every $x\in Z_n$. 	It follows that
\begin{equation}\label{vNZn}
\big(S_{iq_n}(f)-iS_{q_n}(f)(0)\big)_*Leb|_{Z_n}=g_{Z_n}\,dx,
\end{equation}
where
\[
g_{Z_n}=\frac{1}{iq_n\kappa}\sum_{m=0}^{q_n-s_1^n-1}\chi_{\left(\frac{i(i-1)}{2}q_n+i(m+s_1^n)\right)\kappa\Delta_n+iq_n\kappa[-\Delta_n,0)}
\]

\textit{Points from $U_n$.} Suppose now that $x\in U_n$ that is $x\in U_{n,p}$ for some $1\le p\le K$. Then there exists $0\le m<(n-2)s_1^n$ such that 
\begin{equation}\label{vNdefm}
x\in T^m \big[|J^n|+(p-1)\Delta_n,|J^n|+p\Delta_n \big)\subset T^mI_{\hat\pi_0^{-1}(1)}^n,
\end{equation}
and \eqref{xprzes} holds.
In particular $x$ lies on the same level of tower $\{T^M I_{\hat\pi_0^{-1}(1)}^n;\ 0\le M<s_1^n\}$ or $\{T^M I_{\hat\pi_0^{-1}(d)}^n;\ 0\le M<s_d^n\}$ as $T^m(0)$. Moreover, if $k=0,\ldots,K-p-1$ and $M=0,\ldots,q_n-1$ then $T^{kq_n+M}(x)$ belongs to the same level of tower $\{T^M I_{\hat\pi_0^{-1}(1)}^n;\ 0\le M<s_1^n\}$ or $\{T^M I_{\hat\pi_0^{-1}(d)}^n;\ 0\le M<s_d^n\}$ as $T^{kq_n+M+m}(0)$. Thus if $i\le K-p$ then by \eqref{value0} and \eqref{xprzes} we have 
\begin{equation}\label{vNUn1}
	\begin{split}
		S_{iq_n}&(f)(x)=S_{iq_n}(f)(T^{m}(0))+iq_n\kappa\cdot T^{-m}(x)\\
		&=iS_{q_n}(f)(0)+\Big(\frac{i(i-1)}{2}q_n+im\Big)\kappa\Delta_n+iq_n\kappa\cdot T^{-m}(x).
	\end{split}
\end{equation}
As in the proof of \eqref{wn02}, by using \eqref{glue0} and \eqref{glue}, we get that 
\begin{equation}\label{vNUn5}
	\begin{split}
		S_{(K-p+1)q_n}(f)(x)&=S_{(K-p+1)q_n}(f)(T^{m}(0))+(K-p+1)q_n\kappa\cdot T^{-m}(x)\\
		&=(K-p+1)S_{q_n}(f)(0)\\
		&\ +\Big(\frac{(K-p+1)(K-p)}{2}q_n+(K-p+1)m\Big)\kappa\Delta_n\\
		&\ +(K-p+1)q_n\kappa\cdot T^{-m}(x),
	\end{split}
\end{equation}
that is \eqref{vNUn1} is satisfied also for $i=K-p+1$. 
Since by \eqref{pdec} we have $T^{-m}x\in[|J^n|+(p-1)\Delta_n+\de_n,|J^n|+p\Delta_n)$, we in particular obtained that for $p\le K-i+1$ the following holds
		\begin{equation}\label{vNUn0}
			\big(S_{iq_n}(f)-iS_{q_n}(f)(0)\big)_*Leb|_{U_{n,p}}=g_{U_{n,p}}\,dx,
		\end{equation}
		where in view of \eqref{vNdefm} and \eqref{pdec}, $g_{U_{n,p}}$ is defined as follows
\[
g_{U_{n,p}}=\frac{1}{iq_n\kappa}\sum_{m=0}^{q_n-s_1^n-1}\chi_{\left(\frac{i(i-1)}{2}q_n+im\right)\kappa\Delta_n+iq_n\kappa(|J^n|+(p-1)\Delta_n)+iq_n\kappa[\delta_n,\Delta_n)}
\]

Recall that $T^{(K-p+1)q_n}(x)\in Z_n$ (see e.g. \eqref{przeskok2}). More precisely $T^{(K-p+1)q_n}(x)\in T^m[\Sigma_n,\Sigma_n+\Delta_n)$. Moreover in view of \eqref{kocZn} we have
\begin{equation}\label{vNUn2}
		\begin{split}
			&S_{(i-(K-p+1))q_n}(f)(T^{(K-p+1)q_n}(x))-(i-(K-p+1))S_{q_n}(f)(0)\\& =\!\Big(\!\frac{(i-(K-p+1))(i-(K-p+2))}{2}q_n\!+\!(i-(K-p+1))(m+s_1^n)\!\Big)\kappa\Delta_n\\ &\quad +(i-(K-p+1))q_n\kappa\tilde x,
		\end{split}
	\end{equation}
where $\tilde x\in[-\Delta_n,0)$ is such that 
\begin{equation}\label{vNUn3}
T^{(K-p+1)q_n}(x)=T^{m+s_1^n}(0)+\tilde x.
\end{equation}
Note that in view of \eqref{transUn1} and \eqref{przeskok} we have
\[
T^{(K-p+1)q_n}(T^{-m}x)=T^{-m}x+(K-p+1)\Delta_n.
\]
Thus by \eqref{vNUn3} we have
\[
\tilde x=T^{(K-p+1)q_n}(T^{-m}x)-T^{s_1^n}(0)=T^{-m}x+(K-p+1)\Delta_n-\Sigma_n\Delta_n=T^{-m}x+(K-p)\Delta_n-\Sigma_n.
\] 
Hence by \eqref{vNUn2} we get
\begin{equation}\label{vNUn4}
	\begin{split}
		&S_{(i-(K-p+1))q_n}(f)(T^{(K-p+1)q_n}(x))-(i-(K-p+1))S_{q_n}(f)(0)\\& =\!\Big(\!\frac{(i-(K-p+1))(i-(K-p+2))}{2}q_n\!+\!(i-(K-p+1))(m+s_1^n)\!\Big)\kappa\Delta_n\\ &\quad +(i-(K-p+1))q_n\kappa\big(T^{-m}x+(K-p)\Delta_n-\Sigma_n\big).
	\end{split}
\end{equation}
By combining \eqref{vNUn5} with \eqref{vNUn4} we get that if $p>K-i+1$ and $x\in U_{n,p}$ then
\[
\begin{split}
S_{iq_n}(f)(x)&-iS_{q_n}(f)(0)=(S_{(K-p+1)q_n}(f)(x)-(K-p+1)S_{q_n}(f)(0))\\
&\quad +(S_{(i-(K-p+1))q_n}(f)(T^{(K-p+1)q_n}(x))-(i-(K-p+1))S_{q_n}(f)(0))\\
&=\big(\frac{1}{2}i(i-1)q_n-(i-(K-p+1))q_n+im)\big)\kappa\Delta_n\\
&\quad +iq_n\kappa T^{-m}x-
(i-(K-p+1))q_n\Sigma_n+(i-(K-p+1))s_1^n\kappa\Delta_n.
\end{split}
\]
By \eqref{pdec} we have that 
\[
T^{-m}(x)\in\big[|J^n|+(p-1)\Delta_n+\delta_n,|J^n|+p\Delta_n \big).
\]
Hence we have that for $p>K-i+1$ the following holds
\begin{equation}\label{vNUn}
	\big(S_{iq_n}(f)-iS_{q_n}(f)(0)\big)_*Leb|_{U_{n,p}}=g_{U_{n,p}}\,dx,
\end{equation}
where
\[
g_{U_{n,p}}=\frac{1}{iq_n\kappa}\sum_{m=0}^{q_n-1}\chi_{C_{n,p,m}},
\]
and 
\[
\begin{split}
C_{n,p,m}&=\big(\frac{1}{2}i(i-1)q_n-(i-(K-p+1))q_n+im)\big)\kappa\Delta_n\\
&\quad +iq_n\kappa(|J^n|+(p-1)\Delta_n)-
(i-(K-p+1))q_n\Sigma_n\\
&\quad+(i-(K-p+1))s_1^n\kappa\Delta_n+iq_n\kappa[\delta_n,\Delta_n)
%\Big[\Big(\frac{i(i-1)}{2}q_n+im\Big)\kappa\Delta_n+iq_n\kappa(|J^n|+(p-1)\Delta_n)\\
%&\ -s_1^n\kappa\Delta_n\ -(i-(K-p+1))q_n\kappa\Delta_n\\
%&\ -(i-(K-p+1))q_n\kappa\sum_{b\neq\hat\pi_0^{-1}(d)}\la^{r_n}_{b}+iq_n\kappa[,\\
%&\ \Big(\frac{i(i-1)}{2}q_n+im\Big)\kappa\Delta_n+iq_n\kappa(|J^n|+p\Delta_n)\\
%&\ -s_1^n\kappa\Delta_n\ -(i-(K-p+1))q_n\kappa\Delta_n\\
%&\ -(i-(K-p+1))q_n\kappa\sum_{b\neq\hat\pi_0^{-1}(d)}\la^{r_n}_{b}
% \Big),]
\end{split}
\]
for $m=0,\ldots,q_n-1$.

\subsubsection{\bf The density of limit measure.} We now use the results from the previous subsection to evaluate the limit $P_i=\lim_{n\to\infty}(S_{iq_n}(f)-iS_{q_n}(f)(0))_*Leb_{W_n\cup U_n\cup Z_n}$. First note that in view of \eqref{wnunzn} we have
\[
\begin{split}
\lim_{n\to\infty}&(S_{iq_n}(f)-iS_{q_n}(f)(0))_*Leb_{W_n\cup U_n\cup Z_n}\\
&=\lim_{n\to\infty}\big((S_{iq_n}(f)-iS_{q_n}(f)(0))_*Leb|_{W_n}+(S_{iq_n}(f)-iS_{q_n}(f)(0))_*Leb|_{Z_n}\\
&\ +\sum_{p=1}^K(S_{iq_n}(f)-iS_{q_n}(f)(0))_*Leb|_{U_{n,p}}\big).
\end{split}
\]
Recall that $q_n=s_d^n+ns_1^n$.  Thus by \eqref{qnlan} and \eqref{pdec} we have that
\[
\lim_{n\to\infty}Leb(U_{n,p})=\lim_{n\to\infty}q_n\Delta_n=\lim_{n\to\infty}(q_n-s_1^n)\Delta_n=\lim_{n\to\infty}Leb(Z_n)=\g.
\]
By the above convergence, \eqref{qnlan} and by \eqref{wnunzn} we also obtain
\[
\lim_{n\to\infty}q_n|J^n|=\lim_{n\to\infty}Leb(W_n)=1-\lim_{n\to\infty}Leb(\bigcup_{p=1}^KU_{n,p}\cup Z_n)=1-(K+1)\g.
\]
Finally in view of \eqref{miaraXn} we also get 
\[
\lim_{n\to\infty}q_n\Sigma_n=\lim_{n\to\infty}(W_n\cup\bigcup_{p=1}^KU_{n,p})=1-\g
\]
and by \eqref{pdec} and \eqref{qnlan} we have
\[
\lim_{n\to\infty}q_n\delta_n=0.
\]
We use the above convergences to compute the density of $P_i$.

In view of \eqref{vNWn} we have 
\begin{equation}\label{summ1}
\lim_{n\to \infty}\big((S_{iq_n}(f)-iS_{q_n}(f)(0))\big)_*Leb|_{W_n}=g_W\,dx,
\end{equation}
where
\[
\begin{split}
g&_W(x)=\frac{1}{i\kappa}\Big(\frac{1}{i\kappa\g}x-\frac{i-1}{2}\Big)\chi_{\frac{i(i-1)}{2}\kappa\g+i\kappa[0,\g)}(x)\\
&\ + \frac{1}{i\kappa}\chi_{\frac{i(i-1)}{2}\kappa\g+i\kappa[\g,1-(K+1)\g)}(x)\\
&\ + \frac{1}{i\kappa}\Big(-\frac{1}{i\kappa\g}x+\frac{i-1}{2}+\frac{1}{\g} -K\Big)\chi_{\frac{i(i-1)}{2}\kappa\g+i\kappa(1-(K+1)\g)+i\kappa[0,\g)}(x).
\end{split}
\]
Secondly, in view of \eqref{vNZn}, we obtain
\begin{equation}\label{summ2}
\lim_{n\to \infty}\big((S_{iq_n}(f)-iS_{q_n}(f)(0))\big)_*Leb|_{Z_n}=g_Z\,dx,
\end{equation}
where
\[
\begin{split}
g_Z&=\frac{1}{i\kappa}\Big(\frac{1}{i\kappa\g}x-\frac{i-1}{2}+1\Big)\chi_{\frac{i(i-1)}{2}\kappa\g+i\kappa[-\g,0)}(x)\\
&\ +\frac{1}{i\kappa}\Big(-\frac{1}{i\kappa\g}x+\frac{i-1}{2}+1\Big)\chi_{\frac{i(i-1)}{2}\kappa\g+i\kappa[0,\g)}(x).
\end{split}
\]
Finally, we obtain that for $p=1,\ldots,K$ we have that
\begin{equation}\label{summ3}
\lim_{n\to \infty}\big((S_{iq_n}(f)-iS_{q_n}(f)(0))\big)_*Leb|_{U_{n,p}}=g_{U,p}\,dx,
\end{equation}
where the formula for $g_{U,p}$ depends on the relation of $p$ and $i$. More precisely for $p\le K-i+1$ in view of \eqref{vNUn0} we have
\[
\begin{split}
g_{U,p}&=\frac{1}{i\kappa}\Big(\frac{1}{i\kappa\g}x-\frac{i-1}{2}-\frac{1}{\g} +K-p+2\Big) \cdot\chi_{\frac{i(i-1)}{2}\kappa\g+i\kappa(1-(K+1)\g)+ip\kappa\g+i\kappa[-\g,0)}(x)\\
&\ +\frac{1}{i\kappa}\Big(-\frac{1}{i\kappa\g}x+\frac{i-1}{2}+\frac{1}{\g} -K+p\Big)\cdot\chi_{\frac{i(i-1)}{2}\kappa\g+i\kappa(1-(K+1)\g)+ip\kappa\g+i\kappa[0,\g)}(x).
\end{split}
\]
On the other hand, if $i> 1$ then for $p> K-i+1$ in view of \eqref{vNUn} we have
\[
\begin{split}
g&_{U,p}=\frac{1}{i\kappa}\Big(\frac{1}{i\kappa\g}x-\frac{i-1}{2}-\frac{1}{\g} +K-p+2+\frac{p-(K-i+1)}{i\g}\Big)\\
&\cdot\chi_{\frac{i(i-1)}{2}\kappa\g+i\kappa(1-(K+1)\g)+ip\kappa\g-(p-(K-i+1))\kappa+i\kappa[-\g,0)}(x)\\
&+\frac{1}{i\kappa}\Big(-\frac{1}{i\kappa\g}x+\frac{i-1}{2}+\frac{1}{\g} -K+p-\frac{p-(K-i+1)}{i\g}\Big)\\
&\cdot\chi_{\frac{i(i-1)}{2}\kappa\g+i\kappa(1-(K+1)\g)+ip\kappa\g-(p-(K-i+1))\kappa+i\kappa[0,\g)}(x).
\end{split}
\]
	\begin{figure}
		\includegraphics[scale=1.5]{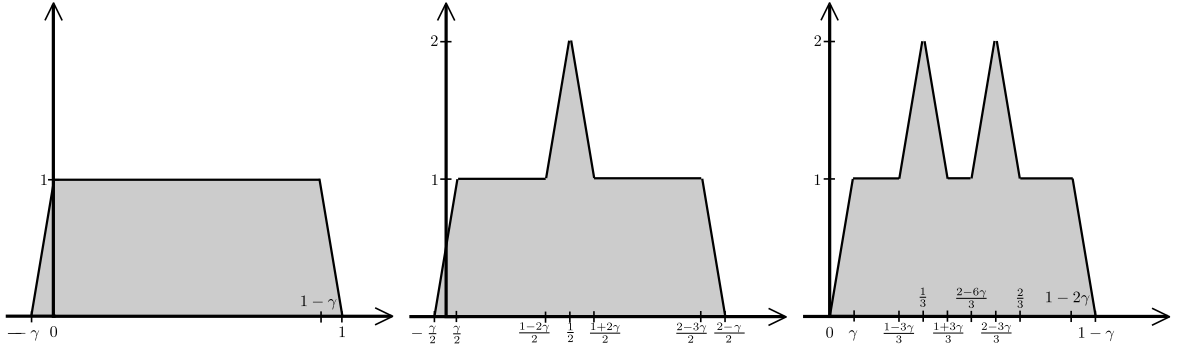}
		$\quad\quad\quad\quad$
		\caption{Exemplary densities of measures $P_1$, $Res_2(P_2)$ and $Res_3(P_3)$, from left to right, for $\kappa=1$. }
	\end{figure}
As a consequence, by \eqref{summ1}, \eqref{summ2} and \eqref{summ3} we obtain that for $i=1,\ldots,K$ we have
\[
\lim_{n\to\infty}(S_{iq_n}(f)-iS_{q_n}(f)(0))_*Leb_{W_n\cup U_n\cup Z_n}=P_i=G_i\,dx,
\]
where
\begin{equation}\label{vN2}
\begin{split}
G_1(x)&=\frac{1}{\kappa}\Big(\frac{1}{\kappa\g}x+1\Big)\chi_{[-\kappa\g,0)}(x)+\frac{1}{\kappa}\chi_{[0,\kappa(1-\g))}(x)\\&\quad+\frac{1}{\kappa}\Big(-\frac{1}{\kappa\g}x+\frac{1}{\g}\Big)
\chi_{[\kappa(1-\g),\kappa)}(x),
\end{split}
\end{equation}
whereas for $i>1$ we have
\begin{equation*}
\begin{split}
G_i(x)&=\frac{1}{i\kappa}\Big(\frac{1}{i\kappa\g}x-\frac{i-1}{2}+1\Big)\chi_{\frac{i(i-1)}{2}\kappa\g+i\kappa[-\g,0)}(x)\\
&\ +\frac{1}{i\kappa}\chi_{\frac{i(i-1)}{2}\kappa\g+i\kappa[0,1-i\g)}\\
&\ +\frac{1}{i\kappa}\Big(-\frac{1}{i\kappa\g}x-\frac{i+1}{2}+\frac{1}{\g} +1\Big)
\chi_{\frac{i(i-1)}{2}\kappa\g+i\kappa[1-i\g,1-(i-1)\g)}(x)\\
&\ +\Big(\sum_{p=1}^{i-1} \frac{1}{i\kappa}\Big(\frac{1}{i\kappa\g}x-\frac{i-1}{2} +p+1-\frac{p}{i\g}\Big)\\
&\ \cdot\chi_{\frac{i(i-1)}{2}\kappa\g+p\kappa-ip\kappa\g+i\kappa[-\g,0)}(x)\\
&\ +\frac{1}{i\kappa}\Big(-\frac{1}{i\kappa\g}x+\frac{i-1}{2} -(p-1)+\frac{p}{i\g}\Big)\\
&\ \cdot\chi_{\frac{i(i-1)}{2}\kappa\g+p\kappa-ip\kappa\g+i\kappa[0,\g)}(x)\Big).
\end{split}
\end{equation*}
Then $Res_i(P_i)$ is an absolutely continuous measure with a density
\begin{equation}\label{vN1}
\begin{split}
	\tilde G_i(x)&=\frac{1}{\kappa}\Big(\frac{1}{\kappa\g}x-\frac{i-1}{2}+1\Big)\chi_{\frac{i-1}{2}\kappa\g+\kappa[-\g,0)}(x)\\
	&\ +\frac{1}{\kappa}\chi_{\frac{i-1}{2}\kappa\g+\kappa[0,1-i\g)}\\
	&\ +\frac{1}{\kappa}\Big(-\frac{1}{\kappa\g}x-\frac{i+1}{2}+\frac{1}{\g} +1\Big)
	\chi_{\frac{i-1}{2}\kappa\g+\kappa[1-i\g,1-(i-1)\g)}(x)\\
	&\ +\Big(\sum_{p=1}^{i-1} \frac{1}{\kappa}\Big(\frac{1}{\kappa\g}x-\frac{i-1}{2} +p+1-\frac{p}{i\g}\Big)\\
	&\ \cdot\chi_{\frac{i-1}{2}\kappa\g+\frac{p}{i}\kappa-p\kappa\g+\kappa[-\g,0)}(x)\\
	&\ +\frac{1}{\kappa}\Big(-\frac{1}{\kappa\g}x+\frac{i-1}{2} -(p-1)+\frac{p}{i\g}\Big)\\
	&\ \cdot\chi_{\frac{i-1}{2}\kappa\g+\frac{p}{i}\kappa-p\kappa\g+\kappa[0,\g)}(x)\Big).
\end{split}
\end{equation}
It is worth to mention that the shape of the graph of the above density which comes from the points belonging to $W_n$, $Z_n$ and $U_{n,p}$ for $p\le K-i+1$ is a trapezoid of height $\frac{1}{\kappa}$ while the impact of the points from $\bigcup_{p=K-i+2}^{K}U_{n,p}$ comes in the form of triangles also of height $\frac{1}{\kappa}$. The position of those triangles in the graph of $\tilde G_i$ is determined by the term $\frac{p}{i}\kappa$ in the sum which appears in the formula for the aforementioned density. Moreover the shape of the graph of $\tilde G_1:=G_1$ is just a trapezoid of height $\frac{1}{\kappa}$. In particular, the number of points for which the value $\frac{2}{\kappa}$ is admitted by $\tilde G_i$ (which are the maximums of this function for $i>1$) is equal $(i-1)$. Since $K<L$, then it follows that $\tilde G_K\neq\tilde G_L$ and thus $Res_K(P_K)\neq Res_L(P_L)$. Hence Corollary \eqref{cor:maintool} implies that flows $\{T_{Kt}^{f}\}_{t\in\R}$ and $\{T_{Lt}^{f}\}_{t\in\R}$ are spectrally disjoint.

\end{proof}

\end{document}